\numberwithin{equation}{section}
\newtheorem{theorem}{Theorem}[section]
\newtheorem{lemma}[theorem]{Lemma}
\newtheorem{proposition}[theorem]{Proposition}
\newtheorem{corollary}[theorem]{Corollary}
\theoremstyle{definition}
\newtheorem{definition}[theorem]{Definition}
\newtheorem{example}[theorem]{Example}
\newcommand{\R}{{\mathbb R}}
\newcommand{\N}{{\mathbb N}}
\newcommand{\mb}[1]{\mathbf{#1}}
\newcommand{\fm}[2]{#1\!:\!#2}
\begin{document}

\title{Completion procedures in measure theory}

\author{A.G.~Smirnov}
\address{I.~E.~Tamm Theory Department, P.~N.~Lebedev
	Physical Institute, Leninsky prospect 53, Moscow 119991, Russia}
\email{smirnov@lpi.ru}	

\author{M.S.~Smirnov}
\address{Marchuk Institute of Numerical Mathematics RAS, Gubkin str. 8, Moscow 119333, Russia}
\email{matsmir98@gmail.com}	

\keywords{Group-valued measure, group-valued content, completion}
\subjclass[2010]{28A12, 28B10}

\begin{abstract}
	We propose a unified treatment of extensions of group-valued contents (i.e., additive set functions defined on a ring) by means of adding new null sets. Our approach is based on the notion of a completion ring for a content $\mu$. With every such ring $\mathcal N$, an extension of $\mu$ is naturally associated which is called the $\mathcal N$-completion of $\mu$. The $\mathcal N$-completion operation comprises most previously known completion-type procedures and also gives rise to some new extensions, which may be useful for constructing counterexamples in measure theory. We find a condition ensuring that $\sigma$-additivity of a content is preserved under the $\mathcal N$-completion and establish a criterion for the $\mathcal N$-completion of a measure to be again a measure. 
\end{abstract}

\maketitle
	
\section{Introduction}
	
	Let $\mathfrak A$ be an Abelian group. We say that a map $\mu$ is an $\mathfrak A$-valued content if it is an $\mathfrak A$-valued additive set function and its domain of definition $D_\mu$ is a ring of sets. If $\mathfrak A$ is a Hausdorff topological Abelian group, then a $\sigma$-additive $\mathfrak A$-valued content is called an $\mathfrak A$-valued $\sigma$-content. An $\mathfrak A$-valued $\sigma$-content $\mu$ is said to be an $\mathfrak A$-valued measure if $D_\mu$ is a $\delta$-ring.  
	
	Given a content $\mu$, a set $N\in D_\mu$ is called a $\mu$-null set if $\mu(N') = 0$ for every $N'\in D_\mu$ such that $N'\subset N$. The set of all $\mu$-null sets is denoted by $\mathcal N_\mu$. We define
	\begin{align}
	& \hat{\mathcal N}_\mu = \{N: N\subset N'\mbox{ for some } N'\in \mathcal N_\mu\},\nonumber \\
	&\mathcal N_0[\mu] = \{N\subset S[\mu]: N\cap B\in \mathcal N_\mu\mbox{ for every} B\in D_\mu\},\label{N0mu}\\
	&\mathcal N[\mu] = \{N\subset S[\mu]: N\cap B\in \hat{\mathcal N}_\mu\mbox{ for every} B\in D_\mu\},\label{Nmu}
	\end{align}
	where $S[\mu] = \bigcup D_\mu$. We call the elements of $\hat{\mathcal N}_\mu$, $\mathcal N_0[\mu]$, and $\mathcal N[\mu]$ $\mu$-negligible, locally $\mu$-null, and locally $\mu$-negligible sets respectively. Every content $\mu$ can be naturally extended to contents that have $\hat{\mathcal N}_\mu$, $\mathcal N_0[\mu]$, and $\mathcal N[\mu]$ as their sets of null sets. We denote these extensions by $\mathfrak C\mu$, $\mathfrak S\mu$, and $\overline{\mathfrak C}\mu$ and call them the completion, the saturation, and the essential completion of $\mu$ respectively. 
	
	The sets $\hat{\mathcal N}_\mu$, $\mathcal N_0[\mu]$, and $\mathcal N[\mu]$, as well as the extensions $\mathfrak C\mu$, $\mathfrak S\mu$, and $\overline{\mathfrak C}\mu$, occur in various contexts in measure theory.\footnote{In many expositions (see, e.g., \cite{Dinculeanu,Halmos}), measure-theoretic constructions involve an arbitrary set $X\supset S[\mu]$. In this case, $S[\mu]$ should be replaced with $X$ in~(\ref{N0mu}) and~(\ref{Nmu}). In this paper, we confine ourselves to the case $X = S[\mu]$, which seems to be sufficient for all practical purposes.} The completion $\mathfrak C\mu$ of a positive measure is a standard measure-theoretic construction (see, e.g., Halmos~\cite[Sec.~13]{Halmos}). A discussion of $\mathfrak C\mu$ (under the name of null-completion) for vector-valued contents can be found in~\cite{Butkovic}. The null sets defined by Segal~\cite{Segal} for a positive measure $\mu$ coincide with the elements of $\mathcal N_0[\mu]$ and the extension procedure given in~\cite[Definition~2.2]{Segal} is closely related to $\mathfrak S\mu$. The same definition of null sets is adopted by Masani and Niemi~\cite{MasaniNiemi1,MasaniNiemi2} for vector measures. The completion of a content $\mu$ described by Dinculeanu~\cite[Sec.~I.5.2]{Dinculeanu} is actually equal to $\mathfrak C\mathfrak S\mu$. At the same time, if $\mu$ is a positive measure, then $\mu$-negligible sets defined in~\cite[Sec.~I.5.4]{Dinculeanu} are precisely the elements of $\mathcal N[\mu]$ and thus correspond to $\overline{\mathfrak C}\mu$. Similarly, the treatments of scalar Radon measures by Bourbaki~\cite{BourbakiIntegration} and vector measures in general topological vector spaces by Thomas~\cite{Thomas2012} involve elements of $\mathcal N[\mu]$, where they are called locally negligible sets and nul sets respectively. In this connection, the question naturally arises whether Dinculeanu's completion $\mathfrak C\mathfrak S\mu$ is always equal to the essential completion~$\overline{\mathfrak C}\mu$.     
	
	In this paper, we propose a unified approach to various completion procedures based on the notion of a \textit{completion ring}. Given an $\mathfrak A$-valued content $\mu$, every completion ring $\mathcal N$ for $\mu$ gives rise to a naturally defined $\mathfrak A$-valued content that extends $\mu$ and has $\mathcal N$ as its set of null sets. We call this content the $\mathcal N$-completion of $\mu$ and denote it by $\mathfrak C_{\mathcal N}\mu$. The sets $\hat{\mathcal N}_\mu$, $\mathcal N_0[\mu]$, and $\mathcal N[\mu]$ turn out to be completion rings for $\mu$ and $\mathfrak C\mu$, $\mathfrak S\mu$, and $\overline{\mathfrak C}\mu$ are precisely the corresponding $\mathcal N$-completions of $\mu$. Apart from these ``natural'' completion rings, there are many other ones and, in general, $\mathcal N$-completions may have rather exotic properties. In particular, the well-known counterexample to the Radon--Nikodym theorem given by Halmos~\cite[p.~131]{Halmos} can be conveniently expressed in terms of completions with respect to suitable completion rings. We use an $\mathcal N$-completion of this kind to produce a positive measure $\mu$ such that $\mathfrak C\mathfrak S\mu\neq \overline{\mathfrak C}\mu$, thus answering the above question in the negative.
	
	Elementary examples show that, in general, the $\mathcal N$-completion of a measure can be neither $\sigma$-additive nor defined on a $\delta$-ring. We find simple conditions ensuring that the $\mathcal N$-completions of $\sigma$-contents and measures are again $\sigma$-contents and measures (the condition for measures is actually necessary and sufficient). In particular, if $\mu$ is a $\sigma$-content (a measure), then each of the natural completions $\mathfrak C\mu$, $\mathfrak S\mu$, and $\overline{\mathfrak{C}}\mu$ is also a $\sigma$-content (resp., a measure).
	
	It is worth noting that the $\mathcal N$-completions can be defined without appealing to the additive structure of $\mathfrak A$. This requires a slight modification of the definition of null sets (as defined above, they depend on $\mathfrak A$ via its neutral element). Such a modification allows us to introduce the notion of \textit{a map admitting completion} that does not depend on the group $\mathfrak A$. It turns out that all $\mathfrak A$-valued contents admit completion and the basic properties of $\mathcal N$-completions can be derived for maps admitting completion in place of contents.
	
	It is important that most integration theories actually involve some completion procedure. Indeed, to every integral, there corresponds an extension of the initial measure to the $\delta$-ring consisting of sets whose characteristic functions are integrable. This extension is usually given by one of the natural completion procedures. In particular, the integrals constructed in~\cite{MasaniNiemi1,MasaniNiemi2} and~\cite{Dinculeanu,Thomas2012} involve saturation and essential completion respectively. In a forthcoming paper, we shall employ both these procedures to generalize Dinculeanu's bilinear integral~\cite{Dinculeanu} for the case of arbitrary locally convex spaces. 
	
	The paper is organized as follows. In Section~\ref{s2}, we fix our notation and give basic definitions related to rings of sets, contents, and measures. In Section~\ref{s3}, we define completion rings and $\mathcal N$-completions and prove that $\mathcal N$ is precisely the set of all null sets of $\mathfrak C_{\mathcal N}\mu$. We also establish the following transitivity property: if $\mathcal N$ and $\mathcal N'$ are completion rings for $\mu$ and $\mathcal N\subset \mathcal N'$, then $\mathfrak C_{\mathcal N'}\mathfrak C_{\mathcal N}\mu = \mathfrak C_{\mathcal N'}\mu$. In Section~\ref{s4}, we consider the natural completions $\mathfrak C\mu$, $\mathfrak S\mu$, and $\overline{\mathfrak C}\mu$ and find all identities between compositions of these operations. All results of Sections~\ref{s3} and~\ref{s4} are formulated in terms of maps admitting completion and do not depend on the group $\mathfrak A$. In Section~\ref{s5}, we show that $\mathcal N$-completions of contents are contents and give a sufficient condition for an $\mathcal N$-completion of a $\sigma$-content to be a $\sigma$-content. We also discuss the relation between $\mathcal N$-completions and topologies induced by contents on their domains. In Section~\ref{s6}, we establish a criterion for an $\mathcal N$-completion of a measure to be a measure. In Section~\ref{s7}, we give some examples of $\mathcal N$-completions. In particular, we construct a positive measure such that $\mathfrak C\mathfrak S\mu\neq \overline{\mathfrak C}\mu$ (Example~\ref{e1}) and rederive the Halmos counterexample to the Radon--Nikodym theorem in terms of suitable $\mathcal N$-completions (Example~\ref{e2}). We also provide some evidence showing that our general results apparently cannot be considerably strengthened (Examples~\ref{e3}, \ref{e4}, and~\ref{e5}).
	
\section{Preliminaries}
\label{s2}
	
	We let $\R$ and $\N$ denote the set of real numbers and the set of strictly positive integer numbers respectively. A set $A$ is called countable if there is a bijection between $A$ and a subset of $\N$ (thus, finite sets are countable). Given a map $f$, we let $D_f$ and $R_f$ denote the domain of definition and the range of $f$ respectively. We write $\fm{f}{I}$ if $f$ is a map and $D_f = I$. The words ``map'' and ``family'' are used interchangeably. A family $\fm{f}{I}$ is called disjoint if $f(i)\cap f(j) = \varnothing$ for every $i,j\in I$ such that $i\neq j$. We say that a family $\fm{f}{I}$ is a partition of a set $A$ if it is disjoint and $A = \bigcup_{i\in I}f(i)$. A family $\fm{f}{I}$ is called finite (countable) if $I$ is finite (resp., countable).
	
	A nonempty set $\mathcal Q$ is called a ring if $A\cup B\in\mathcal Q$ and $A\setminus B\in\mathcal Q$ for every $A,B\in\mathcal Q$.
	A ring $\mathcal Q$ is called a $\delta$-ring ($\sigma$-ring) if the intersection (resp., union) of every nonempty countable family of elements of $\mathcal Q$ belongs to $\mathcal Q$. A ring ($\sigma$-ring) that has the greatest element with respect to inclusion is called an algebra (resp., $\sigma$-algebra). Every $\sigma$-ring is a $\delta$-ring. For every set $\mathcal K$, there is a unique ring ($\sigma$-ring, $\delta$-ring) $\mathcal Q\supset \mathcal K$ such that $\mathcal Q\subset \mathcal Q'$ for every ring (resp., $\sigma$-ring, $\delta$-ring) $\mathcal Q'\supset\mathcal K$. It is called the ring (resp., $\sigma$-ring, $\delta$-ring) generated by the set $\mathcal K$ and will be denoted by $\kappa(\mathcal K)$ (resp., $\sigma(\mathcal K)$, $\delta(\mathcal K)$).

	\begin{lemma}\label{l_NcapB}
		Let $\mathcal Q$ and $\mathcal N$ be sets such that $N\cap B\in\mathcal N$ for every $B\in\mathcal Q$ and $N\in\mathcal N$. If $B\in\sigma(\mathcal Q)$ and $N\in\sigma(\mathcal N)$, then $N\cap B\in\sigma(\mathcal N)$. If $B\in \sigma(\mathcal Q)$ and $B\subset N$ for some $N\in\sigma(\mathcal N)$, then $B\in \sigma(\mathcal N)$. 
	\end{lemma}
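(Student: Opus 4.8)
The plan is to use the standard ``good sets'' principle, applied twice in succession. The difficulty is that the hypothesis $N\cap B\in\mathcal N$ holds only for $B$ and $N$ drawn from the \emph{generating} families $\mathcal Q$ and $\mathcal N$, whereas the conclusion is wanted for $B\in\sigma(\mathcal Q)$ and $N\in\sigma(\mathcal N)$. One cannot enlarge both arguments simultaneously; instead I would bootstrap, first enlarging $B$ to $\sigma(\mathcal Q)$ while keeping $N\in\mathcal N$ fixed, and then enlarging $N$ to $\sigma(\mathcal N)$ while keeping $B\in\sigma(\mathcal Q)$ fixed.

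For the first step, fix $N\in\mathcal N$ and consider the family $\mathcal F_N = \{B : N\cap B\in\sigma(\mathcal N)\}$. By hypothesis $\mathcal Q\subset\mathcal F_N$, and $\varnothing\in\mathcal F_N$ since $N\cap\varnothing=\varnothing\in\sigma(\mathcal N)$, so $\mathcal F_N$ is nonempty. Using the distributive identities $N\cap(B_1\cup B_2)=(N\cap B_1)\cup(N\cap B_2)$, $N\cap(B_1\setminus B_2)=(N\cap B_1)\setminus(N\cap B_2)$, and $N\cap\bigcup_i B_i=\bigcup_i(N\cap B_i)$, together with the fact that $\sigma(\mathcal N)$ is closed under unions, differences, and countable unions, one checks directly that $\mathcal F_N$ is a $\sigma$-ring. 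Hence $\sigma(\mathcal Q)\subset\mathcal F_N$; that is, $N\cap B\in\sigma(\mathcal N)$ for every $N\in\mathcal N$ and every $B\in\sigma(\mathcal Q)$.

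For the second step, fix $B\in\sigma(\mathcal Q)$ and set $\mathcal G_B = \{N : N\cap B\in\sigma(\mathcal N)\}$. The first step gives $\mathcal N\subset\mathcal G_B$, and the same distributive identities (now with the roles of the two arguments exchanged) show that $\mathcal G_B$ is again a $\sigma$-ring. Therefore $\sigma(\mathcal N)\subset\mathcal G_B$, which is exactly the first assertion: $N\cap B\in\sigma(\mathcal N)$ for all $B\in\sigma(\mathcal Q)$ and $N\in\sigma(\mathcal N)$. The second assertion then follows at once: if $B\in\sigma(\mathcal Q)$ and $B\subset N$ with $N\in\sigma(\mathcal N)$, then $B=N\cap B\in\sigma(\mathcal N)$ by what was just proved.

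The only genuine obstacle is the bookkeeping in the bootstrapping: one must resist replacing both $\mathcal Q$ and $\mathcal N$ by their generated $\sigma$-rings at once, and instead verify the $\sigma$-ring closure of the good-sets family separately at each of the two stages. The verification itself is routine once the correct family is chosen.
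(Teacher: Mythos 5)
Your proof is correct and is essentially the paper's argument: the same two-pass good-sets bootstrap, only with the passes in the opposite order (the paper first fixes $B\in\mathcal Q$ and enlarges $N$ to $\sigma(\mathcal N)$ via $\mathcal K=\{N\in\sigma(\mathcal N):N\cap B\in\sigma(\mathcal N)\}$, then fixes $N\in\sigma(\mathcal N)$ and enlarges $B$ to $\sigma(\mathcal Q)$, and the closure verifications are the same routine distributivity checks you perform). One pedantic remark: to keep your good-sets families honest sets rather than classes, define them as subfamilies of the relevant generated $\sigma$-ring, e.g.\ $\mathcal F_N=\{B\in\sigma(\mathcal Q):N\cap B\in\sigma(\mathcal N)\}$, exactly as the paper does with its $\mathcal K$ and $\mathcal R$.
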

	\begin{proof}
		Let $B\in\mathcal Q$ and $\mathcal K = \{N\in\sigma(\mathcal N): N\cap B\in \sigma(\mathcal N)\}$. Since $\mathcal N\subset \mathcal K$ and $\mathcal K$ is a $\sigma$-ring, we have $\mathcal K = \sigma(\mathcal N)$. This proves that $N\cap B\in\sigma(\mathcal N)$ for every  $B\in\mathcal Q$ and $N\in\sigma(\mathcal N)$. Let $N\in\sigma(\mathcal N)$ and $\mathcal R = \{B\in\sigma(\mathcal Q): N\cap B\in\sigma(\mathcal N)\}$. Since $\mathcal Q\subset \mathcal R$ and $\mathcal R$ is a $\sigma$-ring, we have $\mathcal R = \sigma(\mathcal Q)$. Thus, $N\cap B\in\sigma(\mathcal N)$ for every $B\in\sigma(\mathcal Q)$ and $N\in\sigma(\mathcal N)$.  
		If $B\in \sigma(\mathcal Q)$, $N\in\sigma(\mathcal N)$, and $B\subset N$, then $B\in\sigma(\mathcal N)$ because $B = N\cap B$. 
	\end{proof}
	
	The proof of the next elementary statement can be found in Sec.~I.1.3 of~\cite{Dinculeanu}.
	
	\begin{lemma}\label{ll1}
		Let $\mathcal Q$ be a $\delta$-ring. Then the following statements hold:
		\begin{itemize}
			\item [(i)] Let $B\in\mathcal Q$ and $\mb A\colon I\to \mathcal Q$ be a countable family such that $\mb A(i)\subset B$ for all $i\in I$. Then $\bigcup_{i\in I} \mb A(i) \in \mathcal Q$.
			\item [(ii)] $\sigma(\mathcal Q)$ is the set of all countable unions of elements of $\mathcal Q$.
			\item[(iii)] If $A\in \sigma(\mathcal Q)$ and $A\subset B$ for some $B\in \mathcal Q$, then $A\in \mathcal Q$.
		\end{itemize}
	\end{lemma}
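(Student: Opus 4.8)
The plan is to prove the three statements in the order (i), (ii), (iii), deriving the last one from the first two.

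For (i), I would exploit that $\mathcal Q$ is closed both under the ring operations and under countable intersections of nonempty families. Since $\mathbf A(i)\subset B$ and $\mathcal Q$ is a ring, each set $B\setminus\mathbf A(i)$ belongs to $\mathcal Q$. If $I\neq\varnothing$, the family $\{B\setminus\mathbf A(i)\}_{i\in I}$ is a nonempty countable subfamily of $\mathcal Q$, so its intersection $\bigcap_{i\in I}(B\setminus\mathbf A(i)) = B\setminus\bigcup_{i\in I}\mathbf A(i)$ lies in $\mathcal Q$ by the $\delta$-ring property; consequently $\bigcup_{i\in I}\mathbf A(i) = B\setminus\bigl(B\setminus\bigcup_{i\in I}\mathbf A(i)\bigr)$ is again in $\mathcal Q$. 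The case $I=\varnothing$ is trivial, since $\varnothing = A\setminus A\in\mathcal Q$ for any $A\in\mathcal Q$.

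For (ii), let $\mathcal U$ denote the collection of all countable unions of elements of $\mathcal Q$. The inclusion $\mathcal U\subset\sigma(\mathcal Q)$ is immediate, because $\sigma(\mathcal Q)$ contains $\mathcal Q$ and is closed under countable unions. For the reverse inclusion I would show that $\mathcal U$ is itself a $\sigma$-ring containing $\mathcal Q$; minimality of $\sigma(\mathcal Q)$ then forces $\sigma(\mathcal Q)\subset\mathcal U$. Closure of $\mathcal U$ under countable unions is clear, since a countable union of countable families is again countable. The key point is closure under differences: given $A=\bigcup_{i\in I}A_i$ and $C=\bigcup_{j\in J}C_j$ with $A_i,C_j\in\mathcal Q$ and $I,J$ countable, I would write $A\setminus C = \bigcup_{i\in I}(A_i\setminus C)$ and note $A_i\setminus C = \bigcap_{j\in J}(A_i\setminus C_j)$. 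Each $A_i\setminus C_j$ lies in $\mathcal Q$, so the $\delta$-ring property yields $A_i\setminus C\in\mathcal Q$ when $J\neq\varnothing$ (while $A_i\setminus C = A_i\in\mathcal Q$ when $J=\varnothing$), whence $A\setminus C\in\mathcal U$.

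Finally, (iii) follows by combining the two. By (ii) I can write $A=\bigcup_{i\in I}A_i$ with $A_i\in\mathcal Q$ and $I$ countable; since $A\subset B$, each $A_i\subset B$, and (i) applied to this family gives $A=\bigcup_{i\in I}A_i\in\mathcal Q$. The main obstacle is the difference step in (ii): the rest is routine set-algebra, and the only real subtlety throughout is the careful handling of empty index families, which is precisely where the $\delta$-ring hypothesis must be invoked with its nonemptiness caveat in mind.
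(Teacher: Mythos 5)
Your proof is correct and complete. The paper itself gives no proof of Lemma~\ref{ll1}, deferring to Sec.~I.1.3 of Dinculeanu's \emph{Vector measures}, and your argument is precisely the standard one found there: establish (i) by complementing inside $B$ and using closure under countable intersections, characterize $\sigma(\mathcal Q)$ in (ii) by verifying that the family $\mathcal U$ of countable unions is a $\sigma$-ring (with the difference step $A_i\setminus C=\bigcap_{j\in J}(A_i\setminus C_j)$ handled by the $\delta$-ring property), and deduce (iii) from the two; your attention to the empty index sets, which the definition of a $\delta$-ring in this paper makes necessary, is also handled correctly.
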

	
	\begin{definition}\label{dd4}
		Let $\mathfrak A$ be an Abelian group. A map $\mu$ is said to be an $\mathfrak A$-valued additive function if $R_\mu\subset \mathfrak A$ and 
		$\mu(A) = \sum_{i\in I}\mu(\mb A(i))$ for every $A\in D_\mu$ and every finite partition $\mb A\colon I\to D_\mu$ of $A$. A map $\mu$ is called an $\mathfrak A$-valued content if $\mu$ is an $\mathfrak A$-valued additive function and $D_\mu$ is a ring.
	\end{definition}
	
	Let $\mathfrak A$ be a Hausdorff topological Abelian group (HTAG for short). A family $f\colon I\to \mathfrak A$ is called summable in $\mathfrak A$ if the net $J\to \sum_{i\in J}f(i)$, where $J$ ranges over finite subsets of $I$, is convergent in $\mathfrak A$. In this case, its limit is denoted by $\sum_{i\in I}f(i)$. 
	
	\begin{definition}\label{dd5}
		Let $\mathfrak A$ be a HTAG. A map $\mu$ is said to be an $\mathfrak A$-valued $\sigma$-additive function if $R_\mu\subset \mathfrak A$ and the following condition holds:
		\begin{itemize}
			\item [$(\sigma)$] If $A\in D_\mu$ and $\mb A\colon I\to D_\mu$ is a countable partition of $A$, then the family $\mu\circ\mb A$ is summable in $\mathfrak A$ and $\mu(A) = \sum_{i\in I}\mu(\mb A(i))$.
		\end{itemize}
		A map $\mu$ is called an $\mathfrak A$-valued $\sigma$-content (an $\mathfrak A$-valued measure) if $\mu$ is an $\mathfrak A$-valued $\sigma$-additive function and $D_\mu$ is a ring (resp., a $\delta$-ring). 
	\end{definition}
	
	We say that $\mu$ is a positive content ($\sigma$-content, measure) if $\mu$ is an $\R$-valued content (resp., $\sigma$-content, measure) and $\mu(A)\geq 0$ for every $A\in D_\mu$. 
	 
	A set $\mathcal K$ is called hereditary if every subset of every element of $\mathcal K$ belongs to $\mathcal K$. For every set $\mathcal K$, we define the hereditary set $\hat{\mathcal K}$ by the equality
	\[
	\hat{\mathcal K} = \{A : A\subset B\mbox{ for some } B\in\mathcal K\}.
	\]
	Clearly, a set $\mathcal K$ is hereditary if and only if $\mathcal K = \hat{\mathcal K}$.
	
	Given sets $A$ and $B$, we let $A\Delta B$ denote their symmetric difference,
	\[
	A\Delta B = (A\setminus B)\cup(B\setminus A).
	\]
	For every sets $A$, $B$, and $C$, we have $A\setminus C\subset (A\setminus B)\cup (B\setminus C)$ and, therefore,
	\begin{equation}\label{trans}
	A\Delta C\subset (A\Delta B)\cup (B\Delta C).
	\end{equation}
	It is straightforward to verify that the symmetric difference is associative and the intersection is distributive with respect to the symmetric difference:
	\begin{align}
	&(A\Delta B)\Delta C = A\Delta(B\Delta C),\label{assoc}\\
	&(A\Delta B)\cap C = (A\cap C)\Delta(B\cap C)\label{distr}
	\end{align}
	for every sets $A$, $B$, and $C$. If $\fm{\mb A}{I}$ and $\fm{\mb B}{I}$ are families of sets, then
	\begin{align}
	&\left(\bigcup\nolimits_{i\in I}\mb A(i)\right)\Delta \left(\bigcup\nolimits_{i\in I}\mb B(i)\right) \subset \bigcup\nolimits_{i\in I}\mb A(i)\Delta\mb B(i),\label{0}\\
	&\left(\bigcap\nolimits_{i\in I}\mb A(i)\right)\Delta \left(\bigcap\nolimits_{i\in I}\mb B(i)\right) \subset \bigcup\nolimits_{i\in I}\mb A(i)\Delta\mb B(i).\label{01}
	\end{align}

	For every sets $\mathcal Q$ and $\mathcal N$, we define
	\[
	\Delta(\mathcal Q,\mathcal N) = \{A: A\Delta B\in\mathcal N\mbox{ for some }B\in \mathcal Q\}.
	\]
	By~(\ref{assoc}), we have $A\Delta B = N$ if and only if $A = B\Delta N$ and, hence, 	
	\begin{equation}\label{DeltaQN}
	\Delta(\mathcal Q,\mathcal N) = \{A: A = B\Delta N \mbox{ for some }B\in\mathcal Q\mbox{ and }N\in \mathcal N\}. 
	\end{equation}
	Clearly, $\Delta(\mathcal Q,\mathcal N) = \Delta(\mathcal N,\mathcal Q)$ for every $\mathcal Q$ and $\mathcal N$.
	
	We note that a nonempty set $\mathcal Q$ is a ring if and only if the sets $A_1\Delta A_2$ and $A_1\cap A_2$ belong to $\mathcal Q$ for every $A_1,A_2\in\mathcal Q$.
	
	\begin{lemma}\label{l_DeltaQN}
		Let $\mathcal Q$ and $\mathcal N$ be rings such that $N\cap B\in\mathcal N$ for every $B\in\mathcal Q$ and $N\in \mathcal N$ and let $\mathcal K = \Delta(\mathcal Q,\mathcal N)$. Then $\mathcal K = \kappa(\mathcal Q\cup\mathcal N)$ and, in particular, $\mathcal K$ is a ring. If $A\in \mathcal K$ and $N\in \mathcal N$, then $N\cap A\in\mathcal N$. If $A\in \mathcal K$ and $A\subset N$ for some $N\in\mathcal N$, then $A\in\mathcal N$.
	\end{lemma}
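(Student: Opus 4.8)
The plan is to establish the three assertions in turn, with the identity $\mathcal K = \kappa(\mathcal Q\cup\mathcal N)$ reducing—once $\mathcal K$ is known to be a ring—to a routine pair of inclusions. I would first invoke the representation~(\ref{DeltaQN}), which rewrites $\mathcal K$ as the set of all $B\Delta N$ with $B\in\mathcal Q$ and $N\in\mathcal N$; this is the form I will use throughout. Since $\mathcal Q$ and $\mathcal N$ are rings, both contain $\varnothing$, so writing $B = B\Delta\varnothing$ and $N = \varnothing\Delta N$ gives $\mathcal Q\cup\mathcal N\subset\mathcal K$; conversely, every element $B\Delta N$ of $\mathcal K$ lies in the ring $\kappa(\mathcal Q\cup\mathcal N)$. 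Hence the identity follows the moment $\mathcal K$ is shown to be a ring.

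To prove that $\mathcal K$ is a ring I would apply the criterion recorded just before the lemma: it suffices to check that $\mathcal K$ is nonempty and closed under $\Delta$ and $\cap$. Nonemptiness ($\varnothing\in\mathcal K$) and $\Delta$-closure are immediate from the associativity~(\ref{assoc}) and commutativity of the symmetric difference: for $A_i = B_i\Delta N_i$ one gets $A_1\Delta A_2 = (B_1\Delta B_2)\Delta(N_1\Delta N_2)$, where $B_1\Delta B_2\in\mathcal Q$ and $N_1\Delta N_2\in\mathcal N$ because $\mathcal Q$ and $\mathcal N$ are rings.

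The only substantive computation is $\cap$-closure, and this is where the hypothesis $N\cap B\in\mathcal N$ does its work. Expanding $(B_1\Delta N_1)\cap(B_2\Delta N_2)$ by repeated use of the distributive law~(\ref{distr}) produces the four terms $B_1\cap B_2$, $B_1\cap N_2$, $N_1\cap B_2$, and $N_1\cap N_2$, combined via $\Delta$. The first lies in $\mathcal Q$, the last lies in $\mathcal N$, and the two cross terms lie in $\mathcal N$ \emph{precisely} because of the absorption hypothesis. Collecting the three $\mathcal N$-terms into a single element of $\mathcal N$ displays the intersection in the form $B\Delta N$, so it belongs to $\mathcal K$. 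I expect this bookkeeping to be the main (though still elementary) obstacle: one must keep the parenthesization straight, which is exactly what~(\ref{assoc}) and~(\ref{distr}) license, and one must not lose track of which factor is forced into $\mathcal N$ by absorption.

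Finally, the last two assertions drop out of the same expansion. For $A = B\Delta N'\in\mathcal K$ and $N\in\mathcal N$, the distributive law gives $N\cap A = (N\cap B)\Delta(N\cap N')$, where $N\cap B\in\mathcal N$ by absorption and $N\cap N'\in\mathcal N$ since $\mathcal N$ is a ring; hence $N\cap A\in\mathcal N$. The third claim is then immediate: if $A\subset N$ for some $N\in\mathcal N$, then $A = N\cap A$, which we have just shown lies in $\mathcal N$.
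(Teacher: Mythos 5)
Your proposal is correct and follows essentially the same route as the paper: both use the representation~(\ref{DeltaQN}), verify closure under $\Delta$ and $\cap$ via~(\ref{assoc}) and~(\ref{distr}) with the absorption hypothesis handling the cross terms $B_1\cap N_2$ and $N_1\cap B_2$, deduce $\mathcal K = \kappa(\mathcal Q\cup\mathcal N)$ from the two inclusions, and obtain the final two assertions from the identity $N\cap A = (N\cap B)\Delta(N\cap N')$ together with $A = N\cap A$ when $A\subset N$.
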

	\begin{proof}
		Since $\varnothing\in\mathcal Q\cap\mathcal N$, it follows from~(\ref{DeltaQN}) that $\mathcal Q\cup\mathcal N\subset\mathcal K$. Let $A_1,A_2\in\mathcal K$. By~(\ref{DeltaQN}), there are $B_1,B_2\in\mathcal Q$ and $N_1,N_2\in\mathcal N$ such that $A_1 = B_1 \Delta N_1$ and $A_2 = B_2 \Delta N_2$. By~(\ref{assoc}) and~(\ref{distr}), we obtain $A_1\Delta A_2 = (B_1\Delta B_2)\Delta(N_1\Delta N_2)$ and $A_1\cap A_2 = (B_1\cap B_2)\Delta N$, where $N = (B_1\cap N_2)\Delta (B_2\cap N_1)\Delta (N_1\cap N_2)$. Since $B_1\cap N_2$ and $B_2\cap N_1$ belong to $\mathcal N$ and $\mathcal Q$ and $\mathcal N$ are rings, the sets $B_1\Delta B_2$ and $B_1\cap B_2$ belong to $\mathcal Q$ and the sets $N_1\Delta N_2$ and $N$ belong to $\mathcal N$. In view of~(\ref{DeltaQN}), we conclude that $A_1\Delta A_2, A_1\cap A_2\in\mathcal K$, i.e., $\mathcal K$ is a ring. As $\mathcal K \subset \kappa(\mathcal Q\cup\mathcal N)$ by~(\ref{DeltaQN}), we obtain $\mathcal K = \kappa(\mathcal Q\cup\mathcal N)$. Let $A\in \mathcal K$ and $N\in\mathcal N$ and let $B\in\mathcal Q$ and $N'\in\mathcal N$ be such that $A = B\Delta N'$. Then $N\cap A = (N\cap B)\Delta (N\cap N')$ by~(\ref{distr}). Since $N\cap B\in\mathcal N$ and $\mathcal N$ is a ring, it follows that $N\cap A\in\mathcal N$. If $A\in \mathcal K$, $N\in\mathcal N$, and $A\subset N$, then $A\in\mathcal N$ because $A = N\cap A$.
	\end{proof}

\section{Completion rings}
\label{s3}
	
	\begin{definition}
		Let $\mu$ be a map such that $\varnothing\in D_\mu$. We say that $N\in D_\mu$ is a $\mu$-null set if $\mu(N')=\mu(\varnothing)$ for every $N'\in D_\mu$ such that $N'\subset N$. The set of all $\mu$-null sets is denoted by $\mathcal N_\mu$. 
	\end{definition}
	
	If $\mathfrak A$ is an Abelian group and $\mu$ is an $\mathfrak A$-valued additive function such that $\varnothing \in D_\mu$, then $\mu(\varnothing) = \mu(\varnothing) + \mu(\varnothing)$ and, therefore, $\mu(\varnothing)=0$. This implies that
	\[
	\mathcal N_\mu = \{N\in D_\mu : \mu(N') = 0\mbox{ for every }N'\in D_\mu \mbox{ such that } N'\subset N\},
	\]
	i.e., we recover the definition given in the introduction.
	
	\begin{definition}\label{dd7}
		A map $\mu$ is said to admit completion if $D_\mu$ is a ring and $\mu(B_1)=\mu(B_2)$ for every $B_1,B_2\in D_\mu$ such that $B_1\Delta B_2\in\mathcal N_\mu$. If, in addition, $\mathcal N_\mu$ is hereditary, then $\mu$ is called complete. 
	\end{definition}
	
	\begin{proposition}\label{p_Nring}
		Let $\mu$ be a map admitting completion. Then $\mathcal N_\mu$ is a ring. If $D_\mu$ is a $\delta$-ring, then so is $\mathcal N_\mu$. 
	\end{proposition}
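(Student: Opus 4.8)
The plan is to verify the ring axioms directly, exploiting the definition of a ring via $\cup$ and $\setminus$ together with the characterization recorded just before Lemma~\ref{l_DeltaQN}. Since $D_\mu$ is a ring we have $\varnothing\in D_\mu$, and $\varnothing$ trivially satisfies the defining condition of a $\mu$-null set (its only subset in $D_\mu$ is $\varnothing$ itself), so $\mathcal N_\mu$ is nonempty. The first step is to isolate the elementary \emph{heredity-within-$D_\mu$} property: if $N\in\mathcal N_\mu$ and $N'\in D_\mu$ satisfies $N'\subset N$, then $N'\in\mathcal N_\mu$. This is immediate from the definition, since any $N''\in D_\mu$ with $N''\subset N'$ also satisfies $N''\subset N$. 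Because $D_\mu$ is a ring, for $N_1,N_2\in\mathcal N_\mu$ the set $N_1\setminus N_2$ lies in $D_\mu$ and is contained in $N_1$, so heredity already gives $N_1\setminus N_2\in\mathcal N_\mu$; the same argument applied to $N_1\cap N_2$ will be used below.

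It remains to close $\mathcal N_\mu$ under unions, and this is the one place where the hypothesis that $\mu$ admits completion is genuinely needed, heredity alone being insufficient since a union need not be a subset of either summand. Given $N_1,N_2\in\mathcal N_\mu$, I would take an arbitrary $N'\in D_\mu$ with $N'\subset N_1\cup N_2$ and show $\mu(N')=\mu(\varnothing)$. The key computation is the identity $N'\Delta(N'\cap N_1)=N'\setminus N_1$, together with the inclusion $N'\setminus N_1\subset N_2$; since $N'\setminus N_1\in D_\mu$ and $N_2\in\mathcal N_\mu$, heredity gives $N'\setminus N_1\in\mathcal N_\mu$. The defining property of ``admits completion'', applied to the pair $N',\,N'\cap N_1\in D_\mu$, then yields $\mu(N')=\mu(N'\cap N_1)$, and as $N'\cap N_1\subset N_1\in\mathcal N_\mu$ we get $\mu(N'\cap N_1)=\mu(\varnothing)$. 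Hence $\mu(N')=\mu(\varnothing)$ for every such $N'$, i.e.\ $N_1\cup N_2\in\mathcal N_\mu$. Combined with the previous paragraph, this shows $\mathcal N_\mu$ is closed under $\cup$ and $\setminus$, hence is a ring.

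For the $\delta$-ring assertion, suppose $D_\mu$ is a $\delta$-ring. Then $\mathcal N_\mu$ is a ring by the above, so it suffices to check closure under countable intersections. Given a nonempty countable family $\mb N\colon I\to\mathcal N_\mu$, the intersection $\bigcap_{i\in I}\mb N(i)$ lies in $D_\mu$ because $D_\mu$ is a $\delta$-ring, and it is contained in $\mb N(i_0)$ for any fixed $i_0\in I$; heredity then places it in $\mathcal N_\mu$. The only genuine obstacle in the whole argument is the union step: one cannot simply add the two relations $\mu(N'\cap N_1)=\mu(\varnothing)$ and $\mu(N'\setminus N_1)=\mu(\varnothing)$, because no additive structure of $\mathfrak A$ is available here, and one must instead route through the completion hypothesis by means of the identity $N'\Delta(N'\cap N_1)=N'\setminus N_1$.
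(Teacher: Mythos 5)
Your proof is correct and follows essentially the same route as the paper's: closure under $\setminus$ (and $\cap$) via heredity within $D_\mu$, closure under $\cup$ via the decomposition of $N'\subset N_1\cup N_2$ into $N'\cap N_1$ and $N'\setminus N_1=N'\Delta(N'\cap N_1)$ together with the admits-completion hypothesis, and the $\delta$-ring claim via the intersection lying in $D_\mu$ and being contained in a single member of the family. Your closing remark correctly identifies why the completion hypothesis, rather than additivity of $\mathfrak A$, must carry the union step -- this is exactly the point of the paper's group-free setting.
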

	\begin{proof}
		Let $N_1,N_2\in\mathcal N_\mu$, $N = N_1\cup N_2$, and $\tilde N = N_1\setminus N_2$. Since $D_\mu$ is a ring, we have $N,\tilde N\in D_\mu$. As $\tilde N\subset N_1$, it follows that $\tilde N\in\mathcal N_\mu$. Let $N'\in D_\mu$ be such that $N'\subset N$ and let $A = N_1\cap N'$. Then $A\in D_\mu$ and $A\subset N_1$ and, hence, $A\in \mathcal N_\mu$. Let $B = A\Delta N' = N'\setminus N_1$. Then $B\in D_\mu$ and $B\subset N_2$, whence $B\in\mathcal N_\mu$. This implies that $\mu(N') = \mu(A) = \mu(\varnothing)$ and, therefore, $N\in \mathcal N_\mu$. Thus, $\mathcal N_\mu$ is a ring. Let $D_\mu$ be a $\delta$-ring and $\mb N\colon I\to \mathcal N_\mu$ be a nonempty countable family. Since $\mb N(i)\in D_\mu$ for all $i\in I$, the set $N = \bigcap_{i\in I}\mb N(i)$ belongs to $D_\mu$. As $N\subset \mb N(i)$ for every $i\in I$, this implies that $N\in\mathcal N_\mu$. This means that $\mathcal N_\mu$ is a $\delta$-ring.   
	\end{proof}
	
	\begin{definition}\label{d_cring}
		Let $\mu$ be a map admitting completion and $\mathcal N$ be a ring such that $N\cap B\in\mathcal N$ for every $N\in\mathcal N$ and $B\in D_\mu$. We say that $\mathcal N$ is a weak completion ring for $\mu$ if $\mathcal N\cap D_\mu\subset \mathcal N_\mu$. We say that $\mathcal N$ is a completion ring for $\mu$ if $\mathcal N\cap D_\mu = \mathcal N_\mu$.  
	\end{definition}

	By Proposition~\ref{p_Nring}, $\mathcal N_\mu$ is a completion ring for every map $\mu$ admitting completion.
	
	\begin{lemma}\label{l_compl}
		Let $\mathcal N$ be a weak completion ring for $\mu$. 
		\begin{enumerate}
			\item[(i)] Let $B\in D_\mu$ and $B\subset N$ for some $N\in\mathcal N$. Then $B\in\mathcal N_\mu$.
			\item[(ii)] There is a unique map $\nu$ such that $D_\nu = \Delta(D_\mu,\mathcal N)$ and $\nu(A) = \mu(B)$ whenever $A\in D_\nu$, $B\in D_\mu$, and $A\Delta B\in\mathcal N$.
		\end{enumerate}
	\end{lemma}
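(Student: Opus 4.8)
The plan is to treat the two parts separately, doing part (i) first since it is the conceptual seed for part (ii).

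For part (i), the observation is that $B \subset N$ means $B = N \cap B$. Since $N \in \mathcal N$ and $B \in D_\mu$, the ideal-type property built into the definition of a weak completion ring gives $N \cap B \in \mathcal N$, hence $B \in \mathcal N$. As we also have $B \in D_\mu$, this yields $B \in \mathcal N \cap D_\mu \subset \mathcal N_\mu$, which is exactly the claim. No further input is needed here.

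For part (ii), I would split the assertion into uniqueness and existence. Uniqueness is immediate: if $\nu_1$ and $\nu_2$ both satisfy the stated conditions, then for any $A \in \Delta(D_\mu,\mathcal N)$ there is, by the very definition of $\Delta(D_\mu,\mathcal N)$, some $B \in D_\mu$ with $A \Delta B \in \mathcal N$, and the defining relation forces $\nu_1(A) = \mu(B) = \nu_2(A)$. The real content is existence, which reduces to showing that the prescription $\nu(A) = \mu(B)$ is independent of the choice of $B$. Indeed, the definition of $\Delta(D_\mu,\mathcal N)$ guarantees that at least one admissible $B$ exists for each $A \in D_\nu$, so only well-definedness is at stake.

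To verify well-definedness, suppose $B_1,B_2 \in D_\mu$ satisfy $A \Delta B_1 \in \mathcal N$ and $A \Delta B_2 \in \mathcal N$. Using associativity~(\ref{assoc}) together with $A \Delta A = \varnothing$, one has $B_1 \Delta B_2 = (A \Delta B_1) \Delta (A \Delta B_2)$, which belongs to $\mathcal N$ because $\mathcal N$ is a ring and hence closed under symmetric differences. On the other hand, $B_1 \Delta B_2 \in D_\mu$ since $D_\mu$ is a ring. Therefore $B_1 \Delta B_2 \in \mathcal N \cap D_\mu \subset \mathcal N_\mu$, and because $\mu$ admits completion we conclude $\mu(B_1) = \mu(B_2)$. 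Defining $\nu(A)$ to be this common value then produces a map with domain $\Delta(D_\mu,\mathcal N)$ satisfying the required property. The only place demanding any care is this last chain, where all the hypotheses are used at once—the ring structure of both $\mathcal N$ and $D_\mu$, the inclusion $\mathcal N \cap D_\mu \subset \mathcal N_\mu$, and the admitting-completion property of $\mu$—but there is no genuine obstacle beyond assembling them correctly.
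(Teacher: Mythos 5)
Your proof is correct, and part (i) coincides with the paper's argument verbatim. In part (ii) you take a slightly but genuinely cleaner route at the key step. The paper picks a choice map $\tau\colon\Delta(D_\mu,\mathcal N)\to D_\mu$ with $A\Delta\tau(A)\in\mathcal N$, sets $\nu=\mu\circ\tau$, and then, to compare two admissible representatives $B$ and $B'=\tau(A)$, only establishes the \emph{inclusion} $B\Delta B'\subset (A\Delta B)\cup(A\Delta B')\in\mathcal N$ via~(\ref{trans}); since $\mathcal N$ need not be hereditary, this is not yet membership in $\mathcal N$, and the paper must invoke part~(i) (with $B\Delta B'\in D_\mu$) to descend to $\mathcal N_\mu$. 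You instead use the exact identity $B_1\Delta B_2=(A\Delta B_1)\Delta(A\Delta B_2)$, which follows from~(\ref{assoc}) and $A\Delta A=\varnothing$, so that closure of the ring $\mathcal N$ under symmetric differences puts $B_1\Delta B_2$ in $\mathcal N$ outright, and $\mathcal N\cap D_\mu\subset\mathcal N_\mu$ finishes the job with no appeal to part~(i) or to~(\ref{trans}). What your version buys is a self-contained part (ii) and one fewer estimate; what the paper's version buys is a formulation of $\nu$ as an explicit composition $\mu\circ\tau$, making the existence of the map immediate before well-definedness is checked (your ``define $\nu(A)$ to be the common value'' is equally legitimate, since the value set $\{\mu(B):B\in D_\mu,\ A\Delta B\in\mathcal N\}$ is shown to be a singleton). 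Both proofs use the admitting-completion hypothesis identically at the end, and your uniqueness argument matches the paper's remark that uniqueness is obvious.
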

	\begin{proof}
		(i) Since $B= N\cap B$, we have $B\in\mathcal N$, whence $B\in\mathcal N_\mu$ because $\mathcal N\cap D_\mu\subset \mathcal N_\mu$.
		\par\medskip\noindent
		(ii) It suffices to prove the existence of $\nu$, its uniqueness being obvious. Let $\mathcal Q = \Delta(D_\mu,\mathcal N)$ and $\tau\colon \mathcal Q\to D_\mu$ be such that $A\Delta\tau(A)\in\mathcal N$ for every $A\in \mathcal Q$. We set $\nu = \mu\circ\tau$. We obviously have $D_\nu = \mathcal Q$. Let $A\in D_\nu$, $B\in D_\mu$, and $A\Delta B\in\mathcal N$. Let $B' = \tau(A)$ and $N = (A\Delta B)\cup (A\Delta B')$. Since $A\Delta B'\in\mathcal N$ and $\mathcal N$ is a ring, we have $N\in\mathcal N$. As $B\Delta B'\subset N$ by~(\ref{trans}) and $B\Delta B'\in D_\mu$, it follows from~(i) that $B\Delta B'\in \mathcal N_\mu$. Hence, $\nu(A) = \mu(B') = \mu(B)$ because $\mu$ admits completion.  
	\end{proof}
	
	\begin{definition}\label{d_Ncompl}
		Let $\mathcal N$ be a weak completion ring for $\mu$. The map $\nu$ satisfying the conditions of Lemma~\ref{l_compl}(ii) is called the $\mathcal N$-completion of $\mu$ and is denoted by~$\mathfrak C_{\mathcal N}\mu$.
	\end{definition}
	
	\begin{proposition}\label{p_weak}
		Let $\mathcal N$ be a weak completion ring for $\mu$. Then $\mathcal N' = \Delta(\mathcal N,\mathcal N_\mu)$ is a completion ring for $\mu$ and $\mathfrak C_{\mathcal N}\mu = \mathfrak C_{\mathcal N'}\mu$.
	\end{proposition}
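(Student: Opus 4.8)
The plan is to show first that $\mathcal N' = \Delta(\mathcal N,\mathcal N_\mu)$ is a completion ring for $\mu$, and then that replacing $\mathcal N$ by $\mathcal N'$ changes neither the domain nor the values of the completion. Throughout I would use the representation $\mathcal N' = \{N_1\Delta N_0 : N_1\in\mathcal N,\ N_0\in\mathcal N_\mu\}$ furnished by~(\ref{DeltaQN}), together with the fact that $\mathcal N_\mu$ is a ring (Proposition~\ref{p_Nring}). To see that $\mathcal N'$ is a ring I apply Lemma~\ref{l_DeltaQN} with $\mathcal Q = \mathcal N_\mu$: since $\mathcal N_\mu\subset D_\mu$, the weak completion ring property yields $N_1\cap B\in\mathcal N$ for all $N_1\in\mathcal N$ and $B\in\mathcal N_\mu$, which is exactly the hypothesis of that lemma, and $\Delta(\mathcal N_\mu,\mathcal N)=\mathcal N'$. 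For the required intersection property, given $N' = N_1\Delta N_0\in\mathcal N'$ and $B\in D_\mu$, distributivity~(\ref{distr}) gives $N'\cap B = (N_1\cap B)\Delta(N_0\cap B)$, where $N_1\cap B\in\mathcal N$ by the weak completion ring property and $N_0\cap B\in\mathcal N_\mu$ because $N_0\cap B\in D_\mu$ and $N_0\cap B\subset N_0$; hence $N'\cap B\in\mathcal N'$.

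The heart of the matter is the equality $\mathcal N'\cap D_\mu = \mathcal N_\mu$, which upgrades the mere inclusion $\mathcal N\cap D_\mu\subset\mathcal N_\mu$ available for the weak completion ring. One inclusion is immediate from $\mathcal N_\mu\subset D_\mu$ and $\mathcal N_\mu\subset\mathcal N'$. For the reverse I would take $A\in\mathcal N'\cap D_\mu$, write $A = N_1\Delta N_0$ with $N_1\in\mathcal N$ and $N_0\in\mathcal N_\mu$, and solve $N_1 = A\Delta N_0$ using~(\ref{assoc}). Because $A$ and $N_0$ both lie in the ring $D_\mu$, so does $N_1$, whence $N_1\in\mathcal N\cap D_\mu\subset\mathcal N_\mu$, and therefore $A = N_1\Delta N_0\in\mathcal N_\mu$ since $\mathcal N_\mu$ is a ring. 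This manoeuvre—exploiting $N_0\in\mathcal N_\mu\subset D_\mu$ to pull $N_1$ back into $D_\mu$—is the one genuine idea in the argument and the step I expect to be the main obstacle to locate, although it is short once spotted. With it, $\mathcal N'$ is a completion ring.

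It remains to verify $\mathfrak C_{\mathcal N}\mu = \mathfrak C_{\mathcal N'}\mu$. Their domains coincide: $\Delta(D_\mu,\mathcal N)\subset\Delta(D_\mu,\mathcal N')$ because $\mathcal N\subset\mathcal N'$ (as $\varnothing\in\mathcal N_\mu$), while any element $B\Delta(N_1\Delta N_0)$ of $\Delta(D_\mu,\mathcal N')$ rewrites via~(\ref{assoc}) as $(B\Delta N_0)\Delta N_1\in\Delta(D_\mu,\mathcal N)$, since $B\Delta N_0\in D_\mu$. For the values, given $A$ in this common domain I choose $B\in D_\mu$ with $A\Delta B\in\mathcal N$, so that $\mathfrak C_{\mathcal N}\mu(A) = \mu(B)$ by definition; as $A\Delta B\in\mathcal N\subset\mathcal N'$, the well-definedness established in Lemma~\ref{l_compl}(ii) applied to $\mathfrak C_{\mathcal N'}\mu$ forces $\mathfrak C_{\mathcal N'}\mu(A) = \mu(B)$ as well, and the two completions agree.
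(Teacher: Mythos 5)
Your proof is correct and takes essentially the same approach as the paper's: the same application of Lemma~\ref{l_DeltaQN} with $\mathcal Q = \mathcal N_\mu$ to get the ring structure, the same key manoeuvre of solving $N_1 = A\Delta N_0$ via~(\ref{assoc}) to pull $N_1$ into $D_\mu$ and conclude $\mathcal N'\cap D_\mu\subset\mathcal N_\mu$, and the same rewriting $B\Delta(N_1\Delta N_0) = (B\Delta N_0)\Delta N_1$ to compare the two completions. The only difference is organizational and immaterial: the paper first shows $\mathfrak C_{\mathcal N'}\mu$ extends $\mathfrak C_{\mathcal N}\mu$ and then proves the reverse domain inclusion, whereas you establish domain equality first and then match values via Lemma~\ref{l_compl}(ii).
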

	\begin{proof}
		Since $\mathcal N_\mu$ is a ring by Proposition~\ref{p_Nring} and $\mathcal N' = \Delta(\mathcal N_\mu,\mathcal N)$, Lemma~\ref{l_DeltaQN} implies that $\mathcal N'$ is a ring.
		Let $N\in \mathcal N'$ and $B\in D_\mu$. Then $N = N_1\Delta N_2$, where $N_1\in\mathcal N$ and $N_2\in\mathcal N_\mu$. By~(\ref{distr}), we have $N\cap B = (N_1\cap B)\Delta (N_2\cap B)$ and, hence, $N\cap B\in \mathcal N'$ because $N_1\cap B\in \mathcal N$ and $N_2\cap B\in\mathcal N_\mu$. We obviously have $\mathcal N_\mu\subset \mathcal N'\cap D_\mu$. Let $N\in \mathcal N'\cap D_\mu$ and let $N_1$ and $N_2$ be as above. By~(\ref{assoc}), we have $N_1 = N\Delta N_2$ and, hence, $N_1\in D_\mu$. This implies that $N_1\in \mathcal N_\mu$ because $\mathcal N$ is a weak completion ring for $\mu$ and, therefore, $N\in\mathcal N_\mu$. Thus, $\mathcal N_\mu = \mathcal N'\cap D_\mu$, i.e., $\mathcal N'$ is a completion ring for $\mu$. Let $\nu = \mathfrak C_{\mathcal N}\mu$ and $\nu' = \mathfrak C_{\mathcal N'}\mu$. Let $A\in D_\nu$ and let $B\in D_\mu$ be such that $A\Delta B\in \mathcal N$. Since $\mathcal N\subset \mathcal N'$, we have $A\Delta B\in\mathcal N'$ and, hence, $A\in D_{\nu'}$ and $\nu'(A) = \mu(B) = \nu(A)$. This means that $\nu'$ is an extension of $\nu$ and it remains to show that $D_{\nu'}\subset D_\nu$. Let $A\in D_{\nu'}$ and let $B\in D_\mu$ and $N\in \mathcal N'$ be such that $A = B\Delta N$. Let $N_1\in\mathcal N$ and $N_2\in\mathcal N_\mu$ be such that $N = N_1\Delta N_2$. By~(\ref{assoc}), we obtain $A = B'\Delta N_1$, where $B' = B\Delta N_2$. Since $B'\in D_\mu$, we conclude that $A\in D_\nu$.  
	\end{proof}
	
	In view of Proposition~\ref{p_weak}, using weak completion rings gives rise to no new objects in comparison with completion rings. For this reason, we shall confine ourselves to completion rings in the rest of the paper. By means of Proposition~\ref{p_weak}, the results given below can be restated in terms of weak completion rings (in general, in a more cumbersome form). 

	\begin{lemma}\label{l_cring}
		Let $\mathcal N$ be a completion ring for $\mu$ and $\nu = \mathfrak C_{\mathcal N}\mu$. If $A\in D_\nu$ and $N\in\mathcal N$, then $N\cap A\in\mathcal N$. If $A\in D_\nu$ and $A\subset N$ for some $N\in\mathcal N$, then $A\in\mathcal N$. 
	\end{lemma}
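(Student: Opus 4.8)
The plan is to reduce both assertions of Lemma~\ref{l_cring} to the analogous facts already established for the ring $\mathcal K=\Delta(\mathcal Q,\mathcal N)$ in Lemma~\ref{l_DeltaQN}. Recall that $D_\nu=\Delta(D_\mu,\mathcal N)$ by Lemma~\ref{l_compl}(ii), and that $\mathcal N$ is now a completion ring for $\mu$, so that $\mathcal N_\mu=\mathcal N\cap D_\mu$ and, in particular, $N\cap B\in\mathcal N$ for every $N\in\mathcal N$ and $B\in D_\mu$. These are exactly the hypotheses under which Lemma~\ref{l_DeltaQN} applies with $\mathcal Q=D_\mu$: both $D_\mu$ and $\mathcal N$ are rings (the former by assumption, the latter by definition of a completion ring), and the compatibility condition $N\cap B\in\mathcal N$ holds. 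Hence $\mathcal K=\Delta(D_\mu,\mathcal N)=D_\nu$ is a ring, and Lemma~\ref{l_DeltaQN} yields precisely the two desired conclusions: if $A\in\mathcal K$ and $N\in\mathcal N$, then $N\cap A\in\mathcal N$; and if $A\in\mathcal K$ with $A\subset N$ for some $N\in\mathcal N$, then $A\in\mathcal N$.

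So the proof is essentially a one-line invocation. First I would note explicitly that $D_\nu=\Delta(D_\mu,\mathcal N)$, so that $A\in D_\nu$ means exactly $A\in\mathcal K$ in the notation of Lemma~\ref{l_DeltaQN}. Then I would verify that the hypotheses of that lemma are met: $D_\mu$ is a ring because $\mu$ admits completion (Definition~\ref{dd7}), $\mathcal N$ is a ring by Definition~\ref{d_cring}, and the intersection condition $N\cap B\in\mathcal N$ for $N\in\mathcal N$, $B\in D_\mu$ is built into the definition of a completion ring. With these verified, both claims follow by directly quoting the corresponding sentences of Lemma~\ref{l_DeltaQN}.

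I do not anticipate any real obstacle here; the statement is deliberately an immediate corollary of the set-theoretic Lemma~\ref{l_DeltaQN}, repackaged in the language of $\mathcal N$-completions. The only thing to be careful about is making the identification $D_\nu=\Delta(D_\mu,\mathcal N)$ explicit before applying Lemma~\ref{l_DeltaQN}, since that is what licenses treating an arbitrary $A\in D_\nu$ as an element of $\mathcal K$. Everything else is a matter of matching notation.

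\begin{proof}
	By Lemma~\ref{l_compl}(ii), we have $D_\nu=\Delta(D_\mu,\mathcal N)$. Since $\mu$ admits completion, $D_\mu$ is a ring, and $\mathcal N$ is a ring with $N\cap B\in\mathcal N$ for every $N\in\mathcal N$ and $B\in D_\mu$ because $\mathcal N$ is a completion ring for $\mu$. Thus the hypotheses of Lemma~\ref{l_DeltaQN} are satisfied with $\mathcal Q=D_\mu$ and $\mathcal K=\Delta(D_\mu,\mathcal N)=D_\nu$. If $A\in D_\nu$ and $N\in\mathcal N$, then Lemma~\ref{l_DeltaQN} gives $N\cap A\in\mathcal N$. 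If $A\in D_\nu$ and $A\subset N$ for some $N\in\mathcal N$, then Lemma~\ref{l_DeltaQN} gives $A\in\mathcal N$.
\end{proof}
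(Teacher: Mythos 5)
Your proof is correct and is essentially the paper's own argument: the paper also proves Lemma~\ref{l_cring} by noting $D_\nu=\Delta(D_\mu,\mathcal N)$ and invoking Lemma~\ref{l_DeltaQN} directly. Your explicit verification of that lemma's hypotheses (both $D_\mu$ and $\mathcal N$ are rings, and $N\cap B\in\mathcal N$ for $N\in\mathcal N$, $B\in D_\mu$) is a harmless elaboration of what the paper leaves implicit.
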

	\begin{proof}
		Since $D_\nu = \Delta(D_\mu,\mathcal N)$, the statement follows from Lemma~\ref{l_DeltaQN}.
	\end{proof}	

	Given a map $\mu$, we set $S[\mu] = \bigcup D_\mu$. 
	
	\begin{theorem}\label{p_Ncompl}
		Let $\mathcal N$ be a completion ring for $\mu$ and $\nu = \mathfrak C_{\mathcal N}\mu$. Then $D_\nu = \kappa(D_\mu\cup\mathcal N)$, $\nu$ admits completion and is an extension of $\mu$, $\mathcal N_\nu = \mathcal N$, and $S[\nu] = S[\mu]\cup\bigcup\mathcal N$. The equality $\nu = \mu$ holds if and only if $\mathcal N = \mathcal N_\mu$.
	\end{theorem}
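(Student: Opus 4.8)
The plan is to establish the five assertions in the order listed, deriving each from the explicit description of $\nu$ furnished by Lemma~\ref{l_compl}(ii): $D_\nu = \Delta(D_\mu,\mathcal N)$ together with the rule $\nu(A) = \mu(B)$ whenever $A\in D_\nu$, $B\in D_\mu$, and $A\Delta B\in\mathcal N$. First I would read off $D_\nu = \kappa(D_\mu\cup\mathcal N)$ directly from Lemma~\ref{l_DeltaQN}, whose hypotheses (that $D_\mu$ and $\mathcal N$ are rings and that $N\cap B\in\mathcal N$ for $B\in D_\mu$, $N\in\mathcal N$) are precisely what the definition of a completion ring supplies. The same representation gives $S[\nu] = S[\mu]\cup\bigcup\mathcal N$ at once: every $A\in D_\nu$ has the form $B\Delta N$ with $B\in D_\mu$, $N\in\mathcal N$, so $A\subset B\cup N\subset S[\mu]\cup\bigcup\mathcal N$, while the reverse inclusion holds because $D_\mu\cup\mathcal N\subset D_\nu$. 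That $\nu$ extends $\mu$ is immediate, since for $B\in D_\mu$ one has $B\Delta B = \varnothing\in\mathcal N$, whence $B\in D_\nu$ and $\nu(B)=\mu(B)$.

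The crux is the identity $\mathcal N_\nu = \mathcal N$. The inclusion $\mathcal N\subset\mathcal N_\nu$ is the easy half: given $N\in\mathcal N$ one checks $N\in D_\nu$ and $\nu(N)=\mu(\varnothing)=\nu(\varnothing)$, and if $N'\in D_\nu$ with $N'\subset N$, then $N'\in\mathcal N$ by Lemma~\ref{l_cring}, so $\nu(N')=\nu(\varnothing)$ as well. The reverse inclusion $\mathcal N_\nu\subset\mathcal N$ is where I expect the main obstacle to lie. Given $N\in\mathcal N_\nu$, I would write $N = B\Delta N_0$ with $B\in D_\mu$ and $N_0\in\mathcal N$; the target is $B\in\mathcal N_\mu$, for then $B\in\mathcal N$ and hence $N=B\Delta N_0\in\mathcal N$. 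To reach it I would fix an arbitrary $B'\in D_\mu$ with $B'\subset B$ and examine $B'\setminus N_0 = B'\setminus(B'\cap N_0)$. Since $B'\cap N_0\in\mathcal N\subset D_\nu$, this set lies in $D_\nu$; since $B'\subset B$, it is contained in $B\setminus N_0\subset N$, so membership of $N$ in $\mathcal N_\nu$ forces $\nu(B'\setminus N_0)=\nu(\varnothing)=\mu(\varnothing)$. On the other hand $(B'\setminus N_0)\Delta B' = B'\cap N_0\in\mathcal N$, so the defining rule for $\nu$ yields $\nu(B'\setminus N_0)=\mu(B')$. Comparing the two evaluations gives $\mu(B')=\mu(\varnothing)$ for every such $B'$, which is exactly the condition $B\in\mathcal N_\mu$. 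The delicate point is to carry out this computation with the running variable $B'\subset B$ rather than with $B$ alone, so that the full null-set condition is verified and not merely $\mu(B)=\mu(\varnothing)$.

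With $\mathcal N_\nu=\mathcal N$ in hand, the remaining assertions follow quickly. To show that $\nu$ admits completion I would take $A_1,A_2\in D_\nu$ with $A_1\Delta A_2\in\mathcal N_\nu=\mathcal N$, pick $B_1,B_2\in D_\mu$ with $A_i\Delta B_i\in\mathcal N$ and $\nu(A_i)=\mu(B_i)$, and use the associativity~(\ref{assoc}) to express $B_1\Delta B_2$ as the symmetric difference of the three members $B_1\Delta A_1$, $A_1\Delta A_2$, $A_2\Delta B_2$ of $\mathcal N$; hence $B_1\Delta B_2\in\mathcal N\cap D_\mu=\mathcal N_\mu$, and since $\mu$ admits completion, $\nu(A_1)=\mu(B_1)=\mu(B_2)=\nu(A_2)$. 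Finally, for the equivalence: if $\mathcal N=\mathcal N_\mu$, then $\mathcal N\subset D_\mu$, so $D_\mu\cup\mathcal N=D_\mu$ and $D_\nu=\kappa(D_\mu)=D_\mu$, whence the extension $\nu$ coincides with $\mu$; conversely, if $\nu=\mu$, then $\mathcal N=\mathcal N_\nu=\mathcal N_\mu$. I anticipate nothing here beyond routine bookkeeping.
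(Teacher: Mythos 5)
Your proposal is correct and takes essentially the same route as the paper: the same reduction to Lemma~\ref{l_DeltaQN} for $D_\nu$, the same use of Lemma~\ref{l_cring} for $\mathcal N\subset\mathcal N_\nu$, and the same test-set argument over all $B'\in D_\mu$ with $B'\subset B$ for the key inclusion $\mathcal N_\nu\subset\mathcal N$. The only cosmetic deviations are that you test with $B'\setminus N_0$ (read off directly as $B'\Delta(B'\cap N_0)\in\Delta(D_\mu,\mathcal N)$) where the paper tests with $N'=B'\cap N$ and invokes Lemma~\ref{l_cring}, and that in the admits-completion step you use the exact identity $B_1\Delta B_2=(B_1\Delta A_1)\Delta(A_1\Delta A_2)\Delta(A_2\Delta B_2)$ together with closure of $\mathcal N$ under $\Delta$, where the paper uses the inclusion~(\ref{trans}) followed by Lemma~\ref{l_compl}(i) --- both are sound.
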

	\begin{proof}
		Since $D_\nu = \Delta(D_\mu,\mathcal N)$, it follows from Lemma~\ref{l_DeltaQN} that $D_\nu = \kappa(D_\mu\cup\mathcal N)$ and, in particular, that $D_\nu$ is a ring and $D_\mu\cup\mathcal N\subset D_\nu$. For every $B\in D_\mu$, we have $\nu(B) = \mu(B)$ because $B\Delta B =\varnothing\in \mathcal N$. This means that $\nu$ is an extension of $\mu$.
		
		Let $N\in\mathcal N$ and let $N'\in D_\nu$ be such that $N'\subset N$. Then $N'\in\mathcal N$ by Lemma~\ref{l_cring}. Since $N'\Delta\varnothing = N'\in\mathcal N$, we obtain $\nu(N') = \mu(\varnothing) = \nu(\varnothing)$. As $N\in D_\nu$, it follows that $N\in \mathcal N_\nu$, i.e., $\mathcal N\subset \mathcal N_\nu$. Let $N\in \mathcal N_\nu$. Since $N\in D_\nu$, there is $B\in D_\mu$ such that $N\Delta B\in \mathcal N$. Let $B'\in D_\mu$ and $B'\subset B$. Setting $N' = B'\cap N$, we obtain $N'\Delta B' = B'\setminus N\subset N\Delta B$. The sets $N'$ and $N'\Delta B'$ belong to $D_\nu$ because $D_\nu$ is a ring. Lemma~\ref{l_cring} therefore implies that $N'\Delta B'\in\mathcal N$. Since $N'\subset N$, we have $\mu(B') = \nu(N') = \nu(\varnothing) = \mu(\varnothing)$ and, hence, $B\in\mathcal N_\mu$. As $\mathcal N_\mu\subset \mathcal N$, it follows that $B\in\mathcal N$. This implies that $N\in\mathcal N$ because $N = (N\Delta B)\Delta B$ by~(\ref{assoc}) and $\mathcal N$ is a ring. Thus, $\mathcal N_\nu\subset \mathcal N$ and, therefore, $\mathcal N_\nu = \mathcal N$.    
		
		Let $A_1,A_2\in D_\nu$ and $A_1\Delta A_2\in \mathcal N_\nu$. Then $A_1\Delta A_2\in \mathcal N$. Let $B_1,B_2\in D_\mu$ be such that the sets $A_1\Delta B_1$ and $A_2\Delta B_2$ belong to $\mathcal N$. By~(\ref{trans}), we have $B_1\Delta B_2\subset (B_1\Delta A_1)\cup (A_1\Delta A_2)\cup (A_2\Delta B_2)$. The set in the right-hand side of this inclusion belongs to $\mathcal N$ because $\mathcal N$ is a ring. As $B_1\Delta B_2\in D_\mu$, it follows from Lemma~\ref{l_compl}(i) that $B_1\Delta B_2\in\mathcal N_\mu$. Since $\mu$ admits completion, we obtain $\nu(A_1) = \mu(B_1) = \mu(B_2) = \nu(A_2)$. This means that $\nu$ admits completion.
		
		Let $S = S[\mu]\cup\bigcup\mathcal N$. Since $D_\mu\cup\mathcal N\subset D_\nu$, we have $S\subset S[\nu]$. Let $A\in D_\nu$ and $B\in D_\mu$ be such that $A\Delta B\in \mathcal N$. Then $A\subset B\cup(A\Delta B)\subset S$ and, hence, $S[\nu]\subset S$. Thus, $S[\nu] = S$. 
		
		Let $\mathcal N = \mathcal N_\mu$ and $A\in D_\nu$. Then $A\Delta B\in \mathcal N_\mu$ for some $B\in D_\mu$. Since $A = (A\Delta B)\Delta B$ by~(\ref{assoc}), we conclude that $A\in D_\mu$. Thus, $D_\nu\subset D_\mu$ and, hence, $\nu = \mu$. If $\nu = \mu$, then $D_\nu = D_\mu$, whence $\mathcal N\subset D_\mu$. It follows that $\mathcal N = \mathcal N\cap D_\mu = \mathcal N_\mu$.
	\end{proof}
	
	\begin{proposition}\label{p_trans}
		Let $\mathcal N$ be a completion ring for $\mu$ and $\nu = \mathfrak C_{\mathcal N}\mu$.
		\begin{enumerate}
			\item[(i)] $\mathcal N'$ is a completion ring for $\nu$ if and only if $\mathcal N'$ is a completion ring for $\mu$ and $\mathcal N\subset \mathcal N'$.
			\item[(ii)] Let $\mathcal N'$ be a completion ring for $\nu$. Then $\mathfrak C_{\mathcal N'}\nu = \mathfrak C_{\mathcal N'}\mu$. 
		\end{enumerate}
	\end{proposition}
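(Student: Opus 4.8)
The plan is to lean throughout on the structural description of $\nu = \mathfrak C_{\mathcal N}\mu$ provided by Theorem~\ref{p_Ncompl}: namely $D_\nu = \Delta(D_\mu,\mathcal N)$, $\mathcal N_\nu = \mathcal N$, and the fact that $\nu$ admits completion and extends $\mu$. I will use repeatedly that $D_\mu\subset D_\nu$, that $\mathcal N_\mu = \mathcal N\cap D_\mu\subset\mathcal N$ (because $\mathcal N$ is a completion ring for $\mu$), the representation~(\ref{DeltaQN}) of $\Delta(\cdot,\cdot)$, and the algebraic identities~(\ref{assoc}) and~(\ref{distr}).

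For the forward direction of~(i), I would assume $\mathcal N'$ is a completion ring for $\nu$ and deduce the two requirements for $\mu$. The stability condition $N'\cap B\in\mathcal N'$ for $N'\in\mathcal N'$ and $B\in D_\mu$ is immediate from the same condition for $B\in D_\nu$, since $D_\mu\subset D_\nu$. The defining equality $\mathcal N'\cap D_\nu = \mathcal N_\nu = \mathcal N$ gives at once $\mathcal N = \mathcal N'\cap D_\nu\subset\mathcal N'$, and intersecting it with $D_\mu$ yields $\mathcal N'\cap D_\mu = \mathcal N\cap D_\mu = \mathcal N_\mu$; hence $\mathcal N'$ is a completion ring for $\mu$ that contains $\mathcal N$.

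For the converse of~(i), I would assume $\mathcal N'$ is a completion ring for $\mu$ with $\mathcal N\subset\mathcal N'$. To verify the stability condition for $\nu$, I would write an arbitrary $B\in D_\nu$ as $B = B_0\Delta M$ with $B_0\in D_\mu$, $M\in\mathcal N$ via~(\ref{DeltaQN}); then $N'\cap B = (N'\cap B_0)\Delta(N'\cap M)$ by~(\ref{distr}), and both summands lie in $\mathcal N'$ (the first since $\mathcal N'$ is a completion ring for $\mu$, the second since $M\in\mathcal N\subset\mathcal N'$ and $\mathcal N'$ is a ring). The delicate point is the equality $\mathcal N'\cap D_\nu = \mathcal N$: the inclusion $\mathcal N\subset\mathcal N'\cap D_\nu$ is clear, while for the reverse I would take $A\in\mathcal N'\cap D_\nu$, write $A = B\Delta M$ with $B\in D_\mu$, $M\in\mathcal N$, and observe that $B = A\Delta M\in\mathcal N'$ (as $A,M\in\mathcal N'$); then $B\in\mathcal N'\cap D_\mu = \mathcal N_\mu\subset\mathcal N$, so $A = B\Delta M\in\mathcal N$. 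This settles~(i).

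Finally, for~(ii), part~(i) already guarantees that $\mathcal N'$ is a completion ring for $\mu$ as well, so both $\mathfrak C_{\mathcal N'}\nu$ and $\mathfrak C_{\mathcal N'}\mu$ are defined. I would first check that their domains coincide: writing $D_\nu = \Delta(D_\mu,\mathcal N)$, any $A\in\Delta(D_\nu,\mathcal N')$ has the form $A = B_0\Delta(M\Delta M')$ with $B_0\in D_\mu$, $M\in\mathcal N\subset\mathcal N'$, $M'\in\mathcal N'$, and $M\Delta M'\in\mathcal N'$, so $A\in\Delta(D_\mu,\mathcal N')$; the opposite inclusion is immediate from $D_\mu\subset D_\nu$. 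For the values, I would fix $A$ in this common domain and choose $B_0\in D_\mu$ with $A\Delta B_0\in\mathcal N'$; since $B_0\in D_\nu$ too, the defining property of Lemma~\ref{l_compl}(ii) gives $(\mathfrak C_{\mathcal N'}\nu)(A) = \nu(B_0)$ and $(\mathfrak C_{\mathcal N'}\mu)(A) = \mu(B_0)$, and these agree because $\nu$ extends $\mu$. The only genuine obstacle is the identity $\mathcal N'\cap D_\nu = \mathcal N$ in the converse of~(i), where the hypothesis $\mathcal N\subset\mathcal N'$ is used essentially to move symmetric differences between $\mathcal N$ and $\mathcal N'$; everything else is routine bookkeeping with~(\ref{assoc}) and~(\ref{distr}).
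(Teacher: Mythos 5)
Your proposal is correct, and part~(i) coincides with the paper's proof almost line for line: the same use of $\mathcal N_\nu = \mathcal N$ from Theorem~\ref{p_Ncompl}, the same intersection argument in the forward direction, the identical identity $N\cap A = (N\cap(A\Delta B))\Delta(N\cap B)$ for the stability condition, and the same $\Delta$-shuffling ($B = A\Delta M\in\mathcal N'\cap D_\mu = \mathcal N_\mu\subset\mathcal N$, hence $A\in\mathcal N$) for the reverse inclusion.

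In part~(ii) you take a genuinely more direct route than the paper. The paper first verifies that $\nu' = \mathfrak C_{\mathcal N'}\nu$ extends $\mu' = \mathfrak C_{\mathcal N'}\mu$ and then proves $D_{\nu'}\subset D_{\mu'}$ by estimating $A\Delta C\subset(A\Delta B)\cup(B\Delta C)$ via~(\ref{trans}) and invoking Lemma~\ref{l_cring} to conclude $A\Delta C\in\mathcal N'$ from the fact that it is a subset of an element of $\mathcal N'$. You instead get exact domain equality in one algebraic stroke: regrouping $A = (B_0\Delta M)\Delta M' = B_0\Delta(M\Delta M')$ by~(\ref{assoc}) and absorbing $M\Delta M'$ into $\mathcal N'$ via $\mathcal N\subset\mathcal N'$, after which the values agree trivially because any witness $B_0\in D_\mu$ also witnesses the definition of $\mathfrak C_{\mathcal N'}\nu$ and $\nu|_{D_\mu} = \mu$. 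Your version is shorter and bypasses Lemma~\ref{l_cring} entirely; it exploits that the associativity of $\Delta$ gives an exact identity where the paper settles for a containment (indeed the paper could equally have written $A\Delta C = (A\Delta B)\Delta(B\Delta C)$). What the paper's phrasing buys in exchange is uniformity: the pattern ``bound a symmetric difference by a union in $\mathcal N'$, then apply Lemma~\ref{l_cring}'' is the same hereditary estimate reused in Theorems~\ref{ll10} and~\ref{t_meas} and in Lemma~\ref{l_partition}, where exact identities are no longer available and the containment argument is forced. Both proofs are complete; there is no gap in yours.
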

	\begin{proof}
		(i) Let $\mathcal N'$ be a completion ring for $\nu$. Then $\mathcal N_\nu = \mathcal N'\cap D_\nu$. Since 
		\begin{equation}\label{N=N}
		\mathcal N_\nu = \mathcal N
		\end{equation}
		by Theorem~\ref{p_Ncompl}, it follows that $\mathcal N\subset \mathcal N'$. If $B\in D_\mu$ and $N\in\mathcal N'$, then $N\cap B\in\mathcal N'$ because $D_\mu\subset D_\nu$. Since $\mathcal N$ is a completion ring for $\mu$, we have $\mathcal N_\mu = \mathcal N\cap D_\mu$. In view of~(\ref{N=N}), it follows that
		\[
		\mathcal N_\mu = \mathcal N_\nu\cap D_\mu = \mathcal N'\cap D_\nu \cap D_\mu = \mathcal N'\cap D_\mu.
		\] 
		Thus, $\mathcal N'$ is a completion ring for $\mu$.
		
		Conversely, let $\mathcal N'$ be a completion ring for $\mu$ and $\mathcal N\subset \mathcal N'$. Let $A\in D_\nu$ and $N\in\mathcal N'$. Let $B\in D_\mu$ be such that $A\Delta B\in\mathcal N$. As $\mathcal N'$ is a completion ring for $\mu$, we have $N\cap B\in \mathcal N'$. The equality
		\[
		N\cap A = (N\cap (A\Delta B))\Delta (N\cap B),
		\]
		which follows from~(\ref{assoc}) and~(\ref{distr}), therefore implies that $N\cap A\in\mathcal N'$. By~(\ref{N=N}), we have $\mathcal N\subset D_\nu$ and, hence, $\mathcal N\subset \mathcal N'\cap D_\nu$. Let $N\in \mathcal N'\cap D_\nu$. As $N\in D_\nu$, there is $B\in D_\mu$ such that $N\Delta B\in\mathcal N$. By~(\ref{assoc}), we have $B = (B\Delta N)\Delta N$ and, hence, $B\in\mathcal N'$. Since both $\mathcal N$ and $\mathcal N'$ are completion rings for $\mu$, we have $\mathcal N\cap D_\mu = \mathcal N'\cap D_\mu = \mathcal N_\mu$ and, therefore, $B\in\mathcal N$. This implies that $N\in\mathcal N$ because $N = (N\Delta B)\Delta B$ by~(\ref{assoc}). Thus, $\mathcal N'\cap D_\nu\subset\mathcal N$. In view of~(\ref{N=N}), we conclude that $\mathcal N_\nu=\mathcal N'\cap D_\nu$. This means that $\mathcal N'$ is a completion ring for $\nu$.
		\par\medskip\noindent
		(ii) By~(i), $\mathcal N'$ is a completion ring for $\mu$ and $\mathcal N\subset\mathcal N'$. In particular, $\mathfrak C_{\mathcal N'}\mu$ is well defined. Let $\mu' =\mathfrak C_{\mathcal N'}\mu$ and $\nu' = \mathfrak C_{\mathcal N'}\nu$. Let $A\in D_{\mu'}$ and let $B\in D_\mu$ be such that $A\Delta B\in \mathcal N'$. Since $\nu$ is an extension of $\mu$, we have $B\in D_\nu$. This implies that $A\in D_{\nu'}$ and $\nu'(A) = \nu(B) = \mu(B) = \mu'(A)$. Thus, $\nu'$ is an extension of $\mu'$ and it remains to show that $D_{\nu'}\subset D_{\mu'}$. Let $A\in D_{\nu'}$ and let $B\in D_\nu$ be such that $A\Delta B\in\mathcal N'$. Let $C\in D_\mu$ be such that $B\Delta C\in\mathcal N$. By~(\ref{trans}), we have $A\Delta C\subset A\Delta B\cup B\Delta C$. Since the set in the right-hand side of this inclusion belongs to $\mathcal N'$ and $A\Delta C\in D_{\nu'}$, it follows from Lemma~\ref{l_cring} that $A\Delta C\in \mathcal N'$ and, hence, $A\in D_{\mu'}$. Thus, $D_{\nu'}\subset D_{\mu'}$.  
	\end{proof}
	
	\begin{proposition}\label{p_ext}
		Let $\mathcal N$ and  $\mathcal N'$ be completion rings for $\mu$, $\nu = \mathfrak C_{\mathcal N}\mu$, and $\nu' = \mathfrak C_{\mathcal N'}\mu$. Then $\nu'$ is an extension of $\nu$ if and only if $\mathcal N\subset \mathcal N'$. The equality $\nu' = \nu$ holds if and only if $\mathcal N' = \mathcal N$.  
	\end{proposition}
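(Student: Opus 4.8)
The plan is to reduce everything to two facts from Theorem~\ref{p_Ncompl}, namely $\mathcal N_\nu = \mathcal N$ and $\mathcal N_{\nu'} = \mathcal N'$, together with the transitivity Proposition~\ref{p_trans}. The equality assertion will follow almost immediately once the extension assertion is in hand, so the heart of the matter is the biconditional ``$\nu'$ extends $\nu$ $\iff$ $\mathcal N\subset\mathcal N'$''.

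For the ``if'' direction I would argue purely formally. Assuming $\mathcal N\subset\mathcal N'$, Proposition~\ref{p_trans}(i) shows that $\mathcal N'$ is a completion ring for $\nu$, and Proposition~\ref{p_trans}(ii) then gives $\mathfrak C_{\mathcal N'}\nu = \mathfrak C_{\mathcal N'}\mu = \nu'$. Since Theorem~\ref{p_Ncompl}, applied to $\nu$ with completion ring $\mathcal N'$, guarantees that $\mathfrak C_{\mathcal N'}\nu$ is an extension of $\nu$, we conclude that $\nu'$ extends $\nu$. This direction is short and uses no computation.

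The ``only if'' direction is where the work lies, and I expect it to be the main obstacle. Suppose $\nu'$ extends $\nu$; then $D_\nu\subset D_{\nu'}$, so $\mathcal N = \mathcal N_\nu\subset D_{\nu'}$, and I must upgrade this to $\mathcal N\subset\mathcal N' = \mathcal N_{\nu'}$. The difficulty is that a set $N\in\mathcal N_\nu$ is only known to kill the $D_\nu$-subsets below it, whereas membership in $\mathcal N_{\nu'}$ demands that all $D_{\nu'}$-subsets be null. So I would fix $N\in\mathcal N$ and an arbitrary $N''\in D_{\nu'}$ with $N''\subset N$; writing $N'' = B\Delta M$ with $B\in D_\mu$ and $M\in\mathcal N'$ (so $\nu'(N'') = \mu(B)$), the goal is to show $\mu(B)=0$. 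The key computation is that $B\setminus N\subset M$ (from $B\setminus M\subset N''\subset N$), that $N\cap B\in\mathcal N$ by Lemma~\ref{l_cring} applied to $\nu$, hence $B\setminus N = B\Delta(N\cap B)\in D_\nu$ with $\nu(B\setminus N) = \mu(B)$, and finally that $B\setminus N\in\mathcal N'$ by Lemma~\ref{l_cring} applied to $\nu'$ (since $B\setminus N\subset M\in\mathcal N'$). Then $B\setminus N\in\mathcal N' = \mathcal N_{\nu'}$ forces $\nu'(B\setminus N) = 0$, while the extension property gives $\nu'(B\setminus N) = \nu(B\setminus N) = \mu(B)$; hence $\mu(B)=0$ and $\nu'(N'')=0$. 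As $N''$ was an arbitrary $D_{\nu'}$-subset of $N$, this yields $N\in\mathcal N_{\nu'} = \mathcal N'$, i.e.\ $\mathcal N\subset\mathcal N'$.

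Finally, for the equality statement I would proceed as follows. If $\mathcal N' = \mathcal N$, then $\nu' = \mathfrak C_{\mathcal N'}\mu = \mathfrak C_{\mathcal N}\mu = \nu$ trivially, since the $\mathcal N$-completion depends only on the completion ring. Conversely, if $\nu' = \nu$ then $\mathcal N = \mathcal N_\nu = \mathcal N_{\nu'} = \mathcal N'$ by Theorem~\ref{p_Ncompl}; alternatively, $\nu' = \nu$ means each of $\nu,\nu'$ extends the other, so the extension part already proved yields $\mathcal N\subset\mathcal N'$ and $\mathcal N'\subset\mathcal N$.
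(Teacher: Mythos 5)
Your proof is correct and takes essentially the same route as the paper's: the ``if'' direction via Proposition~\ref{p_trans} and Theorem~\ref{p_Ncompl}, the equality claim by mutual extension, and the ``only if'' direction by fixing $N\in\mathcal N$, testing an arbitrary $N''\in D_{\nu'}$ with $N''\subset N$ against a representative $B\in D_\mu$, and invoking Lemma~\ref{l_cring} together with $\mathcal N_\nu=\mathcal N$ and $\mathcal N_{\nu'}=\mathcal N'$ (the paper works with $C=N\cap B$ and the fact that $\nu'$ admits completion, you with $B\setminus N=B\Delta(N\cap B)$, which is the same computation in different packaging). One small correction: in this section $\mu$ is merely a map admitting completion, so its values need not lie in a group and ``$\mu(B)=0$'' should read $\mu(B)=\mu(\varnothing)$ (likewise $\nu'(N'')=\nu'(\varnothing)$); with this substitution your chain of equalities goes through verbatim.
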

	\begin{proof}
		If $\mathcal N\subset\mathcal N'$, then it follows from Proposition~\ref{p_trans} that $\mathcal N'$ is a completion ring for $\nu$ and $\nu' = \mathfrak C_{\mathcal N'}\nu$ and, therefore, $\nu'$ is an extension of $\nu$ by Theorem~\ref{p_Ncompl}. Let $\nu'$ be an extension of $\nu$ and $N\in\mathcal N$. Then $N\in D_\nu$ and, hence, $N\in D_{\nu'}$. Let $N'\in D_{\nu'}$ be such that $N'\subset N$. Let $B\in D_\mu$ be such that $N'\Delta B\in\mathcal N'$. Since $\mathcal N$ is a completion ring for $\mu$, the set $C = N\cap B$ belongs to $\mathcal N$. As $N'\Delta C \subset N'\Delta B$ and $N'\Delta C\in D_{\nu'}$, Lemma~\ref{l_cring} implies that $N'\Delta C\in\mathcal N'$. Hence, $\nu'(N') = \nu'(C) = \nu(C) = \nu(\varnothing) = \nu'(\varnothing)$ because $\nu'$ admits completion, $\mathcal N_{\nu'} = \mathcal N'$, and $\mathcal N_\nu = \mathcal N$ by Theorem~\ref{p_Ncompl}. Thus, $N\in \mathcal N_{\nu'}$ and, therefore, $N\in \mathcal N'$. This means that $\mathcal N\subset \mathcal N'$. If $\nu' = \nu$, then $\nu'$ and $\nu$ are extensions of each other and, hence, $\mathcal N' = \mathcal N$.   
	\end{proof}

	In Section~\ref{s5}, we shall need additional regularity conditions on $\mathcal N$ to ensure that the $\mathcal N$-completion of a $\sigma$-content remains $\sigma$-additive. We conclude this section by considering such conditions in the general setting of maps admitting completion.  
	
	\begin{definition}\label{d_reg}
		Let $\mu$ be a map such that $\varnothing\in D_\mu$. We say that a set $\mathcal N$ is $\mu$-regular if $\mathcal N\cap \hat D_\mu\subset \hat{\mathcal N}_\mu$.
	\end{definition}
	
	\begin{proposition}\label{p_reg}
		Let $\mathcal N$ be a completion ring for $\mu$. Let $A\in \Delta(D_\mu,\mathcal N)$ and $\mathcal N$ be $\mu$-regular.
		\begin{enumerate}
			\item[(i)] There is $B\in D_\mu$ such that $B\subset A$ and $A\setminus B\in \mathcal N$.
			\item[(ii)] Let $A\in \hat D_\mu$. Then there is $B\in D_\mu$ such that $A\subset B$ and $B\setminus A\in\mathcal N$.
		\end{enumerate}
	\end{proposition}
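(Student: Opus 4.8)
The plan is to work with $\nu = \mathfrak C_{\mathcal N}\mu$, for which $D_\nu = \Delta(D_\mu,\mathcal N)$ by Lemma~\ref{l_compl}(ii), so that the hypothesis reads $A\in D_\nu$. The workhorse throughout is Lemma~\ref{l_cring}: for $A\in D_\nu$ and $N\in\mathcal N$ one has $N\cap A\in\mathcal N$, and any element of $D_\nu$ contained in some member of $\mathcal N$ already lies in $\mathcal N$. The $\mu$-regularity hypothesis will be used to pass from a set in $\mathcal N\cap\hat D_\mu$ to an element of $\mathcal N_\mu\subset D_\mu$ containing it; this is precisely what keeps the sets produced by the construction inside the ring $D_\mu$ rather than merely inside $D_\nu$.

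For (i), I would first pick $B_0\in D_\mu$ with $N:=A\Delta B_0\in\mathcal N$. A direct computation gives $A\setminus B_0=A\cap N$ and $B_0\setminus A=B_0\cap N$, so both belong to $\mathcal N$ by Lemma~\ref{l_cring} (note $B_0\in D_\mu\subset D_\nu$). The set $M:=B_0\setminus A$ satisfies $M\subset B_0\in D_\mu$, hence $M\in\mathcal N\cap\hat D_\mu\subset\hat{\mathcal N}_\mu$ by $\mu$-regularity; thus $M\subset M'$ for some $M'\in\mathcal N_\mu\subset D_\mu$. I then set $B:=B_0\setminus M'\in D_\mu$. Since $M\subset M'$, we get $B\subset B_0\setminus M=B_0\cap A\subset A$, giving the required inclusion. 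Finally the identity $A\setminus B=(A\setminus B_0)\cup(A\cap M')$ exhibits $A\setminus B$ as a union of two members of $\mathcal N$ (the second because $M'\in\mathcal N_\mu\subset\mathcal N$ and $A\in D_\nu$), so $A\setminus B\in\mathcal N$ as $\mathcal N$ is a ring.

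For (ii), the extra hypothesis is $A\in\hat D_\mu$, say $A\subset C\in D_\mu$. I would apply part (i) to obtain $B_1\in D_\mu$ with $B_1\subset A$ and $A\setminus B_1\in\mathcal N$. Then $A\setminus B_1\subset C$ places $A\setminus B_1$ in $\mathcal N\cap\hat D_\mu\subset\hat{\mathcal N}_\mu$, so $A\setminus B_1\subset P$ for some $P\in\mathcal N_\mu\subset D_\mu$. Taking $B:=B_1\cup P\in D_\mu$, the decomposition $A=B_1\cup(A\setminus B_1)\subset B_1\cup P=B$ gives $A\subset B$, while $B_1\subset A$ forces $B\setminus A=P\setminus A\subset P$; since $B\setminus A\in D_\nu$ and $P\in\mathcal N$, the second assertion of Lemma~\ref{l_cring} yields $B\setminus A\in\mathcal N$.

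I expect the set-theoretic manipulations to be entirely routine; the only genuinely delicate point is the bookkeeping of membership. Because $A$ itself need not lie in $D_\mu$, the naive choices $B_0\cap A$ in (i) and $B_1\cup(A\setminus B_1)$ in (ii) are generally not in $D_\mu$, and the role of $\mu$-regularity is exactly to substitute the honest null sets $M'$ and $P$ before the ring operations are performed, without disturbing the inclusions or the negligibility of the leftover differences.
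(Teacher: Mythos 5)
Your proof is correct. Part (i) is, up to notation, exactly the paper's argument: you choose $B_0\in D_\mu$ with $A\Delta B_0\in\mathcal N$, observe that $A\setminus B_0$ and $B_0\setminus A$ lie in $\mathcal N$, use $\mu$-regularity to cover $B_0\setminus A$ by some $M'\in\mathcal N_\mu$, and set $B=B_0\setminus M'$; the paper does precisely this with $C$ and $N$ in place of $B_0$ and $M'$, arriving at the same identity $A\setminus B=(A\setminus B_0)\cup(M'\cap A)$. In part (ii) you take a genuinely (if mildly) different route. The paper applies (i) to the relative complement $C\setminus A$, which belongs to $\Delta(D_\mu,\mathcal N)$ because that set is a ring, obtaining $B'\subset C\setminus A$ with $C\setminus(A\cup B')\in\mathcal N$, and then sets $B=C\setminus B'$; in this dualized argument $\mu$-regularity enters only through the internal call to (i). You instead apply (i) directly to $A$ to get an inner approximation $B_1\subset A$ with $A\setminus B_1\in\mathcal N$, and then invoke $\mu$-regularity a second time to cover $A\setminus B_1\in\mathcal N\cap\hat D_\mu$ by some $P\in\mathcal N_\mu$, setting $B=B_1\cup P$. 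Both arguments are valid; the paper's version is marginally more economical (a single use of regularity, no second null cover), while yours is more symmetric and transparent --- an inner $D_\mu$-approximation enlarged by an honest $\mu$-null set --- and it makes explicit where regularity is needed in each direction. Your membership bookkeeping is sound throughout, in particular the step where you verify $B\setminus A\in D_\nu$ before applying the second assertion of Lemma~\ref{l_cring}, and the inclusion $\mathcal N_\mu\subset\mathcal N$, which you use twice, does follow from $\mathcal N\cap D_\mu=\mathcal N_\mu$ in Definition~\ref{d_cring}.
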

	\begin{proof}
		(i) Let $C\in D_\mu$ be such that $A\Delta C\in \mathcal N$. The sets $A\setminus C$ and $C\setminus A$ belong to $\Delta(D_\mu,\mathcal N)$ because $\Delta(D_\mu,\mathcal N)$ is a ring by Lemma~\ref{l_DeltaQN}. As $A\setminus C$ and $C\setminus A$ are subsets of $A\Delta C$, it follows from Lemma~\ref{l_DeltaQN} that $A\setminus C$ and $C\setminus A$ belong to $\mathcal N$. Since $C\setminus A\in \hat D_\mu$ and $\mathcal N$ is $\mu$-regular, we have $C\setminus A\in \hat{\mathcal N}_\mu$. Let $N\in \mathcal N_\mu$ be such that $C\setminus A\subset N$. We set $B = C\setminus N$. Then $B\in D_\mu$ and $A\setminus B = (A\setminus C)\cup (N\cap A)$. This implies that $A\setminus B\in\mathcal N$ because $N\cap A\in\mathcal N$ by Lemma~\ref{l_DeltaQN}. Since $B\setminus A = (C\setminus A)\setminus N = \varnothing$, we have $B\subset A$.
		\par\medskip\noindent
		(ii) Let $C\in D_\mu$ be such that $A\subset C$. We have $C\setminus A\in \Delta(D_\mu,\mathcal N)$ because $\Delta(D_\mu,\mathcal N)$ is a ring. By~(i), there is $B'\in D_\mu$ such that $B'\subset C\setminus A$ and the set $N = C\setminus (A\cup B')$ belongs to $\mathcal N$. Let $B = C\setminus B'$. Then $B\in D_\mu$ and $A\subset B$. Since $B\setminus A = N$, we have $B\setminus A\in\mathcal N$.
	\end{proof}
	
	The hypothesis about the $\mu$-regularity of $\mathcal N$ in Proposition~\ref{p_reg} cannot be drop\-ped. For example, let $S = \{0,1\}$ and the map $\mu$ be such that $D_\mu = \{\varnothing,S\}$, $\mu(\varnothing) = 0$, and $\mu(S) = 1$. Then $\mu$ is a positive measure. Clearly, $\mathcal N = \{\varnothing,\{0\}\}$ is a completion ring for $\mu$ and statements~(i) and~(ii) of Proposition~\ref{p_reg} do not hold for $A = \{1\}$ and $A = \{0\}$ respectively.

\section{Natural completion procedures}
\label{s4}

	Given a map $\mu$ such that $\varnothing\in D_\mu$, we define the sets $\mathcal N_0[\mu]$ and $\mathcal N[\mu]$ by equalities~(\ref{N0mu}) and~(\ref{Nmu}) respectively. Clearly, $\mathcal N[\mu]$ is a hereditary set.
	
	Let $\mu$ be a map admitting completion. By Proposition~\ref{p_Nring}, $\mathcal N_\mu$ is a ring and, hence, $\hat{\mathcal N}_\mu$ is a ring. This implies that $\mathcal N_0[\mu]$ and $\mathcal N[\mu]$ are rings. Clearly, $\mathcal N_\mu\subset \hat{\mathcal N}_\mu\subset \mathcal N[\mu]$ and $\hat{\mathcal N}_\mu\cap D_\mu\subset \mathcal N_\mu$. If $N\in \mathcal N[\mu]\cap D_\mu$, then $N\in \hat{\mathcal N}_\mu$ because $N = N\cap N$ and, hence, $N\in\mathcal N_\mu$. Thus, $\mathcal N_\mu = \hat{\mathcal N}_\mu\cap D_\mu = \mathcal N[\mu]\cap D_\mu$. Since $\mathcal N_\mu\subset \mathcal N_0[\mu]\subset \mathcal N[\mu]$, it follows that $\mathcal N_\mu = \mathcal N_0[\mu]\cap D_\mu$. If $B\in D_\mu$ and $N$ belongs to one of the sets $\hat{\mathcal N}_\mu$, $\mathcal N_0[\mu]$, and $\mathcal N[\mu]$, then $N\cap B$ obviously belongs to the same set. Thus, $\hat{\mathcal N}_\mu$, $\mathcal N_0[\mu]$, and $\mathcal N[\mu]$ are completion rings for $\mu$.  
	
	\begin{definition}
		Let $\mu$ be a map admitting completion. The $\mathcal N$-completions of $\mu$ with $\mathcal N$ equal to $\hat{\mathcal N}_\mu$, $\mathcal N_0[\mu]$, and $\mathcal N[\mu]$ are called the completion, saturation, and essential completion of $\mu$ and are denoted by $\mathfrak C\mu$, $\mathfrak S\mu$, and $\overline{\mathfrak C}\mu$ respectively.
	\end{definition}
	
	\begin{proposition}\label{p_complete}
		Let $\mu$ be a map admitting completion. Then $S[\mathfrak C\mu] = S[\mathfrak S\mu] = S[\overline{\mathfrak C}\mu] = S[\mu]$, $\mathfrak C\mu$ and $\overline{\mathfrak C}\mu$ are complete maps, $\overline{\mathfrak C}\mu$ is an extension of both $\mathfrak C\mu$ and $\mathfrak S\mu$, and 
		\[
		\mu\mbox{ is complete}\Leftrightarrow \mu = \mathfrak C\mu\Leftrightarrow \mathfrak S\mu = \overline{\mathfrak C}\mu.
		\] 
	\end{proposition}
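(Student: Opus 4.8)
The plan is to read off all four assertions from Theorem~\ref{p_Ncompl} and Proposition~\ref{p_ext}, applied to the three completion rings $\hat{\mathcal N}_\mu$, $\mathcal N_0[\mu]$, and $\mathcal N[\mu]$, together with the inclusions $\mathcal N_\mu\subset\hat{\mathcal N}_\mu\subset\mathcal N[\mu]$ and $\mathcal N_\mu\subset\mathcal N_0[\mu]\subset\mathcal N[\mu]$ established just above. For the supports, Theorem~\ref{p_Ncompl} gives $S[\nu]=S[\mu]\cup\bigcup\mathcal N$ for $\nu=\mathfrak C_{\mathcal N}\mu$, so it suffices to observe that $\bigcup\mathcal N\subset S[\mu]$ in each of the three cases: every element of $\hat{\mathcal N}_\mu$ is contained in some element of $\mathcal N_\mu\subset D_\mu$ and hence in $S[\mu]$, while every element of $\mathcal N_0[\mu]$ or $\mathcal N[\mu]$ is by definition a subset of $S[\mu]$. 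This yields $S[\mathfrak C\mu]=S[\mathfrak S\mu]=S[\overline{\mathfrak C}\mu]=S[\mu]$.

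For completeness and the extension claim, Theorem~\ref{p_Ncompl} shows that each of $\mathfrak C\mu$, $\mathfrak S\mu$, $\overline{\mathfrak C}\mu$ admits completion with $\mathcal N_{\mathfrak C\mu}=\hat{\mathcal N}_\mu$, $\mathcal N_{\mathfrak S\mu}=\mathcal N_0[\mu]$, and $\mathcal N_{\overline{\mathfrak C}\mu}=\mathcal N[\mu]$. Since $\hat{\mathcal N}_\mu$ and $\mathcal N[\mu]$ are hereditary (the former by the definition of the hat operation, the latter as already noted), Definition~\ref{dd7} gives that $\mathfrak C\mu$ and $\overline{\mathfrak C}\mu$ are complete. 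The extension statement then follows from Proposition~\ref{p_ext}: as $\hat{\mathcal N}_\mu\subset\mathcal N[\mu]$ and $\mathcal N_0[\mu]\subset\mathcal N[\mu]$, the map $\overline{\mathfrak C}\mu$ is an extension of both $\mathfrak C\mu$ and $\mathfrak S\mu$.

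It remains to prove the chain of equivalences. By Theorem~\ref{p_Ncompl}, $\mathfrak C\mu=\mu$ iff $\hat{\mathcal N}_\mu=\mathcal N_\mu$, and the latter holds iff $\mathcal N_\mu$ is hereditary, i.e., iff $\mu$ is complete; this gives the first equivalence. By Proposition~\ref{p_ext}, $\mathfrak S\mu=\overline{\mathfrak C}\mu$ iff $\mathcal N_0[\mu]=\mathcal N[\mu]$, so I must show that this equality of rings is itself equivalent to completeness of $\mu$. If $\mu$ is complete then $\mathcal N_\mu=\hat{\mathcal N}_\mu$, so the defining conditions for $\mathcal N_0[\mu]$ and $\mathcal N[\mu]$ coincide and the two rings are equal. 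The converse is the only step requiring a genuine argument and is the crux of the proof: assuming $\mathcal N_0[\mu]=\mathcal N[\mu]$, I take an arbitrary $N\in\hat{\mathcal N}_\mu$ and a null set $N_0\in\mathcal N_\mu$ with $N\subset N_0$; since $\hat{\mathcal N}_\mu$ is hereditary we have $N\cap B\subset N\in\hat{\mathcal N}_\mu$ for all $B\in D_\mu$ and $N\subset N_0\subset S[\mu]$, so $N\in\mathcal N[\mu]=\mathcal N_0[\mu]$, whence $N\cap B\in\mathcal N_\mu$ for every $B\in D_\mu$. Choosing $B=N_0\in D_\mu$ yields $N=N\cap N_0\in\mathcal N_\mu$, so $\hat{\mathcal N}_\mu\subset\mathcal N_\mu$; combined with the reverse inclusion this gives $\mathcal N_\mu=\hat{\mathcal N}_\mu$, i.e., $\mu$ is complete. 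The main obstacle is precisely this last step, where the test set $B=N_0$ must be chosen to promote the negligible set $N$ to a genuine null set.
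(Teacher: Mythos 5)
Your proposal is correct and follows essentially the same route as the paper: supports and completeness are read off from Theorem~\ref{p_Ncompl}, the extension claim from Proposition~\ref{p_ext}, and the only nontrivial step---showing $\mathcal N_0[\mu]=\mathcal N[\mu]$ forces $\mathcal N_\mu$ to be hereditary---is argued exactly as in the paper, by placing a negligible set $N\subset N_0\in\mathcal N_\mu$ into $\mathcal N[\mu]=\mathcal N_0[\mu]$ and then testing against $B=N_0$ to obtain $N=N\cap N_0\in\mathcal N_\mu$.
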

	\begin{proof}
		Let $\nu = \mathfrak C\mu$, $\mu' = \mathfrak S\mu$, and $\nu' = \overline{\mathfrak C}\mu$.
		Since $\hat{\mathcal N}_\mu$ and $\mathcal N_0[\mu]$ are subsets of $\mathcal N[\mu]$, it follows from Proposition~\ref{p_ext} that $\nu'$ is an extension of both $\nu$ and $\mu'$. As $\bigcup \mathcal N[\mu]\subset S[\mu]$, Theorem~\ref{p_Ncompl} implies that $S[\nu] = S[\mu'] = S[\nu'] = S[\mu]$. By Theorem~\ref{p_Ncompl}, the maps $\nu$ and $\nu'$ admit completion and we have $\mathcal N_\nu = \hat{\mathcal N}_\mu$ and $\mathcal N_{\nu'} = \mathcal N[\mu]$. Since $\hat{\mathcal N}_\mu$ and $\mathcal N[\mu]$ are hereditary sets, we conclude that $\nu$ and $\nu'$ are complete. Since $\mu = \mathfrak C_{\mathcal N_\mu}\mu$ by Theorem~\ref{p_Ncompl}, it follows from Proposition~\ref{p_ext} that $\mu = \nu$ if and only if $\mathcal N_\mu = \hat{\mathcal N}_\mu$ and, hence, if and only if $\mu$ is complete. By Proposition~\ref{p_ext}, $\mu' = \nu'$ if and only if 
		\begin{equation}\label{N0=N}
		\mathcal N_0[\mu] = \mathcal N[\mu].
		\end{equation}
		If $\mu$ is complete, then $\mathcal N_\mu$ is hereditary and (\ref{N0=N}) obviously holds. Conversely, let (\ref{N0=N}) be valid. Let $N\in\mathcal N_\mu$ and $N'\subset N$. Then $N'\in\mathcal N[\mu]$ and, hence, $N'\in\mathcal N_0[\mu]$. As $N' = N'\cap N$, this implies that $N'\in\mathcal N_\mu$, i.e., $\mathcal N_\mu$ is hereditary and $\mu$ is complete. Thus, (\ref{N0=N}) is equivalent to the completeness of $\mu$.       
	\end{proof}
	
	\begin{lemma}\label{l_esscompl}
		Let $\mathcal N$ be a completion ring for $\mu$ and $\nu = \mathfrak C_{\mathcal N}\mu$. If $\mathcal N\subset \mathcal N[\mu]$, then $S[\nu] = S[\mu]$, $\mathcal N[\nu] = \mathcal N[\mu]$, and $\overline{\mathfrak C}\nu = \overline{\mathfrak C}\mu$. 
	\end{lemma}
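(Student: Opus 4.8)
The plan is to dispatch the three assertions in turn, leaning on Theorem~\ref{p_Ncompl} for the first, a direct two-inclusion argument for the second, and the transitivity of Proposition~\ref{p_trans} for the third. I would begin by recording the facts furnished by Theorem~\ref{p_Ncompl} that will be reused throughout: $D_\nu = \kappa(D_\mu\cup\mathcal N) = \Delta(D_\mu,\mathcal N)$, $\mathcal N_\nu = \mathcal N$, $S[\nu] = S[\mu]\cup\bigcup\mathcal N$, and $D_\mu\subset D_\nu$; I also note $\mathcal N_\mu\subset\mathcal N$ since $\mathcal N$ is a completion ring, so $\mathcal N_\mu = \mathcal N\cap D_\mu$. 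For the first claim, every element of $\mathcal N[\mu]$ is by definition a subset of $S[\mu]$, so the hypothesis $\mathcal N\subset\mathcal N[\mu]$ forces $\bigcup\mathcal N\subset S[\mu]$, and therefore $S[\nu] = S[\mu]$.

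The technical heart is the equality $\mathcal N[\nu] = \mathcal N[\mu]$, which I would prove by two inclusions, noting first that both sets are subsets of $S[\nu] = S[\mu]$. For $\mathcal N[\mu]\subset\mathcal N[\nu]$, take $N\in\mathcal N[\mu]$ and an arbitrary $B'\in D_\nu$; writing $B' = B\Delta M$ with $B\in D_\mu$ and $M\in\mathcal N$, distributivity~(\ref{distr}) gives $N\cap B' = (N\cap B)\Delta(N\cap M)$. Here $N\cap B\in\hat{\mathcal N}_\mu$, so it is covered by some element of $\mathcal N_\mu\subset\mathcal N$, while $N\cap M\subset M\in\mathcal N$; since $\mathcal N$ is a ring and $N\cap B'\subset(N\cap B)\cup(N\cap M)$, the set $N\cap B'$ is covered by an element of $\mathcal N = \mathcal N_\nu$, so $N\cap B'\in\hat{\mathcal N}_\nu$ and hence $N\in\mathcal N[\nu]$.

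For the reverse inclusion $\mathcal N[\nu]\subset\mathcal N[\mu]$, take $N\in\mathcal N[\nu]$ and $B\in D_\mu\subset D_\nu$; then $N\cap B\in\hat{\mathcal N}_\nu$, so $N\cap B\subset M$ for some $M\in\mathcal N_\nu = \mathcal N$, whence $N\cap B\subset M\cap B$. Because $\mathcal N\subset\mathcal N[\mu]$, the definition of $\mathcal N[\mu]$ gives $M\cap B\in\hat{\mathcal N}_\mu$, and hereditariness of $\hat{\mathcal N}_\mu$ yields $N\cap B\in\hat{\mathcal N}_\mu$, so $N\in\mathcal N[\mu]$. This establishes $\mathcal N[\nu] = \mathcal N[\mu]$. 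The third claim then follows formally: by definition $\overline{\mathfrak C}\nu = \mathfrak C_{\mathcal N[\nu]}\nu = \mathfrak C_{\mathcal N[\mu]}\nu$, and since $\mathcal N[\mu]$ is a completion ring for $\mu$ with $\mathcal N\subset\mathcal N[\mu]$, Proposition~\ref{p_trans}(i) makes $\mathcal N[\mu]$ a completion ring for $\nu$, whereupon Proposition~\ref{p_trans}(ii) gives $\mathfrak C_{\mathcal N[\mu]}\nu = \mathfrak C_{\mathcal N[\mu]}\mu = \overline{\mathfrak C}\mu$.

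I expect the main obstacle to be the second step, and within it the correct placement of the hypothesis $\mathcal N\subset\mathcal N[\mu]$: it is used precisely in the $\mathcal N[\nu]\subset\mathcal N[\mu]$ direction to absorb the ``$\mathcal N$-part'' $M\cap B$ into $\hat{\mathcal N}_\mu$. One must keep straight that $\hat{\mathcal N}_\nu$ is the hereditary closure of $\mathcal N$ (via $\mathcal N_\nu = \mathcal N$) and that $\hat{\mathcal N}_\mu\subset\hat{\mathcal N}_\nu$, and use the ring structure of $\mathcal N$ to cover symmetric differences. Once this inclusion bookkeeping is carried out, the remaining claims are immediate consequences of Theorem~\ref{p_Ncompl} and Proposition~\ref{p_trans}.
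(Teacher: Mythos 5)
Your proof is correct and follows essentially the same route as the paper: the first claim from $S[\nu]=S[\mu]\cup\bigcup\mathcal N$ with $\bigcup\mathcal N[\mu]\subset S[\mu]$, the equality $\mathcal N[\nu]=\mathcal N[\mu]$ by the same two-inclusion bookkeeping with $\mathcal N_\nu=\mathcal N$ and hereditariness, and the final equality via the transitivity of Proposition~\ref{p_trans}. The only cosmetic differences are that the paper proves $\mathcal N[\nu]\subset\mathcal N[\mu]$ by first noting $\hat{\mathcal N}_\nu\subset\mathcal N[\mu]$ where you pass through an explicit covering element, and it applies Proposition~\ref{p_trans} in the direction $\nu\to\mu$ where you go $\mu\to\nu$; since part~(i) of that proposition is an equivalence, these are interchangeable.
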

	\begin{proof}
		By Theorem~\ref{p_Ncompl}, we have $\mathcal N_\nu = \mathcal N$ and $S[\nu] = S[\mu]\cup\bigcup\mathcal N$. Since $\mathcal N\subset \mathcal N[\mu]$ and $\mathcal N[\mu]$ is hereditary, it follows that $\hat{\mathcal N}_\nu\subset \mathcal N[\mu]$ and $S[\nu] = S[\mu]$. Let $N\in \mathcal N[\nu]$. Then $N\subset S[\nu]$ and, hence, $N\subset S[\mu]$. Let $B\in D_\mu$. Since $B\in D_\nu$, we have $N\cap B\in \hat{\mathcal N}_\nu$ and, therefore, $N\cap B\in \mathcal N[\mu]$. As $N\cap B = (N\cap B)\cap B$, we obtain $N\cap B\in \hat{\mathcal N}_\mu$. This implies that $N\in\mathcal N[\mu]$, i.e., $\mathcal N[\nu] \subset \mathcal N[\mu]$. Let $N\in \mathcal N[\mu]$. Then $N\subset S[\mu]$ and, hence, $N\subset S[\nu]$. Let $A\in D_\nu$ and let $B\in D_\mu$ be such that $A\Delta B\in \mathcal N$. Then $N\cap B\in \hat{\mathcal N}_\mu$ and, therefore, $N\cap B\in \hat{\mathcal N}_\nu$ because $\mathcal N_\mu\subset \mathcal N$. Since $N\cap A\subset (N\cap B)\cup (A\Delta B)$, we conclude that $N\cap A\in \hat{\mathcal N}_\nu$. This implies that $N\in\mathcal N[\nu]$, i.e., $\mathcal N[\mu] \subset \mathcal N[\nu]$. Thus, $\mathcal N[\nu] = \mathcal N[\mu]$. Since $\mathcal N[\nu]$ is a completion ring for $\nu$, it follows from Proposition~\ref{p_trans} that $\mathcal N[\nu]$ is a completion ring for $\mu$ and $\overline{\mathfrak C}\nu = \mathfrak C_{\mathcal N[\nu]}\mu$. Hence, $\overline{\mathfrak C}\nu = \overline{\mathfrak C}\mu$.
	\end{proof}
	
\begin{lemma}\label{l_compl1}
	Let $\mathcal N$ be a completion ring for $\mu$, $\mathcal N\subset\mathcal N_0[\mu]$, and $\nu = \mathfrak C_{\mathcal N}\mu$. Then 
	$\mathcal N[\nu] = \mathcal N[\mu]$ and 
	\begin{equation}\label{N0}
	\mathcal N_0[\nu] = \{N\in \mathcal N_0[\mu]: N\cap N'\in \mathcal N\mbox{ for every }N'\in\mathcal N\}.
	\end{equation}
\end{lemma}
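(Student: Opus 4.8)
The plan is to dispatch the two assertions separately. The equality $\mathcal N[\nu] = \mathcal N[\mu]$ requires essentially no new work. Since $\mathcal N\subset\mathcal N_0[\mu]$ by hypothesis and $\mathcal N_0[\mu]\subset\mathcal N[\mu]$ (the latter because $\mathcal N_\mu\subset\hat{\mathcal N}_\mu$), the hypothesis $\mathcal N\subset\mathcal N[\mu]$ of Lemma~\ref{l_esscompl} holds, and that lemma yields $S[\nu] = S[\mu]$ and $\mathcal N[\nu] = \mathcal N[\mu]$ simultaneously. I would record $S[\nu]=S[\mu]$ and $\mathcal N_\nu=\mathcal N$ (from Theorem~\ref{p_Ncompl}) as the standing facts used below, and note that $\mathcal N\subset D_\nu$ also comes from Theorem~\ref{p_Ncompl}.

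For the identity~(\ref{N0}), write $\mathcal M$ for its right-hand side. Unwinding the definition of $\mathcal N_0[\nu]$ using $S[\nu]=S[\mu]$ and $\mathcal N_\nu=\mathcal N$, membership $N\in\mathcal N_0[\nu]$ amounts to $N\subset S[\mu]$ together with $N\cap B\in\mathcal N$ for every $B\in D_\nu$. The central idea is to split the quantifier over $D_\nu$ by means of the representation $D_\nu=\Delta(D_\mu,\mathcal N)$ furnished by Theorem~\ref{p_Ncompl} and Lemma~\ref{l_DeltaQN}: every $B\in D_\nu$ has the form $B = B_0\Delta N_0$ with $B_0\in D_\mu$ and $N_0\in\mathcal N$, and distributivity~(\ref{distr}) gives $N\cap B = (N\cap B_0)\Delta(N\cap N_0)$. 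Since $\mathcal N$ is a ring, the condition $N\cap B\in\mathcal N$ for all $B\in D_\nu$ is therefore equivalent to the conjunction of $N\cap B_0\in\mathcal N$ for all $B_0\in D_\mu$ and $N\cap N_0\in\mathcal N$ for all $N_0\in\mathcal N$. This decomposition matches the two clauses defining $\mathcal M$.

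It then remains to reconcile the first clause, $N\cap B\in\mathcal N$ for $B\in D_\mu$, with the defining condition $N\cap B\in\mathcal N_\mu$ of $\mathcal N_0[\mu]$. This is where I expect the only genuine subtlety, and it is exactly where the hypothesis $\mathcal N\subset\mathcal N_0[\mu]$ is used. For the inclusion $\mathcal N_0[\nu]\subset\mathcal M$, given $N\cap B\in\mathcal N$ with $B\in D_\mu$, I would invoke $\mathcal N\subset\mathcal N_0[\mu]$ to deduce $N\cap B\in\mathcal N_0[\mu]$ and then intersect with $B$ itself: $(N\cap B)\cap B = N\cap B\in\mathcal N_\mu$, thereby upgrading $\mathcal N$-membership to $\mathcal N_\mu$-membership and placing $N$ in $\mathcal N_0[\mu]$; the second clause $N\cap N'\in\mathcal N$ for $N'\in\mathcal N$ is read off directly by taking $B=N'\in D_\nu$. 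For the reverse inclusion $\mathcal M\subset\mathcal N_0[\nu]$, one has $N\subset S[\mu]=S[\nu]$ since $N\in\mathcal N_0[\mu]$, while the first clause of $\mathcal M$ gives $N\cap B_0\in\mathcal N_\mu\subset\mathcal N$ and the second gives $N\cap N_0\in\mathcal N$; the displayed decomposition then shows $N\cap B\in\mathcal N$ for every $B\in D_\nu$, i.e.\ $N\in\mathcal N_0[\nu]$. Combining the two inclusions yields~(\ref{N0}).
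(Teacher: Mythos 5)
Your proof is correct and takes essentially the same route as the paper's: Lemma~\ref{l_esscompl} (applicable since $\mathcal N\subset\mathcal N_0[\mu]\subset\mathcal N[\mu]$) gives $\mathcal N[\nu]=\mathcal N[\mu]$ and $S[\nu]=S[\mu]$, and then the representation $B=B_0\Delta N_0$ with~(\ref{distr}), the inclusion $\mathcal N_\mu\subset\mathcal N$, and the upgrade $(N\cap B)\cap B=N\cap B\in\mathcal N_\mu$ via $\mathcal N\subset\mathcal N_0[\mu]$ are exactly the paper's steps. Your only difference is organizational: you package the two inclusions as a single quantifier-splitting equivalence over $D_\nu=\Delta(D_\mu,\mathcal N)$ (legitimate, since you record $D_\mu\subset D_\nu$ and $\mathcal N\subset D_\nu$), which the paper instead verifies inline for each inclusion.
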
	 
\begin{proof}
	Since $\mathcal N_0[\mu]\subset \mathcal N[\mu]$, we have $\mathcal N\subset\mathcal N[\mu]$ and Lemma~\ref{l_esscompl} implies that $\mathcal N[\nu] = \mathcal N[\mu]$ and $S[\nu]=S[\mu]$. Let $\mathcal K$ denote the set in the right-hand side of~(\ref{N0}). Let $N\in\mathcal K$. Since $N\in\mathcal N_0[\mu]$, we have $N\subset S[\mu]$ and, hence, $N\subset S[\nu]$. Let $A\in D_\nu$ and let $B\in D_\mu$ and $N'\in \mathcal N$ be such that $A = B\Delta N'$. By~(\ref{distr}), we have $N\cap A = (N\cap B)\Delta(N\cap N')$. It follows that $N\cap A\in\mathcal N$ because $N\cap B\in\mathcal N_\mu$, $N\cap N'\in\mathcal N$, and $\mathcal N_\mu\subset\mathcal N$. Since $\mathcal N_\nu = \mathcal N$ by Theorem~\ref{p_Ncompl}, we conclude that $N\in\mathcal N_0[\nu]$, i.e., $\mathcal K\subset \mathcal N_0[\nu]$. Let $N\in\mathcal N_0[\nu]$. Then $N\subset S[\nu]$ and, hence, $N\subset S[\mu]$. If $B\in D_\mu$, then $N\cap B\in \mathcal N$ because $D_\mu\subset D_\nu$ and, therefore, $N\cap B\in \mathcal N_0[\mu]$. As $N\cap B = (N\cap B)\cap B$, it follows that $N\cap B\in \mathcal N_\mu$ and, hence, $N\in\mathcal N_0[\mu]$. Since $\mathcal N = \mathcal N_\nu$, we have $N\cap N'\in \mathcal N$ for every $N'\in\mathcal N$. Thus, $N\in\mathcal K$, i.e., $\mathcal N_0[\nu]\subset\mathcal K$.   
\end{proof}

	\begin{proposition}\label{p_nat}
		Let $\mu$ be a map admitting completion. Then $\mathfrak C\mathfrak C\mu = \mathfrak C\mu$, $\mathfrak S\mathfrak S\mu = \mathfrak S\mu$, $\overline{\mathfrak C}\mu$ is an extension of $\mathfrak C \mathfrak S\mu$, and
		\begin{equation}\nonumber
		\overline{\mathfrak C}\mu = \overline{\mathfrak C}\,\overline{\mathfrak C}\mu = \overline{\mathfrak C}\mathfrak C \mu = \overline{\mathfrak C}\mathfrak S \mu = \mathfrak C \overline{\mathfrak C}\mu = \mathfrak S \overline{\mathfrak C}\mu = \mathfrak S \mathfrak C\mu. 
		\end{equation}
		The equality $\overline{\mathfrak C}\mu = \mathfrak C\mathfrak S\mu$ holds if and only if $\mathcal N[\mu] = \widehat{\mathcal N_0[\mu]}$.
	\end{proposition}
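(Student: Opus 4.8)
The plan is to establish each of the asserted identities by combining the transitivity result (Proposition~\ref{p_trans}(ii)), the extension criterion (Proposition~\ref{p_ext}), and the two computational lemmas (Lemma~\ref{l_esscompl} and Lemma~\ref{l_compl1}) that track how the natural completion rings transform under completion. Throughout, let $\nu = \mathfrak C\mu$, $\mu' = \mathfrak S\mu$, and $\nu' = \overline{\mathfrak C}\mu$, corresponding to the completion rings $\hat{\mathcal N}_\mu \subset \mathcal N_0[\mu] \subset \mathcal N[\mu]$ (all contained in $\mathcal N[\mu]$, which is the key inclusion feeding Lemma~\ref{l_esscompl}).

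First I would verify the idempotency statements $\mathfrak C\mathfrak C\mu = \mathfrak C\mu$ and $\mathfrak S\mathfrak S\mu = \mathfrak S\mu$. For the former, Proposition~\ref{p_complete} already gives that $\nu = \mathfrak C\mu$ is complete, so $\mathcal N_\nu = \hat{\mathcal N}_\nu$ and applying $\mathfrak C$ again changes nothing; more formally, $\mathfrak C\nu = \mathfrak C_{\hat{\mathcal N}_\nu}\nu$ and $\hat{\mathcal N}_\nu = \mathcal N_\nu$ forces $\mathfrak C\nu = \nu$ via the last clause of Theorem~\ref{p_Ncompl}. For $\mathfrak S\mathfrak S\mu = \mathfrak S\mu$, I would apply Lemma~\ref{l_compl1} with $\mathcal N = \mathcal N_0[\mu]$: since $\mathcal N_0[\mu]$ is closed under intersection, the right-hand side of~(\ref{N0}) collapses to $\mathcal N_0[\mu]$ itself, giving $\mathcal N_0[\mu'] = \mathcal N_0[\mu] = \mathcal N_{\mu'}$, and the equality-criterion of Theorem~\ref{p_Ncompl} yields $\mathfrak S\mu' = \mu'$.

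Next I would treat the chain of seven equalities. The four identities whose outermost operation is $\overline{\mathfrak C}$ — namely $\overline{\mathfrak C}\,\overline{\mathfrak C}\mu$, $\overline{\mathfrak C}\mathfrak C\mu$, $\overline{\mathfrak C}\mathfrak S\mu$, all equal to $\overline{\mathfrak C}\mu$ — follow directly from Lemma~\ref{l_esscompl}: each inner completion uses a ring contained in $\mathcal N[\mu]$, so the conclusion $\overline{\mathfrak C}\nu = \overline{\mathfrak C}\mu$ (and likewise for $\mu'$ and $\nu'$) applies verbatim. The remaining three, $\mathfrak C\overline{\mathfrak C}\mu = \mathfrak S\overline{\mathfrak C}\mu = \overline{\mathfrak C}\mu$ and $\mathfrak S\mathfrak C\mu = \overline{\mathfrak C}\mu$, require identifying the relevant null-set rings after completion. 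For $\mathfrak C\overline{\mathfrak C}\mu$ and $\mathfrak S\overline{\mathfrak C}\mu$, I would use that $\nu' = \overline{\mathfrak C}\mu$ is complete (Proposition~\ref{p_complete}), so $\mathcal N_{\nu'} = \mathcal N[\mu]$ is hereditary and equals both $\hat{\mathcal N}_{\nu'}$ and $\mathcal N_0[\nu']$, whence applying $\mathfrak C$ or $\mathfrak S$ leaves $\nu'$ fixed. For $\mathfrak S\mathfrak C\mu = \overline{\mathfrak C}\mu$, I would compute $\mathcal N_0[\nu]$ via Lemma~\ref{l_compl1} with $\mathcal N = \hat{\mathcal N}_\mu$ and check it equals $\mathcal N[\mu]$, then invoke Proposition~\ref{p_ext} (or transitivity) to identify $\mathfrak S\nu$ with $\overline{\mathfrak C}\mu$.

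The hard part will be the final biconditional: $\overline{\mathfrak C}\mu = \mathfrak C\mathfrak S\mu$ if and only if $\mathcal N[\mu] = \widehat{\mathcal N_0[\mu]}$. Here I would compute the null-set ring of $\mathfrak C\mathfrak S\mu$ explicitly. By Proposition~\ref{p_complete}, $\mathfrak C\mathfrak S\mu$ is complete, so its null sets form $\hat{\mathcal N}_{\mu'}$ where $\mu' = \mathfrak S\mu$; and $\mathcal N_{\mu'} = \mathcal N_0[\mu]$ by Theorem~\ref{p_Ncompl}, so the null-set ring is precisely $\widehat{\mathcal N_0[\mu]}$. Since $\overline{\mathfrak C}\mu$ is the $\mathcal N[\mu]$-completion with $\mathcal N_{\overline{\mathfrak C}\mu} = \mathcal N[\mu]$, and both maps are complete extensions of $\mu$ built from completion rings contained in $\mathcal N[\mu]$, Proposition~\ref{p_ext} reduces the equality of the two maps to the equality of their null-set rings, i.e., to $\widehat{\mathcal N_0[\mu]} = \mathcal N[\mu]$. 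The delicate point I expect to verify carefully is that $\widehat{\mathcal N_0[\mu]}$ is genuinely a completion ring for $\mu$ contained in $\mathcal N[\mu]$ (so that Proposition~\ref{p_ext} is applicable with this ring against $\mathcal N[\mu]$), which I would check using $\mathcal N_0[\mu] \subset \mathcal N[\mu]$ together with the heredity of $\mathcal N[\mu]$.
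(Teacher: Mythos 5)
Your overall architecture matches the paper's proof (Lemma~\ref{l_esscompl} for the identities with $\overline{\mathfrak C}$ outermost, Lemma~\ref{l_compl1} for $\mathfrak S\mathfrak S\mu = \mathfrak S\mu$, Propositions~\ref{p_trans} and~\ref{p_ext} for the biconditional, which also delivers the extension claim), but your treatment of $\mathfrak S\mathfrak C\mu = \overline{\mathfrak C}\mu$ contains a genuine error: Lemma~\ref{l_compl1} hypothesizes $\mathcal N\subset\mathcal N_0[\mu]$, and the inclusion $\hat{\mathcal N}_\mu\subset\mathcal N_0[\mu]$ fails in general. Membership in $\mathcal N_0[\mu]$ requires $N\cap B\in\mathcal N_\mu\subset D_\mu$ for every $B\in D_\mu$, while a subset $N$ of a $\mu$-null set $N'$ need not satisfy $N\cap B\in D_\mu$ at all; in fact $\hat{\mathcal N}_\mu\subset\mathcal N_0[\mu]$ holds if and only if $\mu$ is complete (take $B = N'$ to see necessity), which is precisely the trivial case. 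Worse, if you applied formula~(\ref{N0}) anyway, the intersection condition is vacuous for the hereditary ring $\hat{\mathcal N}_\mu$, so the formula would output $\mathcal N_0[\mathfrak C\mu] = \mathcal N_0[\mu]$, which by Proposition~\ref{p_complete} equals the value $\mathcal N[\mu]$ you need only when $\mu$ is complete — so the lemma's conclusion, not merely its proof, breaks outside its hypothesis, and your intended check would fail. The same too-glib pattern appears in your claim that completeness of $\nu' = \overline{\mathfrak C}\mu$ by itself gives $\mathcal N_0[\nu'] = \mathcal N_{\nu'}$: completeness yields $\hat{\mathcal N}_{\nu'} = \mathcal N_{\nu'}$ but gives no upper bound on $\mathcal N_0[\nu']$; the conclusion is nevertheless true here because $\mathcal N_0[\nu']\subset\mathcal N[\nu'] = \mathcal N[\mu] = \mathcal N_{\nu'}$ by Lemma~\ref{l_esscompl}, an ingredient your justification did not invoke.

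Both problematic identities are one-liners from Proposition~\ref{p_complete}, which is the paper's route: for a complete map $\lambda$ one has $\mathfrak S\lambda = \overline{\mathfrak C}\lambda$, and since $\mathfrak C\mu$ and $\overline{\mathfrak C}\mu$ are complete, $\mathfrak S\mathfrak C\mu = \overline{\mathfrak C}\mathfrak C\mu$ and $\mathfrak S\overline{\mathfrak C}\mu = \overline{\mathfrak C}\,\overline{\mathfrak C}\mu$, both of which equal $\overline{\mathfrak C}\mu$ by the identities you already secured via Lemma~\ref{l_esscompl}. Alternatively, your computational route can be repaired by evaluating $\mathcal N_0[\mathfrak C\mu]$ by hand: $S[\mathfrak C\mu] = S[\mu]$, and writing $A = B\Delta N'$ with $B\in D_\mu$, $N'\in\hat{\mathcal N}_\mu$, one has $N\cap A\subset (N\cap B)\cup N'$, so $N\cap A\in\hat{\mathcal N}_\mu$ for all $A\in D_{\mathfrak C\mu}$ if and only if $N\cap B\in\hat{\mathcal N}_\mu$ for all $B\in D_\mu$; hence $\mathcal N_0[\mathfrak C\mu] = \mathcal N[\mu]$, after which Propositions~\ref{p_trans} and~\ref{p_ext} finish as you intended. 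The remainder of your proposal — the two idempotency arguments, the Lemma~\ref{l_esscompl} identities, and the biconditional via $\mathcal N_{\mathfrak C\mathfrak S\mu} = \widehat{\mathcal N_0[\mu]}$ (Theorem~\ref{p_Ncompl} applied to $\mathfrak S\mu$, plus Proposition~\ref{p_trans}) together with Proposition~\ref{p_ext}, including the check that $\widehat{\mathcal N_0[\mu]}\subset\mathcal N[\mu]$, which also yields that $\overline{\mathfrak C}\mu$ extends $\mathfrak C\mathfrak S\mu$ — is correct and coincides with the paper's argument.
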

	\begin{proof}
		Since $\hat{\mathcal N}_\mu$ and $\mathcal N_0[\mu]$ are subsets of $\mathcal N[\mu]$, Lemma~\ref{l_esscompl} implies that $\overline{\mathfrak C}\mu = \overline{\mathfrak C}\,\overline{\mathfrak C}\mu = \overline{\mathfrak C}\mathfrak C \mu = \overline{\mathfrak C}\mathfrak S \mu$. This implies that $\overline{\mathfrak C}\mathfrak C\mathfrak S\mu = \overline{\mathfrak C}\mu$ and, hence, $\overline{\mathfrak C}\mu$ is an extension of $\mathfrak C\mathfrak S\mu$. By Proposition~\ref{p_complete}, $\mathfrak C\mu$ and $\overline{\mathfrak C}\mu$ are complete. Applying the same proposition, we obtain $\mathfrak C\mathfrak C\mu = \mathfrak C\mu$, $\mathfrak C \overline{\mathfrak C}\mu = \overline{\mathfrak C}\mu$, $\mathfrak S \mathfrak C\mu = \overline{\mathfrak C}\mathfrak C\mu$, and $\mathfrak S\overline{\mathfrak C}\mu = \overline{\mathfrak C}\,\overline{\mathfrak C}\mu$. Since $\mathcal N_0[\mathfrak S\mu]$ is a completion ring for $\mathfrak S\mu$, it follows from Proposition~\ref{p_trans} that $\mathcal N_0[\mathfrak S\mu]$ is a completion ring for $\mu$ and $\mathfrak S\mathfrak S\mu = \mathfrak C_{\mathcal N_0[\mathfrak S\mu]}\mu$. By Lemma~\ref{l_compl1}, we have $\mathcal N_0[\mathfrak S\mu] = \mathcal N_0[\mu]$ and, hence, $\mathfrak S\mathfrak S\mu = \mathfrak S\mu$. Let $\mathcal N = \mathcal N_{\mathfrak S\mu}$. As $\hat{\mathcal N}$ is a completion ring for $\mathfrak S\mu$, it follows from Proposition~\ref{p_trans} that $\hat{\mathcal N}$ is a completion ring for $\mu$ and $\mathfrak C\mathfrak S\mu = \mathfrak C_{\hat{\mathcal N}}\mu$. By Proposition~\ref{p_ext}, we conclude that $\overline{\mathfrak C}\mu = \mathfrak C\mathfrak S\mu$ if and only if $\mathcal N[\mu] = \hat{\mathcal N}$. It remains to note that $\mathcal N = \mathcal N_0[\mu]$ by Theorem~\ref{p_Ncompl}.
	\end{proof}

	In general, $\overline{\mathfrak C}\mu$ and $\mathfrak C \mathfrak S\mu$ need not coincide (see Example~\ref{e1}). A sufficient condition for the equality $\overline{\mathfrak C}\mu = \mathfrak C \mathfrak S\mu$ will be given in Section~\ref{s6}.
		
\section{Completion of contents}
\label{s5}
	
	\begin{lemma}\label{l_partition}
		Let $\mathcal N$ be a completion ring for $\mu$ and $\fm{\mb A}{I}$ be a countable disjoint family. Let $\mb C\colon I\to D_\mu$ be such that $\mb A(i)\Delta\mb C(i)\in \mathcal N$ for every $i\in I$. Then there is a disjoint family $\mb B\colon I\to D_\mu$ such that $\bigcup_{i\in I}\mb B(i) = \bigcup_{i\in I}\mb C(i)$ and $\mb A(i)\Delta\mb B(i)\in \mathcal N$ for every $i\in I$.
	\end{lemma}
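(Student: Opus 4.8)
The plan is to take $\mb B$ to be the standard disjointification of $\mb C$ and then to check that the control hypothesis $\mb A(i)\Delta\mb C(i)\in\mathcal N$ survives this operation. Since $I$ is countable, I would fix an injection of $I$ into $\N$ and let $\prec$ be the induced strict total order, so that $\{j\in I: j\prec i\}$ is finite for every $i$, and set
\[
\mb B(i) = \mb C(i)\setminus\bigcup\nolimits_{j\prec i}\mb C(j),\qquad i\in I.
\]
Because $D_\mu$ is a ring and each union above is finite, every $\mb B(i)$ lies in $D_\mu$. The standard verifications then give that $\mb B$ is disjoint and that $\bigcup_{i\in I}\mb B(i)=\bigcup_{i\in I}\mb C(i)$ (for the nontrivial inclusion one sends each point of the right-hand side to the $\prec$-least index covering it); I regard these two set-theoretic identities as routine.

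The substance of the argument is the relation $\mb A(i)\Delta\mb B(i)\in\mathcal N$, and here I would argue inside $\nu=\mathfrak C_{\mathcal N}\mu$, whose domain $D_\nu=\Delta(D_\mu,\mathcal N)$ is a ring by Theorem~\ref{p_Ncompl}. First I would record that $\mb A(i)\in D_\nu$: by~(\ref{assoc}) we have $\mb A(i)=\mb C(i)\Delta(\mb A(i)\Delta\mb C(i))$ with $\mb C(i)\in D_\mu$ and $\mb A(i)\Delta\mb C(i)\in\mathcal N$, so $\mb A(i)\in\Delta(D_\mu,\mathcal N)$ by~(\ref{DeltaQN}); since also $\mb B(i)\in D_\mu$, the set $\mb A(i)\Delta\mb B(i)$ belongs to $D_\nu$. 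Next, by~(\ref{trans}),
\[
\mb A(i)\Delta\mb B(i)\subset(\mb A(i)\Delta\mb C(i))\cup(\mb C(i)\Delta\mb B(i)),
\]
and since $\mb B(i)\subset\mb C(i)$ one has $\mb C(i)\Delta\mb B(i)=\bigcup_{j\prec i}(\mb C(i)\cap\mb C(j))$. The disjointness of $\mb A$ is exactly what controls these terms: a point of $\mb C(i)\cap\mb C(j)$ lying outside both $\mb A(i)\Delta\mb C(i)$ and $\mb A(j)\Delta\mb C(j)$ would lie in $\mb A(i)\cap\mb A(j)=\varnothing$, so
\[
\mb C(i)\cap\mb C(j)\subset(\mb A(i)\Delta\mb C(i))\cup(\mb A(j)\Delta\mb C(j))\in\mathcal N.
\]
As $\mb C(i)\cap\mb C(j)\in D_\mu$, Lemma~\ref{l_compl}(i) yields $\mb C(i)\cap\mb C(j)\in\mathcal N_\mu\subset\mathcal N$, and taking the finite union over $j\prec i$ gives $\mb C(i)\Delta\mb B(i)\in\mathcal N$. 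Hence $\mb A(i)\Delta\mb B(i)$ is contained in a member $N$ of $\mathcal N$.

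The one subtlety I expect to be the crux is that $\mathcal N$ need not be hereditary, so membership in some $N\in\mathcal N$ does not by itself place $\mb A(i)\Delta\mb B(i)$ in $\mathcal N$. This is precisely where the membership $\mb A(i)\Delta\mb B(i)\in D_\nu$ established above does its work: I would apply the second assertion of Lemma~\ref{l_cring} to $A=\mb A(i)\Delta\mb B(i)$ to conclude $\mb A(i)\Delta\mb B(i)\in\mathcal N$, finishing the proof. Apart from this bookkeeping around non-hereditarity, everything reduces to finite computations in the rings $D_\mu$ and $D_\nu$, so I anticipate no further obstacles.
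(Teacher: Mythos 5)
Your proof is correct and takes essentially the same route as the paper's: the same finite-order disjointification $\mb B(i)=\mb C(i)\setminus\bigcup_{j\prec i}\mb C(j)$, the same use of the disjointness of $\mb A$ to bound $\mb C(i)\cap\mb C(j)$ by $(\mb A(i)\Delta\mb C(i))\cup(\mb A(j)\Delta\mb C(j))$, and the same resolution of the non-hereditarity of $\mathcal N$ by placing $\mb A(i)\Delta\mb B(i)$ in the ring $\Delta(D_\mu,\mathcal N)$ before applying the subset property (your appeal to Lemma~\ref{l_cring} is exactly the paper's appeal to Lemma~\ref{l_DeltaQN}). The only cosmetic difference is that you route each intersection $\mb C(i)\cap\mb C(j)$ through $\mathcal N_\mu$ via Lemma~\ref{l_compl}(i) and then take finite unions in $\mathcal N$, whereas the paper simply contains $\mb A(i)\Delta\mb B(i)$ in the single $\mathcal N$-set $\bigcup_{j\leq i}\mb A(j)\Delta\mb C(j)$ and concludes in one step.
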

	\begin{proof}
		We can assume that $I$ is linearly ordered in such a way that the set $\{j\in I: j\leq i\}$ is finite for every $i\in I$ (such an order exists because $I$ is countable). Let $\fm{\mb B}{I}$ be such that $\mb B(i) = \mb C(i)\setminus\bigcup_{j<i} \mb C(j)$ for every $i\in I$. Since $D_\mu$ is a ring, $\mb B(i)\in D_\mu$ for all $i\in I$ and we obviously have $\bigcup_{i\in I}\mb B(i) = \bigcup_{i\in I}\mb C(i)$. If $i\neq j$, then 
		\[
		\mb C(i)\cap \mb C(j)\subset (\mb A(i)\Delta \mb C(i))\cup (\mb A(j)\Delta \mb C(j))
		\]
		because $\mb A(i)\cap \mb A(j)=\varnothing$. In view of~(\ref{trans}), we obtain
		\begin{multline}\label{incl}
		\mb A(i)\Delta \mb B(i)\subset (\mb A(i)\Delta \mb C(i))\cup (\mb C(i)\Delta \mb B(i)) \\ = (\mb A(i)\Delta \mb C(i))\cup \bigcup_{j<i} (\mb C(i)\cap \mb C(j)) \subset \bigcup_{j\leq i} \mb A(j)\Delta \mb C(j),\quad i\in I.
		\end{multline}
		Lemma~\ref{l_DeltaQN} implies that the set $\mathcal K = \Delta(D_\mu,\mathcal N)$ is a ring. 
		Since $\mb A(i)\in\mathcal K$, it follows that $\mb A(i)\Delta \mb B(i)\in\mathcal K$ for every $i\in I$.
		As $\mathcal N$ is a ring, the set in the right-hand side of~(\ref{incl}) belongs to $\mathcal N$ and, hence, $\mb A(i)\Delta \mb B(i)\in\mathcal N$ for every $i\in I$ by Lemma~\ref{l_DeltaQN}.
	\end{proof}
	
	\begin{lemma}\label{l_part}
		Let $\mathcal N$ be a completion ring for $\mu$ and $\nu = \mathfrak C_{\mathcal N}\mu$. Let $\mb A\colon I\to D_\nu$ be a countable disjoint family. Then there is a disjoint family $\mb B\colon I\to D_\mu$ such that $\mb A(i)\Delta\mb B(i)\in \mathcal N$ for every $i\in I$.
	\end{lemma}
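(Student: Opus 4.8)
The plan is to reduce this statement directly to Lemma~\ref{l_partition}, which already does the essential work. The only gap between the two lemmas is that Lemma~\ref{l_partition} requires the auxiliary family $\mb C$ taking values in $D_\mu$, whereas here we are handed a family $\mb A$ taking values in the larger ring $D_\nu$. So the entire task is to manufacture a suitable approximating family $\mb C\colon I\to D_\mu$ and then quote the previous lemma verbatim.

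First I would recall that, by Definition~\ref{d_Ncompl} together with Lemma~\ref{l_compl}(ii), the domain of $\nu = \mathfrak C_{\mathcal N}\mu$ is exactly $D_\nu = \Delta(D_\mu,\mathcal N)$. By the characterization~(\ref{DeltaQN}) of $\Delta(D_\mu,\mathcal N)$, for each index $i\in I$ the membership $\mb A(i)\in D_\nu$ means precisely that there exists some $\mb C(i)\in D_\mu$ with $\mb A(i)\Delta\mb C(i)\in\mathcal N$. Choosing one such $\mb C(i)$ for every $i$ yields a family $\mb C\colon I\to D_\mu$ satisfying $\mb A(i)\Delta\mb C(i)\in\mathcal N$ for all $i\in I$.

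At this point the hypotheses of Lemma~\ref{l_partition} are met in full: $\mathcal N$ is a completion ring for $\mu$, the family $\fm{\mb A}{I}$ is countable and disjoint, and $\mb C\colon I\to D_\mu$ satisfies $\mb A(i)\Delta\mb C(i)\in\mathcal N$ for every $i$. Applying that lemma produces a disjoint family $\mb B\colon I\to D_\mu$ with $\mb A(i)\Delta\mb B(i)\in\mathcal N$ for all $i\in I$ (the extra conclusion $\bigcup_{i\in I}\mb B(i)=\bigcup_{i\in I}\mb C(i)$ supplied by Lemma~\ref{l_partition} is not needed here and may simply be discarded), which is exactly the asserted family.

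I do not expect any genuine obstacle, since the statement is essentially a restatement of Lemma~\ref{l_partition} with the approximating family existentially quantified rather than given. The only point requiring care is the opening move: one must correctly identify $D_\nu$ with $\Delta(D_\mu,\mathcal N)$ so that the defining property of $D_\nu$ furnishes the family $\mb C$. Everything else is an immediate citation.
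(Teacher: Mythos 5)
Your proposal is correct and matches the paper's proof, which simply states that the result follows immediately from Lemma~\ref{l_partition}; your reduction via $D_\nu = \Delta(D_\mu,\mathcal N)$ to choose the family $\mb C$ is exactly the implicit step being invoked there.
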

	\begin{proof}
		The statement follows immediately from Lemma~\ref{l_partition}.
	\end{proof}

	\begin{theorem}\label{ll10}
		Let $\mathfrak A$ be an Abelian group and $\mu$ be an $\mathfrak A$-valued content. Then $\mathcal N_\mu$ is a ring and $\mu$ admits completion. The map $\mu$ is complete if and only if $\mathcal N_\mu$ is a hereditary set. If $\mathcal N$ is a completion ring for $\mu$, then $\mathfrak{C}_{\mathcal N}\mu$ is an $\mathfrak A$-valued content. In particular, $\mathfrak C\mu$, $\mathfrak S\mu$, and $\overline{\mathfrak C}\mu$ are $\mathfrak A$-valued contents.
	\end{theorem}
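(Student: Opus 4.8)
The plan is to establish the four assertions in turn, with the bulk of the work falling on the additivity of $\mathfrak C_{\mathcal N}\mu$.

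First I would verify directly that $\mu$ admits completion. The domain $D_\mu$ is a ring by hypothesis, so by Definition~\ref{dd7} it remains to check that $\mu(B_1)=\mu(B_2)$ whenever $B_1,B_2\in D_\mu$ and $B_1\Delta B_2\in\mathcal N_\mu$. Here the sets $B_1\setminus B_2$ and $B_2\setminus B_1$ lie in $D_\mu$ (a ring) and are contained in $B_1\Delta B_2\in\mathcal N_\mu$, so the definition of a $\mu$-null set forces $\mu(B_1\setminus B_2)=\mu(B_2\setminus B_1)=\mu(\varnothing)=0$. Applying additivity to the finite partitions $\{B_1\cap B_2,B_1\setminus B_2\}$ of $B_1$ and $\{B_1\cap B_2,B_2\setminus B_1\}$ of $B_2$ then gives $\mu(B_1)=\mu(B_1\cap B_2)=\mu(B_2)$. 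Once $\mu$ is known to admit completion, Proposition~\ref{p_Nring} yields that $\mathcal N_\mu$ is a ring, and the equivalence ``$\mu$ complete $\iff$ $\mathcal N_\mu$ hereditary'' is then immediate from Definition~\ref{dd7}, since completeness is by definition the conjunction of admitting completion and the heredity of $\mathcal N_\mu$.

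The main step is to show that $\nu=\mathfrak C_{\mathcal N}\mu$ is an $\mathfrak A$-valued content for any completion ring $\mathcal N$. By Theorem~\ref{p_Ncompl}, $D_\nu=\kappa(D_\mu\cup\mathcal N)$ is a ring and $\nu$ extends $\mu$, while $R_\nu\subset\mathfrak A$ because every value of $\nu$ equals a value of $\mu$; thus only finite additivity remains. Given $A\in D_\nu$ and a finite partition $\mb A\colon I\to D_\nu$ of $A$, I would first invoke Lemma~\ref{l_part} to obtain a disjoint family $\mb B\colon I\to D_\mu$ with $\mb A(i)\Delta\mb B(i)\in\mathcal N$ for all $i$. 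Setting $B=\bigcup_{i\in I}\mb B(i)$, finiteness of $I$ gives $B\in D_\mu$, and $\mb B$ is then a finite partition of $B$, so additivity of $\mu$ yields $\mu(B)=\sum_{i\in I}\mu(\mb B(i))$. The crux is to verify $A\Delta B\in\mathcal N$: by~(\ref{0}) we have $A\Delta B\subset\bigcup_{i\in I}\mb A(i)\Delta\mb B(i)$, whose right-hand side lies in the ring $\mathcal N$ as $I$ is finite; since $A\Delta B\in D_\nu$, Lemma~\ref{l_cring} upgrades this inclusion to $A\Delta B\in\mathcal N$. The defining property of $\mathfrak C_{\mathcal N}\mu$ then gives $\nu(A)=\mu(B)$ and, applied to each index separately, $\nu(\mb A(i))=\mu(\mb B(i))$, whence $\nu(A)=\mu(B)=\sum_{i\in I}\mu(\mb B(i))=\sum_{i\in I}\nu(\mb A(i))$.

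Finally, the last assertion follows at once by applying the preceding paragraph to the three completion rings $\hat{\mathcal N}_\mu$, $\mathcal N_0[\mu]$, and $\mathcal N[\mu]$, which were shown above to be completion rings for $\mu$. I expect the only genuine difficulty to be the additivity argument of the third paragraph, and within it the passage from the partition $\mb A$ in $D_\nu$ to an honest partition in $D_\mu$ whose union is $\mathcal N$-close to $A$; once Lemmas~\ref{l_part} and~\ref{l_cring} are in hand this reduces to bookkeeping with symmetric differences.
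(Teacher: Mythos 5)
Your proposal is correct and follows essentially the same route as the paper's proof: the identical partition argument via $B_1\setminus B_2$, $B_1\cap B_2$, $B_2\setminus B_1$ to show $\mu$ admits completion, then Proposition~\ref{p_Nring} and Definition~\ref{dd7} for the first two assertions, and for additivity of $\mathfrak C_{\mathcal N}\mu$ the same reduction via Lemma~\ref{l_part} to a disjoint family in $D_\mu$, with inclusion~(\ref{0}) and Lemma~\ref{l_cring} yielding $A\Delta B\in\mathcal N$. Your slightly more explicit justifications (spelling out the two finite partitions of $B_1$ and $B_2$, and checking $R_\nu\subset\mathfrak A$) are fine details the paper leaves implicit; no gaps.
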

	\begin{proof}
		Let $B_1, B_2\in D_\mu$ be such that $B_1\Delta B_2\in\mathcal N_\mu$. Then $B_1\setminus B_2$, $B_1\cap B_2$ and $B_2\setminus B_1$ are pairwise disjoint elements of $D_\mu$. Since $B_1\setminus B_2$ and $B_2\setminus B_1$ are both subsets of $B_1\Delta B_2$, we have $\mu(B_1\setminus B_2) = \mu(B_2\setminus B_1) = 0$ and it follows from the additivity of $\mu$ that $\mu(B_1) = \mu(B_1\cap B_2) = \mu(B_2)$. Hence, $\mu$ admits completion. 
		By Proposition~\ref{p_Nring}, it follows that $\mathcal N_\mu$ is a ring, and Definition~\ref{dd7} implies that $\mu$ is complete if and only if $\mathcal N_\mu$ is hereditary. 
		Let $\mathcal N$ be a completion ring for $\mu$ and $\nu = \mathfrak{C}_{\mathcal N}\mu$. Let $A\in D_\nu$ and $\mb A\colon I\to D_\nu$ be a finite partition of $A$. By Lemma~\ref{l_part}, there is a disjoint family $\mb B\colon I\to D_\mu$ such that $\mb A(i)\Delta\mb B(i)\in\mathcal N$ for every $i\in I$. As $D_\mu$ is a ring, the set $B = \bigcup_{i\in I}\mb B(i)$ belongs to $D_\mu$. Since $A\Delta B\in D_\nu$, it follows from~(\ref{0}) and Lemma~\ref{l_cring} that $A\Delta B\in\mathcal N$. Thus, $\nu(A) = \mu(B)$ and $\nu(\mb A(i)) = \mu(\mb B(i))$ for every $i\in I$. Since $\mu(B) = \sum_{i\in I}\mu(\mb B(i))$ by the additivity of $\mu$, we conclude that $\nu(A) = \sum_{i\in I}\nu(\mb A(i))$, i.e., $\nu$ is an $\mathfrak A$-valued additive function. Since $D_\nu$ is a ring by Theorem~\ref{p_Ncompl}, we conclude that $\nu$ is an $\mathfrak A$-valued content.    
	\end{proof}
	
	In general, $\mathcal N$-completions of $\sigma$-contents may be not $\sigma$-additive (see Examples~\ref{e4} and~\ref{e5}). The next theorem shows, however, that $\sigma$-additivity is always preserved if $\mathcal N$ is regular in the sense of Definition~\ref{d_reg}.  	
	
	\begin{theorem}\label{p_scontent}
		Let $\mathfrak A$ be a HTAG, $\mu$ be an $\mathfrak A$-valued $\sigma$-content, and $\mathcal N$ be a completion ring for $\mu$. If $\mathcal N$ is $\mu$-regular, then $\mathfrak{C}_{\mathcal N}\mu$ is an $\mathfrak A$-valued $\sigma$-content.
	\end{theorem}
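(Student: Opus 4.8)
The plan is to verify condition~$(\sigma)$ of Definition~\ref{dd5} for $\nu=\mathfrak C_{\mathcal N}\mu$. By Theorem~\ref{ll10} the map $\nu$ is already an $\mathfrak A$-valued content, so $D_\nu$ is a ring and $\nu$ is finitely additive, and by Theorem~\ref{p_Ncompl} we have $\mathcal N_\nu=\mathcal N$; once $(\sigma)$ is established, $\nu$ will be an $\mathfrak A$-valued $\sigma$-content. So let $A\in D_\nu$ and let $\mb A\colon I\to D_\nu$ be a countable partition of $A$. Finite $I$ is covered by finite additivity, so I assume $I$ infinite and fix an ordering of $I$ in which each initial segment $\{j\in I:j<i\}$ is finite (possible since $I$ is countable, exactly as in Lemma~\ref{l_partition}). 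The difficulty, as opposed to the finitely additive argument in Theorem~\ref{ll10}, is that $D_\mu$ is only a ring: a countable disjoint family in $D_\mu$ approximating the $\mb A(i)$ will in general have a union outside $D_\mu$, so the $\sigma$-additivity of $\mu$ cannot be applied to it. The whole point will be to manufacture instead a genuine countable $D_\mu$-partition of a single set of $D_\mu$ whose pieces reproduce the values $\nu(\mb A(i))$.

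The construction rests on $\mu$-regularity via Proposition~\ref{p_reg}(i). Applied to $A\in\Delta(D_\mu,\mathcal N)=D_\nu$ it yields $B\in D_\mu$ with $B\subset A$ and $A\setminus B\in\mathcal N$, so $\nu(A)=\mu(B)$. For each $i$ the set $D_i=B\cap\mb A(i)$ lies in $D_\nu$, and $\{D_i\}_{i\in I}$ partitions $B$; since $\mb A(i)\setminus B\subset A\setminus B\in\mathcal N$, Lemma~\ref{l_cring} gives $\mb A(i)\setminus B\in\mathcal N_\nu$, so finite additivity of $\nu$ yields $\nu(\mb A(i))=\nu(D_i)$. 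Applying Proposition~\ref{p_reg}(i) once more to each $D_i$ produces a disjoint family $\mb C\colon I\to D_\mu$ with $\mb C(i)\subset D_i$, $N_i:=D_i\setminus\mb C(i)\in\mathcal N$, and $\nu(\mb A(i))=\nu(D_i)=\mu(\mb C(i))$. Finally, since $N_i\subset B\in D_\mu$ we have $N_i\in\mathcal N\cap\hat D_\mu\subset\hat{\mathcal N}_\mu$ by $\mu$-regularity (Definition~\ref{d_reg}); choosing $M_i\in\mathcal N_\mu$ with $N_i\subset M_i$ and replacing $M_i$ by $M_i\cap B$, I may assume $M_i\in\mathcal N_\mu$, $M_i\subset B$, and $N_i\subset M_i$.

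The key step is to set $G_i=\mb C(i)\cup M_i\in D_\mu$ and disjointify: $G_i'=G_i\setminus\bigcup_{j<i}G_j\in D_\mu$. Because $D_i\subset G_i\subset B$ and $\{D_i\}$ partitions $B$, one gets $\bigcup_{i\in I}G_i=B$, so $\{G_i'\}_{i\in I}$ is a countable partition of $B$ by elements of $D_\mu$. I would then check $\mu(G_i')=\mu(\mb C(i))$ for every $i$: the sets $G_i'\setminus\mb C(i)\subset M_i$ and $\mb C(i)\cap\bigcup_{j<i}G_j\subset\bigcup_{j<i}M_j$ are $D_\mu$-subsets of finite unions of $\mathcal N_\mu$-sets, hence null (recall $\mathcal N_\mu$ is a ring by Proposition~\ref{p_Nring}), so finite additivity of $\mu$ gives $\mu(G_i')=\mu\!\left(\mb C(i)\setminus\bigcup_{j<i}M_j\right)=\mu(\mb C(i))$. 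Now $\sigma$-additivity of $\mu$, applied to the $D_\mu$-partition $\{G_i'\}$ of $B\in D_\mu$, shows that $\mu\circ G'$ is summable with $\sum_{i\in I}\mu(G_i')=\mu(B)$. Since $\mu(G_i')=\mu(\mb C(i))=\nu(\mb A(i))$ for every $i$, the families $\mu\circ G'$ and $\nu\circ\mb A$ are identical, so $\nu\circ\mb A$ is summable with $\sum_{i\in I}\nu(\mb A(i))=\mu(B)=\nu(A)$, which is precisely~$(\sigma)$.

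The main obstacle is exactly the passage from the disjoint approximants $\mb C(i)$, whose union may leave $D_\mu$, to the honest $D_\mu$-partition $\{G_i'\}$ of $B$ carrying the same $\mu$-values; this is where $\mu$-regularity is indispensable, as it allows me to absorb each remainder $N_i$ into a $\mu$-null set $M_i\in D_\mu$. Once that is done, the elementary disjointification together with the identity $\mu(G_i')=\mu(\mb C(i))$ makes $\mu\circ G'$ and $\nu\circ\mb A$ literally the same family, which also neatly avoids any delicate question about summability of subfamilies in the possibly incomplete group $\mathfrak A$.
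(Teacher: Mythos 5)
Your proof is correct, and it reaches the conclusion by a somewhat different route through the middle of the argument than the paper does. Both proofs share the same skeleton: apply Proposition~\ref{p_reg}(i) to get an inner approximant $B\in D_\mu$ of $A$ with $A\setminus B\in\mathcal N$, transfer the partition to $B$ by intersecting, manufacture a countable $D_\mu$-partition of $B$ whose pieces carry the values $\nu(\mb A(i))$, and finish with the $\sigma$-additivity of $\mu$ on $B$. The difference is in how that partition is built. The paper approximates each piece $\mb A(i)\cap B$ \emph{from outside} via Proposition~\ref{p_reg}(ii), obtaining a (non-disjoint) cover $\fm{\mb C}{I}$ of $B$ with $\mb A(i)\Delta\mb C(i)\in\mathcal N$, and then invokes the separately proved disjointification Lemma~\ref{l_partition}, which preserves $\mathcal N$-closeness; the value-matching $\nu(\mb A(i))=\mu(\mb B(i))$ then comes straight from the defining property of $\mathfrak C_{\mathcal N}\mu$. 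You instead approximate each piece \emph{from inside} via a second use of Proposition~\ref{p_reg}(i), which gives disjointness of the $\mb C(i)$ for free but leaves the union short of $B$ by the remainders $N_i$; you then use $\mu$-regularity a second time, directly at the level of $\hat{\mathcal N}_\mu$, to absorb each $N_i$ into an honest null set $M_i\in\mathcal N_\mu$ contained in $B$, and you disjointify the sets $G_i=\mb C(i)\cup M_i$ inline with the same finite-initial-segment ordering device that underlies Lemma~\ref{l_partition}. Your route buys two things: all error sets along the way ($G_i'\setminus\mb C(i)\subset M_i$ and $\mb C(i)\cap\bigcup_{j<i}G_j\subset\bigcup_{j<i}M_j$) lie in $\mathcal N_\mu$ itself, so the identity $\mu(G_i')=\mu(\mb C(i))$ is checked by plain finite additivity of $\mu$, and the families $\mu\circ G'$ and $\nu\circ\mb A$ end up literally identical, so summability in the HTAG transfers with no further argument. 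The paper's route buys modularity instead: Lemma~\ref{l_partition} and Proposition~\ref{p_reg}(ii) are reusable components (Lemma~\ref{l_partition} reappears in the proofs of Theorems~\ref{ll10} and~\ref{t_meas}), at the cost of your disjointification and null-absorption being folded into prefabricated lemmas rather than done by hand. All steps in your version check out, including the two small points worth verifying: $M_i\cap B\in\mathcal N_\mu$ (a $D_\mu$-subset of a null set is null), and $\mathcal N_\mu$ being a ring (Proposition~\ref{p_Nring}) so that $\bigcup_{j<i}M_j\in\mathcal N_\mu$ for the finite initial segments.
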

	\begin{proof}
		Let $\nu = \mathfrak{C}_{\mathcal N}\mu$, $A\in D_\nu$, and $\mb A\colon I\to D_\nu$ be a countable partition of $A$. By Proposition~\ref{p_reg}(i), there is $B\in D_\mu$ such that $B\subset A$ and $A\setminus B\in\mathcal N$. Let $\fm{\mb A'}{I}$ be such that $\mb A'(i) = \mb A(i)\cap B$ for every $i\in I$. Then $\mb A'(i)\in D_\nu\cap \hat D_\mu$ for all $i\in I$. By Proposition~\ref{p_reg}(ii), there is $\mb C'\colon I\to D_\mu$ such that $\mb A'(i)\subset \mb C'(i)$ and $\mb C'(i)\setminus \mb A'(i)\in\mathcal N$ for every $i\in I$. Let $\fm{\mb C}{I}$ be such that $\mb C(i) = \mb C'(i)\cap B$ for every $i\in I$. As $\mb A'(i)\subset \mb C(i)\subset B$ for all $i\in I$ and $B = \bigcup_{i\in I}\mb A'(i)$, we have $B = \bigcup_{i\in I}\mb C(i)$. Since $\mb A(i)\setminus \mb A'(i) = \mb A(i)\setminus B\subset A\setminus B$, it follows from~(\ref{trans}) that
		\[
		\mb A(i)\Delta \mb C(i) \subset (\mb A(i)\setminus \mb A'(i))\cup (\mb C(i)\setminus \mb A'(i))\subset (A\setminus B)\cup (\mb C'(i)\setminus \mb A'(i))
		\] 
		for every $i\in I$. As $\mb A(i)\Delta \mb C(i)\in D_\nu$, Lemma~\ref{l_cring} implies that $\mb A(i)\Delta \mb C(i)\in \mathcal N$ for every $i\in I$. By Lemma~\ref{l_partition}, there is a disjoint partition $\mb B\colon I\to D_\mu$ of $B$ such that $\mb A(i)\Delta\mb B(i)\in\mathcal N$ for every $i\in I$. By the $\sigma$-additivity of $\mu$, we have $\mu(B) = \sum_{i\in I}\mu(\mb B(i))$. Since $\nu(A) = \mu(B)$ and $\nu(\mb A(i)) = \mu(\mb B(i))$ for every $i\in I$, we conclude that $\nu(A) = \sum_{i\in I}\nu(\mb A(i))$. Thus, $\nu$ is an $\mathfrak A$-valued $\sigma$-additive function. Since $D_\nu$ is a ring by Theorem~\ref{p_Ncompl}, we conclude that $\nu$ is an $\mathfrak A$-valued $\sigma$-content.
	\end{proof}

	\begin{proposition}\label{p_reg1}
		Let $\mu$ be a map, $\varnothing\in D_\mu$, and $\mathcal N\subset \mathcal N[\mu]$. Then $\mathcal N$ is $\mu$-regular.
	\end{proposition}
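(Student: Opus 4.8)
The plan is to simply unwind the definitions involved. By Definition~\ref{d_reg}, proving that $\mathcal N$ is $\mu$-regular amounts to establishing the inclusion $\mathcal N\cap\hat D_\mu\subset\hat{\mathcal N}_\mu$. So I would fix an arbitrary $N\in\mathcal N\cap\hat D_\mu$ and aim to show that $N\in\hat{\mathcal N}_\mu$, i.e., that $N$ is a subset of some element of $\mathcal N_\mu$.

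The argument will combine two facts about such an $N$. First, since $N\in\mathcal N\subset\mathcal N[\mu]$, the defining property~(\ref{Nmu}) of $\mathcal N[\mu]$ supplies $N\cap B\in\hat{\mathcal N}_\mu$ for \emph{every} $B\in D_\mu$. Second, the hypothesis $N\in\hat D_\mu$ means precisely that there exists some $B\in D_\mu$ with $N\subset B$, and for this particular $B$ we have $N=N\cap B$. Substituting this identity into the first fact yields $N=N\cap B\in\hat{\mathcal N}_\mu$, which is exactly what is required; letting $N$ range over $\mathcal N\cap\hat D_\mu$ then gives the desired inclusion.

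I do not anticipate any genuine obstacle here: the statement is a direct consequence of chaining together the definitions of $\hat D_\mu$, $\mathcal N[\mu]$, and $\mu$-regularity. The only substantive move is the trivial self-intersection $N=N\cap B$, which is what lets one pass from the ``localized'' condition $N\cap B\in\hat{\mathcal N}_\mu$ defining $\mathcal N[\mu]$ to the global conclusion $N\in\hat{\mathcal N}_\mu$ once $N$ is known to sit inside a single member of $D_\mu$. No appeal to the completion-ring machinery, to $\sigma$-additivity, or to the structure of $\mathfrak A$ is needed.
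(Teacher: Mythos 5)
Your proof is correct and coincides with the paper's own argument: both pick $B\in D_\mu$ with $N\subset B$ and apply the defining property of $\mathcal N[\mu]$ to the self-intersection $N=N\cap B$ to conclude $N\in\hat{\mathcal N}_\mu$. Nothing further is needed.
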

	\begin{proof}
		Let $N\in \mathcal N\cap\hat D_\mu$ and $B\in D_\mu$ be such that $N\subset B$. Since $N\in\mathcal N[\mu]$ and $N = N\cap B$, we have $N\in\hat{\mathcal N}_\mu$. Thus, $\mathcal N\cap\hat D_\mu\subset \hat{\mathcal N}_\mu$, i.e., $\mathcal N$ is $\mu$-regular.
	\end{proof}

	\begin{proposition}
		Let $\mathfrak A$ be a HTAG and $\mu$ be an $\mathfrak A$-valued $\sigma$-content. Then $\mathfrak C\mu$, $\mathfrak S\mu$, and $\overline{\mathfrak C}\mu$ are $\mathfrak A$-valued $\sigma$-contents. 
	\end{proposition}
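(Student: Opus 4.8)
The plan is to obtain the statement as an immediate corollary of Theorem~\ref{p_scontent}, using Proposition~\ref{p_reg1} to supply the regularity hypothesis. First I would record that the three natural completion procedures are $\mathcal N$-completions: by definition $\mathfrak C\mu = \mathfrak C_{\hat{\mathcal N}_\mu}\mu$, $\mathfrak S\mu = \mathfrak C_{\mathcal N_0[\mu]}\mu$, and $\overline{\mathfrak C}\mu = \mathfrak C_{\mathcal N[\mu]}\mu$. Since an $\mathfrak A$-valued $\sigma$-content is in particular an $\mathfrak A$-valued content, Theorem~\ref{ll10} guarantees that $\mu$ admits completion, so that the rings $\hat{\mathcal N}_\mu$, $\mathcal N_0[\mu]$, and $\mathcal N[\mu]$ are defined, and the discussion opening Section~\ref{s4} shows that each of them is a completion ring for $\mu$.

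Next I would check the $\mu$-regularity required by Theorem~\ref{p_scontent}. Here the only thing to verify is the containment of each ring in $\mathcal N[\mu]$: the inclusions $\hat{\mathcal N}_\mu\subset \mathcal N[\mu]$ and $\mathcal N_0[\mu]\subset \mathcal N[\mu]$ are recorded in Section~\ref{s4}, while $\mathcal N[\mu]\subset \mathcal N[\mu]$ is trivial. Consequently Proposition~\ref{p_reg1} applies to each of the three rings and shows that $\hat{\mathcal N}_\mu$, $\mathcal N_0[\mu]$, and $\mathcal N[\mu]$ are all $\mu$-regular.

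Finally I would invoke Theorem~\ref{p_scontent}: as $\mu$ is an $\mathfrak A$-valued $\sigma$-content and each natural completion ring is a $\mu$-regular completion ring for $\mu$, the corresponding $\mathcal N$-completions $\mathfrak C\mu$, $\mathfrak S\mu$, and $\overline{\mathfrak C}\mu$ are $\mathfrak A$-valued $\sigma$-contents, which is the assertion.

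I expect no genuine obstacle: the proposition is a direct consequence of Theorem~\ref{p_scontent} and Proposition~\ref{p_reg1}, and the substantive work—exploiting $\mu$-regularity via the approximation in Proposition~\ref{p_reg} to carry out the partition argument—has already been discharged in the proof of Theorem~\ref{p_scontent}. The single point demanding care is to cite the correct inclusions witnessing that all three natural completion rings lie inside $\mathcal N[\mu]$, since this is precisely what makes them $\mu$-regular.
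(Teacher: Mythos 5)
Your proposal is correct and coincides with the paper's own proof, which derives the proposition in one line from Theorem~\ref{p_scontent} and Proposition~\ref{p_reg1}; the inclusions $\hat{\mathcal N}_\mu\subset\mathcal N[\mu]$ and $\mathcal N_0[\mu]\subset\mathcal N[\mu]$ that you cite from Section~\ref{s4} are exactly what makes Proposition~\ref{p_reg1} applicable. Your write-up merely spells out the details the paper leaves implicit; there is no substantive difference.
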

	\begin{proof}
		The statement follows at once from Theorem~\ref{p_scontent} and Proposition~\ref{p_reg1}.  
	\end{proof}
	
	We conclude this section by discussing how the procedure of $\mathcal N$-completion is related to the $\mu$-topology induced by a content $\mu$ on its domain. We refer the reader to~\cite{Weber} for a detailed treatment of $\mu$-topologies and, more generally, Fr\'echet--Nikodym topologies on Boolean rings. Such topologies turn out to be a useful tool for proving extension and decomposition results for group-valued contents~\cite{Weber} and establishing the compactness properties of the range of a content in a locally convex space~\cite{UrbinatiWeber}. After giving basic definitions, we shall show that a content taking values in a HTAG and all its $\mathcal N$-completions give rise to essentially the same topological Boolean rings (see Proposition~\ref{p_assH} below).
	
	Recall that $R$ is called a Boolean ring if it is an (algebraic) ring such that $x^2 = x$ for every $x\in R$. It is assumed that $R$ is endowed with the order such that, for every $x,y\in R$, we have $x\leq y$ if and only if $xy = x$. Let $\mathfrak A$ be an Abelian group. A map $\mu\colon R\to\mathfrak A$ is called an $\mathfrak A$-valued $R$-content if $\mu(x+y) = \mu(x)+\mu(y)$ for every $x,y\in R$ such that $xy=0$.
	
	Let $R$ be a Boolean ring, $\mathfrak A$ be a topological Abelian group, and $\mu$ be an $\mathfrak A$-valued $R$-content. For every set $U$, we define
	\begin{equation}\label{VRmuU}
	V_{R,\mu}(U) = \{x\in R: \mu(y)\in U \mbox{ for every }y\in R\mbox{ such that }y\leq x\}.
	\end{equation}
	Let $\mathcal V$ be the set of all sets $V_{R,\mu}(U)$, where $U$ is a neighbourhood of the origin in $\mathfrak A$. It is easy to see that $\mathcal V$ is a basis of neighbourhoods of the origin for a uniquely determined group topology on $R$. The latter is called the $\mu$-topology for $(R,\mathfrak A)$ and actually makes $R$ into a topological ring, which is denoted by $R\langle\mu,\mathfrak A\rangle$. It follows immediately from the definitions that
	\begin{enumerate}
		\item[(a)] if $\mathfrak A$ is Hausdorff, then the closure of $\{0\}$ in $R\langle\mu,\mathfrak A\rangle$ is equal to $V_{R,\mu}(\{0_{\mathfrak A}\})$,
		\item[(b)] if $R'$ is a subring of $R$, then $V_{R,\mu}(U)\cap R'\subset V_{R',\mu|_{R'}}(U)$ for every set $U$.
	\end{enumerate}
	
	Let $\mathcal Q$ be a ring of sets. In view of~(\ref{assoc}) and~(\ref{distr}), $\mathcal Q$ becomes a Boolean ring if we define $A+B = A\Delta B$ and $AB = A\cap B$ for every $A,B\in\mathcal Q$. This Boolean ring will be denoted by $\check{\mathcal Q}$. Let $\mathfrak A$ be an Abelian group. A map $\mu$ is an $\mathfrak A$-valued content if and only if $D_\mu$ is a ring of sets and $\mu$ is an $\mathfrak A$-valued $\check D_\mu$-content.
	
	If $R$ is a topological ring, then the closure $\overline{\{0\}}$ of $\{0\}$ in $R$ is a closed ideal of $R$. The quotient ring $R/\overline{\{0\}}$ is Hausdorff and is called the Hausdorff ring associated with $R$.
	
	The next proposition shows that the topological Boolean rings determined by a content taking values in a HTAG and by its $\mathcal N$-completions have the same associated Hausdorff rings.
	
	\begin{proposition}\label{p_assH}
		Let $\mathfrak A$ be a HTAG, $\mu$ be an $\mathfrak A$-valued content, $\mathcal N$ be a completion ring for $\mu$, and $\nu = \mathfrak C_{\mathcal N}\mu$. Let $R = \check D_\nu\langle\nu,\mathfrak A\rangle$ and $R' = \check D_\mu\langle\mu,\mathfrak A\rangle$. Then $R'$ is a dense subring of $R$, $\overline{\{0\}}{}^R = \mathcal N$, $\overline{\{0\}}{}^{R'} = \mathcal N_\mu$, and there is a unique topological ring isomorphism $l\colon R'/\mathcal N_\mu\to R/\mathcal N$ such that $l\circ j' =j|_{R'}$, where $j\colon R\to R/\mathcal N$ and $j'\colon R'\to R'/\mathcal N_\mu$ are canonical maps. 
	\end{proposition}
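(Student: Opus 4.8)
The plan is to work entirely with the neighbourhood filters~(\ref{VRmuU}) and to lean on the elementary facts~(a), (b) together with Theorem~\ref{p_Ncompl}. Observe first that in any $\check{\mathcal Q}$ the Boolean order $y\leq x$ means exactly $y\subset x$, so $V_{\check D_\mu,\mu}(U)$ (resp.\ $V_{\check D_\nu,\nu}(U)$) consists of those $A$ all of whose $D_\mu$-subsets (resp.\ $D_\nu$-subsets) are sent into $U$. The two closure assertions then fall out immediately: by fact~(a), $\overline{\{0\}}{}^{R'}=V_{\check D_\mu,\mu}(\{0_{\mathfrak A}\})=\{A\in D_\mu:\mu(B)=0\text{ for all }B\in D_\mu,\ B\subset A\}=\mathcal N_\mu$, and likewise $\overline{\{0\}}{}^R=V_{\check D_\nu,\nu}(\{0_{\mathfrak A}\})=\mathcal N_\nu=\mathcal N$, the last equality by Theorem~\ref{p_Ncompl}. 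This also identifies $\ker j=\mathcal N$ and $\ker j'=\mathcal N_\mu$.

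Next I would show that $R'$ is a \emph{topological} subring of $R$. Since $\nu$ extends $\mu$ and $D_\mu\subset D_\nu$, the ring $\check D_\mu$ is an algebraic subring of $\check D_\nu$, so by fact~(b) it remains only to prove that the $\mu$-topology agrees with the subspace topology, i.e.\ the identity $V_{\check D_\nu,\nu}(U)\cap D_\mu=V_{\check D_\mu,\mu}(U)$. The inclusion $\subset$ is fact~(b). For $\supset$, take $A\in D_\mu$ satisfying the $\mu$-condition and any $C\in D_\nu$ with $C\subset A$; choose $B'\in D_\mu$ with $C\Delta B'\in\mathcal N$ (possible as $D_\nu=\Delta(D_\mu,\mathcal N)$) and put $B=B'\cap A\in D_\mu$. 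Then $B\subset A$ and, by~(\ref{distr}), $C\Delta B=(C\Delta B')\cap A\in\mathcal N$ thanks to Lemma~\ref{l_cring}, so $\nu(C)=\mu(B)$ by the defining property of $\mathfrak C_{\mathcal N}\mu$; since $B\subset A$ gives $\mu(B)\in U$, we get $\nu(C)\in U$ and hence $A\in V_{\check D_\nu,\nu}(U)$.

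For density, each $A\in D_\nu$ differs from some $B\in D_\mu$ by $A\Delta B\in\mathcal N=\overline{\{0\}}{}^R$, which lies in every neighbourhood of $0$; hence $B\in A+V$ for every neighbourhood $V$ of $0$, so $D_\mu$ meets every neighbourhood of $A$ and is dense in $R$. For the isomorphism I would study $\phi=j|_{R'}=j\circ\iota$, where $\iota\colon R'\hookrightarrow R$ is the (continuous) inclusion. It is a continuous ring homomorphism with $\ker\phi=D_\mu\cap\mathcal N=\mathcal N_\mu$, and it is surjective because $j(A)=j(B)$ whenever $A\Delta B\in\mathcal N$, so $j(D_\mu)=j(D_\nu)=R/\mathcal N$. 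The key point is that $\phi$ is \emph{open}: I would first establish the saturation identity $V_{\check D_\nu,\nu}(U)+\mathcal N=V_{\check D_\nu,\nu}(U)$ directly, noting that for $N\in\mathcal N$ and $C\subset A\Delta N$ the set $C\Delta(C\cap A)=C\setminus A\subset N$ lies in $\mathcal N$ by Lemma~\ref{l_cring}, whence $\nu(C)=\nu(C\cap A)\in U$ since $\nu$ admits completion; combining this with the identity of the previous paragraph yields $j(V_{\check D_\mu,\mu}(U))=j(V_{\check D_\nu,\nu}(U))$, which is a neighbourhood of $0$ because the canonical map $j$ is open. Thus $\phi$ sends the basic neighbourhoods of $0$ to neighbourhoods of $0$ and is therefore open, so a continuous open surjective homomorphism with kernel $\mathcal N_\mu$ induces a topological ring isomorphism $l\colon R'/\mathcal N_\mu\to R/\mathcal N$ with $l\circ j'=\phi=j|_{R'}$, unique since $j'$ is surjective.

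The main obstacle is this last step, and within it the openness of $\phi$. All the genuinely measure-theoretic content is concentrated in the two computational identities $V_{\check D_\nu,\nu}(U)\cap D_\mu=V_{\check D_\mu,\mu}(U)$ and $V_{\check D_\nu,\nu}(U)+\mathcal N=V_{\check D_\nu,\nu}(U)$, both of which rely on Lemma~\ref{l_cring} and on $\nu$ admitting completion; once these are in hand, the remaining topological-group bookkeeping (factoring $\phi$ through $j'$ and recognising the induced map as a homeomorphism) is routine.
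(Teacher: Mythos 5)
Your proposal is correct, and its first half coincides with the paper's own argument: the identification $\overline{\{0\}}{}^{R}=\mathcal N_\nu=\mathcal N$ and $\overline{\{0\}}{}^{R'}=\mathcal N_\mu$ via fact~(a) and Theorem~\ref{p_Ncompl}, and the identity $V_{\check D_\mu,\mu}(U)=V_{\check D_\nu,\nu}(U)\cap D_\mu$ proved by replacing an approximating $B'\in D_\mu$ with $B=B'\cap A$ and invoking Lemma~\ref{l_cring}, is precisely the paper's equality~(\ref{V=V}), established by the same intersection trick. Where you genuinely diverge is the endgame. The paper factors it through two purely topological statements (Lemmas~\ref{l_assH} and~\ref{l_assH1}): in any topological ring with $R=R'+\overline{\{0\}}{}^{R}$ and $\overline{\{0\}}{}^{R'}=\overline{\{0\}}{}^{R}\cap R'$, the subring $R'$ is automatically dense and the associated Hausdorff rings are canonically and topologically isomorphic, openness being extracted from the general fact that every open set in a topological group absorbs translation by $\overline{\{0\}}$. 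You instead prove openness of $\phi=j|_{R'}$ by hand, through the saturation identity $V_{\check D_\nu,\nu}(U)+\mathcal N=V_{\check D_\nu,\nu}(U)$, which you verify measure-theoretically from Lemma~\ref{l_cring} and the fact that $\nu$ admits completion with $\mathcal N_\nu=\mathcal N$; this correctly yields $j\bigl(V_{\check D_\mu,\mu}(U)\bigr)=j\bigl(V_{\check D_\nu,\nu}(U)\bigr)$ and hence that $\phi$ is open, and your density argument ($\mathcal N=\overline{\{0\}}{}^{R}$ lies in every neighbourhood of $0$, so $B\in A+V$ whenever $A\Delta B\in\mathcal N$) is an unwound version of the paper's observation that $\overline{\{x\}}{}^{R}=x+\mathcal N$. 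Both routes are sound. Yours is self-contained and makes visible exactly where the measure theory enters; note, however, that your saturation computation can be bypassed: since every \emph{open} set absorbs $\mathcal N$ (the paper's Lemma~\ref{l_assH}), it suffices to saturate the open core of the basic neighbourhood, so the measure-theoretic verification, while correct, is strictly speaking redundant. Conversely, the paper's factorization buys a reusable lemma about topological rings that never mentions contents, at the cost of slightly more abstract bookkeeping.
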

	
	To prove Proposition~\ref{p_assH}, we need two auxiliary statements.
	
	\begin{lemma}\label{l_assH}
		Let $R$ be a topological Abelian group, $\mathcal N = \overline{\{0\}}$, and $U\subset R$ be an open set. If $x\in U$, then $x+\mathcal N\subset U$.
	\end{lemma}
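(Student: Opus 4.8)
The plan is to deduce the lemma from the single elementary fact that, in any topological Abelian group, the closure $\mathcal N = \overline{\{0\}}$ of the origin is contained in \emph{every} neighbourhood of $0$. Granting this, the lemma becomes a one-line translation argument. Since $R$ is a topological group, translation by $x$ is a homeomorphism, so $W = U - x$ is open; and since $x\in U$, we have $0\in W$, i.e. $W$ is an open neighbourhood of the origin. The key fact then gives $\mathcal N\subset W$, whence $x + \mathcal N\subset x + W = U$, which is exactly the assertion.

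To establish the key fact, I would fix $n\in\mathcal N$ together with an arbitrary neighbourhood $W$ of $0$ and prove $n\in W$. Because negation $w\mapsto -w$ is a homeomorphism of $R$ fixing $0$, the set $-W$ is again a neighbourhood of $0$, and therefore $n + (-W)$ is a neighbourhood of the point $n$. By the definition of closure, every neighbourhood of $n$ meets $\{0\}$, so $0\in n + (-W)$; this means $0 = n - w$ for some $w\in W$, that is, $n = w\in W$. As $W$ was an arbitrary neighbourhood of $0$, this shows $\mathcal N$ lies in each of them, completing the argument.

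I do not expect any serious obstacle: the proof uses only the continuity of the group operations (through the invariance of the neighbourhood filter of $0$ under negation) and makes no use of the content $\mu$ or of the ring-of-sets structure. The one point to keep straight is the bookkeeping between $W$ and $-W$, which is why I pass to $n + (-W)$ rather than $n + W$ when invoking membership of $n$ in the closure. The statement is precisely what is needed later to see that open sets in a $\mu$-topology are saturated with respect to translation by $\mathcal N$, so that the quotient by $\mathcal N$ in Proposition~\ref{p_assH} is well behaved.
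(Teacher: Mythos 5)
Your proof is correct and takes essentially the same route as the paper's: both unfold the definition of $\overline{\{0\}}$ using translation homeomorphisms, the paper applying the closure-point property at $-y$ (via $-y\in\mathcal N$) where you apply it at $n$ after negating the neighbourhood. Isolating the intermediate fact that $\mathcal N$ is contained in every neighbourhood of $0$ is only a presentational difference from the paper's inlined one-line argument.
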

	\begin{proof}
		Let $x\in U$ and $y\in\mathcal N$. Since $-y\in\mathcal N$ and $U-x-y$ is a neighbourhood of $-y$, we have $0\in U-x-y$ and, hence, $x+y\in U$. Thus, $x+\mathcal N\subset U$. 
	\end{proof}
	
	\begin{lemma}\label{l_assH1}
		Let $R$ be a topological ring, $R'$ be its topological subring, $\mathcal N = \overline{\{0\}}{}^R$, and $\mathcal N' = \overline{\{0\}}{}^{R'}$. Suppose $R = R' + \mathcal N$. Then $R'$ is dense in $R$ and there is a unique topological ring isomorphism $l\colon R'/\mathcal N'\to R/\mathcal N$ such that $l\circ j' = j|_{R'}$, where $j\colon R\to R/\mathcal N$ and $j'\colon R'\to R'/\mathcal N'$ are canonical maps. 
	\end{lemma}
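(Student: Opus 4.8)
The plan is to build $l$ directly from the restriction $j|_{R'}$ and then verify separately that it is a bijective ring homomorphism, a continuous map, and an open map. Throughout I would rely on three facts. First, $\mathcal N$ and $\mathcal N'$ are closed ideals (as recalled just before the statement), so $j$ and $j'$ are open, continuous, surjective ring homomorphisms onto Hausdorff rings. Second, for a subspace the closure of $\{0\}$ computed in $R'$ is the trace of the closure computed in $R$, which gives the crucial identity $\mathcal N' = \mathcal N\cap R'$. Third, Lemma~\ref{l_assH} tells us that every open $U\subset R$ is $\mathcal N$-saturated, i.e.\ $U+\mathcal N = U$.

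For density, I would take any nonempty open $U\subset R$ and any $x\in U$. Writing $x = x'+n$ with $x'\in R'$ and $n\in\mathcal N$ (possible since $R = R'+\mathcal N$) and using that $\mathcal N$ is a subgroup, Lemma~\ref{l_assH} yields $x' = x-n\in U$. Hence $U\cap R'\neq\varnothing$, so $R'$ is dense in $R$.

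Next I would construct the isomorphism. The restriction $j|_{R'}\colon R'\to R/\mathcal N$ is a ring homomorphism whose kernel is $R'\cap\mathcal N = \mathcal N'$ by the trace-of-closure identity, so it factors uniquely through $j'$ and produces an injective ring homomorphism $l\colon R'/\mathcal N'\to R/\mathcal N$ with $l\circ j' = j|_{R'}$; uniqueness of $l$ is automatic because $j'$ is surjective. Surjectivity of $l$ follows from $R = R'+\mathcal N$: applying the surjection $j$ and using $j(\mathcal N)=\{0\}$ gives $R/\mathcal N = j(R) = j(R')$, which is precisely the image of $l$. Continuity of $l$ is then immediate, since $l\circ j' = j|_{R'}$ is continuous and $j'$ is a quotient map.

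The main obstacle is showing that $l^{-1}$ is continuous, equivalently that $l$ is open. Given an open $W\subset R'/\mathcal N'$, I would set $V = (j')^{-1}(W)$, an open subset of $R'$ with $j'(V)=W$, so that $l(W) = j(V)$. Writing $V = U\cap R'$ with $U$ open in $R$, I may assume $U+\mathcal N = U$ by Lemma~\ref{l_assH}. I would then prove the key equality $j(V) = j(U)$: the inclusion $j(V)\subset j(U)$ is clear, and for the reverse, any $u\in U$ decomposes as $u = x'+n$ with $x'\in R'$ and $n\in\mathcal N$, whence $x' = u-n\in U$ (by saturation) lies in $U\cap R' = V$ and satisfies $j(x') = j(u)$. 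Since $j$ is open, $j(U)$ is open, so $l(W) = j(U)$ is open and $l$ is an open map. This makes $l$ a topological ring isomorphism with the required property, completing the proof.
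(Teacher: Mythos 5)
Your proposal is correct and follows essentially the same route as the paper: factor $j|_{R'}$ through $j'$ using $\mathcal N' = \mathcal N\cap R'$, get continuity from the quotient topology, and prove openness by writing $(j')^{-1}(W) = U\cap R'$ and using Lemma~\ref{l_assH} together with $R = R'+\mathcal N$ to show $j(U\cap R') = j(U)$, which is exactly the paper's argument. The only cosmetic difference is the density step, where you show every nonempty open set meets $R'$ via Lemma~\ref{l_assH}, while the paper notes $\overline{R'}\supset \overline{\{x\}}{}^{R} = x+\mathcal N$ for $x\in R'$ --- two phrasings of the same fact.
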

	\begin{proof}
		Let $k = j|_{R'}$. Since $R = R' + \mathcal N$ and $\mathcal N' = \mathcal N\cap R'$, there is a unique ring isomorphism $l\colon R'/\mathcal N'\to R/\mathcal N$ such that $l\circ j' = k$. By the definition of quotient topology, $l$ is continuous. Let $V\subset R'/\mathcal N'$ be an open set and $W = j^{\prime-1}(V)$. Since $W$ is open in $R'$, there is an open subset $U$ of $R$ such that $W = U\cap R'$. Let $x\in U$. Then $x-y\in \mathcal N$ for some $y\in R'$. As $y\in U$ by Lemma~\ref{l_assH}, we have $y\in W$. This implies that $j(x) \in k(W)$ because $j(x) = k(y)$. Hence, $j(U) = k(W)$. Since $j$ is open as a quotient map of a topological group, it follows that the set $l(V) = k(W)$ is open. Thus, the map $l$ is open and, hence, is a homeomorphism. For every $x\in R'$, we have $\overline{R'}\supset \overline{\{x\}}{}^R = x + \mathcal N$. This implies that $\overline{R'} = R$, i.e., $R'$ is dense in $R$.   
	\end{proof}
	
	\begin{proof}[Proof of Proposition~$\ref{p_assH}$]
		Since $\mathcal N = \mathcal N_\nu$ by Theorem~\ref{p_Ncompl}, the equalities $\overline{\{0\}}{}^R = \mathcal N$ and $\overline{\{0\}}{}^{R'} = \mathcal N_\mu$ follow from~(\ref{VRmuU}) and statement~(a). As $D_\nu = \Delta(D_\mu,\mathcal N)$, we have $R = R' + \mathcal N$. We claim that
		\begin{equation}\label{V=V}
		V_{R',\mu}(U) = V_{R,\nu}(U)\cap D_\mu 
		\end{equation}
		for every set $U$. Let $V = V_{R,\nu}(U)$, $V' = V_{R',\mu}(U)$, and $A'\in V'$. Let $A\in D_\nu$ be such that $A\subset A'$. Then $A = B\Delta N$ for some $B\in D_\mu$ and $N\in \mathcal N$. By~(\ref{distr}), it follows that $A = A\cap A' = B'\Delta N'$, where $B' = B\cap A'$ and $N' = N\cap A'$. Since $B'\in D_\mu$ and $B'\subset A'$, we have $\mu(B')\in U$. This implies that $\nu(A)\in U$ because $N'\in\mathcal N$ and, hence, $\nu(A) = \mu(B')$. This means that $A'\in V$. We thus have $V'\subset V$, whence (\ref{V=V}) follows by statement~(b). Equality~(\ref{V=V}) ensures that $R'$ is a topological subring of $R$. The density of $R'$ in $R$ and the existence and uniqueness of $l$ therefore follow from Lemma~\ref{l_assH1}.  
	\end{proof}

\section{Completion of measures}
\label{s6}
	
	As shown by Example~\ref{e4}, the $\mathcal N$-completion of a positive measure $\mu$ may fail to be defined on a $\delta$-ring even if $D_\mu$ is a $\sigma$-ring. It turns out that conditions ensuring that the domain of the $\mathcal N$-completion of an $\mathfrak A$-valued measure is a $\delta$-ring can be conveniently formulated without any reference to the group $\mathfrak A$. This is done using the next definition that captures the typical structure of null sets of measures. 
	
	\begin{definition}\label{d_measurelike}
		A map $\mu$ is called measure-like if it admits completion, $D_\mu$ is a $\delta$-ring, and $\sigma(\mathcal N_\mu)\cap D_\mu \subset \mathcal N_\mu$. 
	\end{definition}
	
	If $\mu$ is a measure-like map, then
	\begin{equation}\label{mlike}
	\sigma(\mathcal N_\mu)\cap \hat D_\mu = \mathcal N_\mu.
	\end{equation}
	Indeed, if $N\in \sigma(\mathcal N_\mu)\cap \hat D_\mu$, then $N\in D_\mu$ by Lemma~\ref{ll1}(iii) and, hence, $N\in \mathcal N_\mu$. Since every $N\in \mathcal N_\mu$ obviously belongs to $\sigma(\mathcal N_\mu)\cap \hat D_\mu$, we obtain~(\ref{mlike}).
	
	\begin{definition}\label{d_compat}
		Let $\mu$ be a map. A set $\mathcal N$ is called $\mu$-compatible if it is a $\delta$-ring and $\sigma(\mathcal N)\cap \hat D_\mu\subset \mathcal N$.
	\end{definition}
	
	If $\mu$ is a measure-like map, then $\mathcal N_\mu$ is $\mu$-compatible by~(\ref{mlike}).
	
	\begin{proposition}\label{p_measurelike}
		Let $\mathcal N$ be a completion ring for $\mu$ and $\nu = \mathfrak C_{\mathcal N}\mu$. If $\nu$ is measure-like, then $\mathcal N$ is $\mu$-compatible. If $D_\mu$ is a $\delta$-ring and $\mathcal N$ is $\mu$-compatible, then both $\mu$ and $\nu$ are measure-like.
	\end{proposition}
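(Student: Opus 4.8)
The plan is to build everything on the structural description of $\nu$ from Theorem~\ref{p_Ncompl}, namely $\mathcal N_\nu = \mathcal N$ and $D_\nu = \Delta(D_\mu,\mathcal N)$, together with the inclusion $D_\mu\subset D_\nu$. The first assertion is then immediate: if $\nu$ is measure-like, then $\mathcal N = \mathcal N_\nu$ is a $\delta$-ring by Proposition~\ref{p_Nring} and, by~(\ref{mlike}) applied to $\nu$, satisfies $\sigma(\mathcal N)\cap\hat D_\nu = \mathcal N$; since $\hat D_\mu\subset\hat D_\nu$, this restricts to $\sigma(\mathcal N)\cap\hat D_\mu\subset\mathcal N$, so $\mathcal N$ is $\mu$-compatible. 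For the converse I would first dispose of the claim that $\mu$ is measure-like, which is routine: $\mu$ admits completion and $D_\mu$ is a $\delta$-ring by hypothesis, and from $\mathcal N_\mu = \mathcal N\cap D_\mu\subset\mathcal N$ one gets $\sigma(\mathcal N_\mu)\cap D_\mu\subset\sigma(\mathcal N)\cap\hat D_\mu\subset\mathcal N$, hence $\sigma(\mathcal N_\mu)\cap D_\mu\subset\mathcal N\cap D_\mu = \mathcal N_\mu$.

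The substance of the proof lies in showing that $\nu$ is measure-like, and the step I expect to be the main obstacle is verifying that $D_\nu$ is a $\delta$-ring, since $D_\nu$ is a priori only a ring and the adjoined sets of $\mathcal N$ may behave poorly under countable intersections. Given a nonempty countable family $\fm{\mb A}{I}$ in $D_\nu$, I would write $\mb A(i) = B_i\Delta N_i$ with $B_i\in D_\mu$ and $N_i\in\mathcal N$, put $C = \bigcap_{i\in I}\mb A(i)$, and set $B = \bigcap_{i\in I}B_i$, which lies in $D_\mu$ because $D_\mu$ is a $\delta$-ring. By~(\ref{01}) one has $C\Delta B\subset\bigcup_{i\in I}N_i$, and it suffices to prove $C\Delta B\in\mathcal N$, for then $C = B\Delta(C\Delta B)\in\Delta(D_\mu,\mathcal N) = D_\nu$ by~(\ref{assoc}).

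To prove $C\Delta B\in\mathcal N$, I would first check that $(C\Delta B)\cap N_i\in\mathcal N$ for every $i$: the set $C\cap N_i = \bigcap_{j\in I}(\mb A(j)\cap N_i)$ is a countable intersection of members of $\mathcal N$ (each by Lemma~\ref{l_cring}) and hence lies in the $\delta$-ring $\mathcal N$, while $B\cap N_i\in\mathcal N$ by the completion-ring property, so $(C\Delta B)\cap N_i = (C\cap N_i)\Delta(B\cap N_i)\in\mathcal N$ by~(\ref{distr}). Combined with $C\Delta B\subset\bigcup_{i\in I}N_i$, this yields $C\Delta B = \bigcup_{i\in I}\bigl((C\Delta B)\cap N_i\bigr)\in\sigma(\mathcal N)$. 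Fixing $i_0\in I$ and using $C\Delta B\subset B_{i_0}\cup N_{i_0}$, I would then split $C\Delta B$ along $B_{i_0}$: the part $(C\Delta B)\cap B_{i_0}$ lies in $\sigma(\mathcal N)\cap\hat D_\mu$, hence in $\mathcal N$ by $\mu$-compatibility, whereas $(C\Delta B)\setminus B_{i_0}$ lies in $\sigma(\mathcal N)$ and is contained in $N_{i_0}\in\mathcal N$, hence lies in $\mathcal N$ by Lemma~\ref{ll1}(iii). Therefore $C\Delta B\in\mathcal N$, and $D_\nu$ is a $\delta$-ring. This splitting is the technical heart of the argument, since $B_{i_0}\cup N_{i_0}$ itself need not belong to $\hat D_\mu$.

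It remains to verify the last defining property, $\sigma(\mathcal N_\nu)\cap D_\nu\subset\mathcal N_\nu$, i.e.\ $\sigma(\mathcal N)\cap D_\nu\subset\mathcal N$. Here, given $X\in\sigma(\mathcal N)\cap D_\nu$ written as $X = B\Delta N$ with $B\in D_\mu$ and $N\in\mathcal N$, the set $B = X\Delta N$ again lies in $\sigma(\mathcal N)\cap\hat D_\mu\subset\mathcal N$, so $X = B\Delta N\in\mathcal N$. Since $\nu$ admits completion and $\mathcal N_\nu = \mathcal N$ by Theorem~\ref{p_Ncompl}, this shows $\nu$ is measure-like and completes the argument.
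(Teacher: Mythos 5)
Your proposal is correct and follows essentially the same route as the paper: the easy direction and the measure-likeness of $\mu$ are handled identically, and your verification that $D_\nu$ is a $\delta$-ring --- approximating $\bigcap_{i\in I}\mb A(i)$ by $B = \bigcap_{i\in I}B_i\in D_\mu$, showing the defect $C\Delta B$ lies in $\sigma(\mathcal N)$, and pushing it into $\mathcal N$ by splitting along a $D_\mu$-approximant ($B_{i_0}$) and invoking $\mu$-compatibility together with Lemma~\ref{ll1}(iii) --- is exactly the mechanism behind the paper's intermediate inclusion $\sigma(\mathcal N)\cap\hat D_\nu\subset\mathcal N$, which the paper isolates once and reuses while you inline it. The only cosmetic difference is your last step, where you obtain $\sigma(\mathcal N)\cap D_\nu\subset\mathcal N$ directly from $B = X\Delta N\in\sigma(\mathcal N)\cap\hat D_\mu$ rather than by specializing that intermediate inclusion.
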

	\begin{proof}
		By Theorem~\ref{p_Ncompl}, we have
		\begin{equation}\label{NN}
		\mathcal N = \mathcal N_\nu.
		\end{equation}
		Let $\nu$ be measure-like. Then $\mathcal N_\nu$ is a $\delta$-ring by Proposition~\ref{p_Nring}, and it follows from~(\ref{mlike}) and the inclusion $D_\mu\subset D_\nu$ that $\sigma(\mathcal N_\nu)\cap \hat D_\mu\subset \mathcal N_\nu$. In view of~(\ref{NN}), we conclude that $\mathcal N$ is $\mu$-compatible. 
		
		Let $D_\mu$ be a $\delta$-ring and $\mathcal N$ be $\mu$-compatible. Then $\mu$ is measure-like because
		\[
		\sigma(\mathcal N_\mu)\cap D_\mu \subset (\sigma(\mathcal N)\cap\hat D_\mu)\cap D_\mu\subset \mathcal N\cap D_\mu = \mathcal N_\mu
		\]
		by Definitions~\ref{d_cring} and~\ref{d_compat}. To prove that $\nu$ is measure-like, we first show that 
		\begin{equation}\label{sigmacap1}
		\sigma(\mathcal N)\cap\hat D_\nu\subset \mathcal N.
		\end{equation}
		Let $N \in \sigma(\mathcal N)\cap\hat D_\nu$ and let $A\in D_\nu$ be such that $N\subset A$. Let $B\in D_\mu$ be such that $A\Delta B\in\mathcal N$. We have $N\subset N'$, where $N' = (N\cap B)\cup (A\Delta B)$. Since $N\cap B\in \sigma(\mathcal N)$ by Lemma~\ref{l_NcapB}, it follows from the $\mu$-compatibility of $\mathcal N$ that $N\cap B\in\mathcal N$ and, hence, $N'\in \mathcal N$.
		Lemma~\ref{ll1}(iii) now ensures that $N\in\mathcal N$ and (\ref{sigmacap1}) is proved.
		
		We now verify that $D_\nu$ is a $\delta$-ring. Let $\mb A\colon I\to D_\nu$ be a nonempty countable family and $A = \bigcap_{i\in I}\mb A(i)$. Let $\mb B\colon I\to D_\mu$ be such that $\mb A(i)\Delta\mb B(i)\in\mathcal N$ for every $i\in I$ and let $B = \bigcap_{i\in I}\mb B(i)$. Since $D_\mu$ is a $\delta$-ring, we have $B\in D_\mu$. Let $N = A\Delta B$. By~(\ref{01}), $N\subset N'$, where $N' = \bigcup_{i\in I}\mb A(i)\Delta \mb B(i)$. Since $N\in \sigma(D_\nu)$ and $N'\in \sigma(\mathcal N)$, Lemmas~\ref{l_NcapB} and~\ref{l_cring} imply that $N\in \sigma(\mathcal N)$. We have $N\in\hat D_\nu$ because $N\subset A\cup B\subset \mb A(i)\cup B$ for every $i\in I$. By~(\ref{sigmacap1}), we conclude that $N\in \mathcal N$ and, hence, $A\in D_\nu$. Thus, $D_\nu$ is a $\delta$-ring. Since $\nu$ admits completion by Theorem~\ref{p_Ncompl}, it follows from~(\ref{NN}) and~(\ref{sigmacap1}) that $\nu$ is measure-like.   
	\end{proof}
	
	\begin{corollary}\label{c_sigma}
		Let $\mathcal N$ be a completion ring for $\mu$, $\mathcal N$ be a $\sigma$-ring, $D_\mu$ be a $\delta$-ring, and $\nu = \mathfrak C_{\mathcal N}\mu$. Then $\mu$ and $\nu$ are measure-like. If $D_\mu$ is a $\sigma$-ring, then so is $D_\nu$. 
	\end{corollary}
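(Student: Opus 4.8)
The plan is to reduce the first assertion to Proposition~\ref{p_measurelike}. Since $D_\mu$ is a $\delta$-ring, it suffices to check that the $\sigma$-ring $\mathcal N$ is $\mu$-compatible in the sense of Definition~\ref{d_compat}. This is immediate: every $\sigma$-ring is a $\delta$-ring, and $\sigma(\mathcal N) = \mathcal N$ because $\mathcal N$ is already a $\sigma$-ring, so $\sigma(\mathcal N)\cap\hat D_\mu = \mathcal N\cap\hat D_\mu\subset\mathcal N$. Having verified $\mu$-compatibility, Proposition~\ref{p_measurelike} yields at once that both $\mu$ and $\nu$ are measure-like. In particular, $D_\nu$ is a $\delta$-ring.

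It remains to treat the case when $D_\mu$ is a $\sigma$-ring. Since $D_\nu$ is already known to be a $\delta$-ring, by Lemma~\ref{ll1}(iii) it will be enough to show that every countable union of elements of $D_\nu$ is contained in some element of $D_\nu$; the union then automatically lies in $\sigma(D_\nu)$ and hence in $D_\nu$. So I would take a countable family $\mb A\colon I\to D_\nu$ and, using the representation $D_\nu = \Delta(D_\mu,\mathcal N)$ together with~(\ref{DeltaQN}), write $\mb A(i) = \mb B(i)\Delta \mb N(i)$ with $\mb B(i)\in D_\mu$ and $\mb N(i)\in\mathcal N$ for each $i\in I$.

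The key observation is that the ``base'' and ``null'' parts can be collected separately: since $D_\mu$ is a $\sigma$-ring, $B = \bigcup_{i\in I}\mb B(i)\in D_\mu$, and since $\mathcal N$ is a $\sigma$-ring, $N = \bigcup_{i\in I}\mb N(i)\in\mathcal N$. As $\mb A(i)\subset \mb B(i)\cup \mb N(i)\subset B\cup N$ for every $i$, the union $\bigcup_{i\in I}\mb A(i)$ is contained in $B\cup N$, which belongs to $D_\nu$ because $D_\mu\cup\mathcal N\subset D_\nu$ by Theorem~\ref{p_Ncompl} and $D_\nu$ is a ring. Lemma~\ref{ll1}(iii) then gives $\bigcup_{i\in I}\mb A(i)\in D_\nu$, so $D_\nu$ is a $\sigma$-ring. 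There is no real obstacle here beyond spotting that one should dominate the union by a single element of $D_\nu$ and invoke Lemma~\ref{ll1}(iii), rather than trying to manipulate symmetric differences of the whole family directly; the only mild subtlety is that passing to the representation $\mb A(i)=\mb B(i)\Delta\mb N(i)$ requires a choice of $\mb B$ and $\mb N$, which is harmless for a countable index set.
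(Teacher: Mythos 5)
Your proposal is correct and follows essentially the same route as the paper's proof: verify that the $\sigma$-ring $\mathcal N$ is trivially $\mu$-compatible and invoke Proposition~\ref{p_measurelike}, then write $\mb A(i) = \mb B(i)\Delta\mb N(i)$, collect $B = \bigcup_{i\in I}\mb B(i)\in D_\mu$ and $N = \bigcup_{i\in I}\mb N(i)\in\mathcal N$, and dominate the union by $B\cup N\in D_\nu$. The only cosmetic difference is that you conclude via Lemma~\ref{ll1}(iii) (union lies in $\sigma(D_\nu)$ and is contained in an element of the $\delta$-ring $D_\nu$) where the paper applies Lemma~\ref{ll1}(i) directly; the two steps are interchangeable here.
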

	\begin{proof}
		By Definition~\ref{d_compat}, $\mathcal N$ is $\mu$-compatible and, therefore, $\mu$ and $\nu$ are measure-like by Proposition~\ref{p_measurelike}. In particular, $D_\nu$ is a $\delta$-ring. Suppose $D_\mu$ is a $\sigma$-ring. Let $\mb A\colon I\to D_\nu$ be a countable family and let $\mb B\colon I\to D_\mu$ and $\mb N\colon I\to\mathcal N$ be such that $\mb A(i) = \mb B(i)\Delta\mb N(i)$ for every $i\in I$. Then $\mb A(i)\subset B\cup N$ for every $i\in I$, where $B = \bigcup_{i\in I}\mb B(i)$ and $N = \bigcup_{i\in I}\mb N(i)$. 
		Since $D_\mu$ and $\mathcal N$ are $\sigma$-rings, we have $B\in D_\mu$ and $N\in\mathcal N$, whence $B\cup N\in D_\nu$. By Lemma~\ref{ll1}(i), we conclude that the set $\bigcup_{i\in I}\mb A(i)$ belongs to $D_\nu$. Thus, $D_\nu$ is a $\sigma$-ring.	
	\end{proof}
	
	The next statement gives a simple sufficient condition for the $\mu$-compatibility of a $\delta$-ring $\mathcal N$.
	
	\begin{proposition}\label{p_comp}
		Let $\mu$ be a measure-like map and $\mathcal N$ be a $\delta$-ring such that $\mathcal N_\mu\subset\mathcal N$. If $\mathcal N$ is $\mu$-regular, then $\mathcal N$ is $\mu$-compatible.
	\end{proposition}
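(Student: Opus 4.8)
The plan is to verify the only nontrivial part of $\mu$-compatibility. Since $\mathcal N$ is assumed to be a $\delta$-ring, by Definition~\ref{d_compat} it remains to establish the inclusion $\sigma(\mathcal N)\cap\hat D_\mu\subset\mathcal N$. So I would fix an arbitrary $N\in\sigma(\mathcal N)\cap\hat D_\mu$ and aim to show that $N\in\mathcal N$. Choose $B\in D_\mu$ with $N\subset B$, which exists because $N\in\hat D_\mu$.

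First I would decompose $N$. Since $\mathcal N$ is a $\delta$-ring and $N\in\sigma(\mathcal N)$, Lemma~\ref{ll1}(ii) lets me write $N=\bigcup_{i\in I}\mb N(i)$ for some countable family $\mb N\colon I\to\mathcal N$. Each piece satisfies $\mb N(i)\subset N\subset B$, so $\mb N(i)\in\mathcal N\cap\hat D_\mu$, and the $\mu$-regularity hypothesis gives $\mb N(i)\in\hat{\mathcal N}_\mu$; thus I may pick $\mb M(i)\in\mathcal N_\mu$ with $\mb N(i)\subset\mb M(i)$. This is the step where regularity does its essential work: it converts the abstract pieces $\mb N(i)\in\mathcal N$ into genuine $\mu$-null supersets $\mb M(i)$, bridging $\mathcal N$ and $\mathcal N_\mu$.

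The key device is then to cut everything down by the single dominating set $B$. Replacing $\mb M(i)$ by $\mb M(i)\cap B$, I note that $\mb M(i)\cap B\in D_\mu$ and $\mb M(i)\cap B\subset\mb M(i)$, so $\mb M(i)\cap B\in\mathcal N_\mu$, while $\mb M(i)\cap B\subset B$. By Lemma~\ref{ll1}(i) applied to the $\delta$-ring $D_\mu$, the set $M=\bigcup_{i\in I}(\mb M(i)\cap B)$ belongs to $D_\mu$. Being a countable union of elements of $\mathcal N_\mu$, this $M$ lies in $\sigma(\mathcal N_\mu)$, and since $M\in D_\mu$ the measure-like hypothesis on $\mu$ (that is, $\sigma(\mathcal N_\mu)\cap D_\mu\subset\mathcal N_\mu$, cf.~(\ref{mlike})) forces $M\in\mathcal N_\mu\subset\mathcal N$. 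Because $N=\bigcup_i\mb N(i)\subset\bigcup_i(\mb M(i)\cap B)=M$ and $N\in\sigma(\mathcal N)$ with $M\in\mathcal N$, Lemma~\ref{ll1}(iii) finally yields $N\in\mathcal N$, completing the argument.

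I expect the main obstacle to be precisely that $\mathcal N$ is only a $\delta$-ring, so neither the countable union $\bigcup_i\mb N(i)$ nor $\bigcup_i\mb M(i)$ can be expected to return to $\mathcal N$ directly; one cannot simply dominate $N$ by a member of $\mathcal N$ without extra care. Intersecting with the fixed $B\in D_\mu$ is what keeps the relevant union inside $D_\mu$, thereby making the measure-like property applicable, while the inclusion $N\subset B$ guarantees that no part of $N$ is lost in the process.
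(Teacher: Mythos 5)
Your proof is correct and follows essentially the same route as the paper: decompose $N\in\sigma(\mathcal N)$ into a countable family via Lemma~\ref{ll1}(ii), use $\mu$-regularity to find $\mu$-null supersets, intersect with the dominating set in $D_\mu$ so that the measure-like property applies to the union, and conclude with Lemma~\ref{ll1}(iii). The only cosmetic difference is that you invoke Lemma~\ref{ll1}(i) to place the union $M$ in $D_\mu$ and then apply Definition~\ref{d_measurelike} directly, whereas the paper appeals to the equivalent formulation~(\ref{mlike}) with $\hat D_\mu$.
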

	\begin{proof}
		Let $N \in \sigma(\mathcal N)\cap\hat D_\mu$ and let $C\in D_\mu$ be such that $N\subset C$. Since $\mathcal N$ is a $\delta$-ring, Lemma~\ref{ll1}(ii) implies that $N = \bigcup_{i\in I}\mb N(i)$, where $\mb N\colon I\to \mathcal N$ is a countable family. For every $i\in I$, the set $\mb N(i)$ belongs to $\mathcal N\cap \hat D_\mu$ and, hence, to $\hat{\mathcal N}_\mu$ by the $\mu$-regularity of $\mathcal N$. Let $\mb N'\colon I\to\mathcal N_\mu$ be such that $\mb N(i)\subset \mb N'(i)\subset C$ for every $i\in I$. Let $N' = \bigcup_{i\in I} \mb N'(i)$. Then $N'\subset C$ and, hence, $N'\in \hat D_\mu$. Since $N'\in \sigma(\mathcal N_\mu)$, it follows from~(\ref{mlike}) that $N'\in \mathcal N_\mu$ and, therefore, $N'\in\mathcal N$. By Lemma~\ref{ll1}(iii), we conclude that $N\in\mathcal N$ because $N\subset N'$ and $N\in\sigma(\mathcal N)$. This proves that $\sigma(\mathcal N)\cap\hat D_\mu\subset \mathcal N$ and, hence, $\mathcal N$ is $\mu$-compatible. 
	\end{proof}
	
	\begin{corollary}\label{c_mucomp}
		If $\mu$ is a measure-like map, then the sets $\hat{\mathcal N}_\mu$, $\mathcal N_0[\mu]$, and $\mathcal N[\mu]$ are $\mu$-compatible.
	\end{corollary}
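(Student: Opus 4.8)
The plan is to deduce everything from Proposition~\ref{p_comp}, which states that for a measure-like $\mu$, any $\mu$-regular $\delta$-ring $\mathcal N$ with $\mathcal N_\mu\subset\mathcal N$ is automatically $\mu$-compatible. Thus it suffices to verify, for each of the three sets $\hat{\mathcal N}_\mu$, $\mathcal N_0[\mu]$, and $\mathcal N[\mu]$, the three hypotheses of that proposition: (a) it is a $\delta$-ring, (b) it contains $\mathcal N_\mu$, and (c) it is $\mu$-regular.

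Conditions (b) and (c) come almost for free from what was already established at the beginning of Section~\ref{s4}. The inclusions $\mathcal N_\mu\subset \hat{\mathcal N}_\mu\subset \mathcal N[\mu]$ and $\mathcal N_\mu\subset \mathcal N_0[\mu]\subset \mathcal N[\mu]$ give (b) immediately, and they simultaneously show that each of the three sets is contained in $\mathcal N[\mu]$. Proposition~\ref{p_reg1} then yields (c) for all three at once, since any subset of $\mathcal N[\mu]$ is $\mu$-regular.

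The substantive step is (a), the $\delta$-ring property, and here the three sets behave slightly differently. Because $\mu$ is measure-like, $D_\mu$ is a $\delta$-ring, so Proposition~\ref{p_Nring} guarantees that $\mathcal N_\mu$ is itself a $\delta$-ring; this is the fact that drives the argument. For $\hat{\mathcal N}_\mu$ and $\mathcal N[\mu]$, which are hereditary rings, the $\delta$-ring property is immediate: the intersection of any nonempty countable subfamily is contained in one of its members and hence, by heredity, lies in the set. For $\mathcal N_0[\mu]$ one argues directly from the defining equality~(\ref{N0mu}): given a countable family $\mb N$ in $\mathcal N_0[\mu]$ with intersection $N$ and any $B\in D_\mu$, one writes $N\cap B = \bigcap_i (\mb N(i)\cap B)$, where each $\mb N(i)\cap B$ lies in $\mathcal N_\mu$; since $\mathcal N_\mu$ is a $\delta$-ring, this countable intersection belongs to $\mathcal N_\mu$, whence $N\in\mathcal N_0[\mu]$.

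With (a), (b), and (c) in hand for each set, Proposition~\ref{p_comp} applies three times and delivers the asserted $\mu$-compatibility. I do not expect any genuine obstacle: the only point requiring attention is recognizing that the $\delta$-ring property of $\mathcal N_0[\mu]$ is inherited from that of $\mathcal N_\mu$, which in turn rests on the defining hypothesis that $D_\mu$ is a $\delta$-ring.
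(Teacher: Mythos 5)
Your proof is correct and follows the paper's own argument essentially verbatim: establish the $\delta$-ring property (heredity for $\hat{\mathcal N}_\mu$ and $\mathcal N[\mu]$; for $\mathcal N_0[\mu]$, the fact that $\mathcal N_\mu$ is a $\delta$-ring by Proposition~\ref{p_Nring} together with the defining equality~(\ref{N0mu})), then invoke Propositions~\ref{p_reg1} and~\ref{p_comp}. Your explicit verification of the inclusion $\mathcal N_\mu\subset\mathcal N$ and the intersection computation $N\cap B=\bigcap_i(\mb N(i)\cap B)$ merely spells out details the paper leaves implicit.
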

	\begin{proof}
		Since $\mathcal N_\mu$ is a $\delta$-ring by Proposition~\ref{p_Nring}, it follows from~(\ref{N0mu}) that $\mathcal N_0[\mu]$ is a $\delta$-ring. The sets $\hat{\mathcal N}_\mu$ and $\mathcal N[\mu]$ are hereditary rings and, hence, are $\delta$-rings. The required statement therefore follows from Propositions~\ref{p_reg1} and~\ref{p_comp}.
	\end{proof}

	\begin{corollary}
		If $\mu$ is a measure-like map, then so are $\mathfrak C\mu$, $\mathfrak S\mu$, and $\overline{\mathfrak C}\mu$.
	\end{corollary}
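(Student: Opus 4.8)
The plan is to recognize this corollary as an immediate consequence of the two principal results of this section, namely Proposition~\ref{p_measurelike} and Corollary~\ref{c_mucomp}, together with the identification of the natural completions as specific $\mathcal N$-completions established in Section~\ref{s4}. No new combinatorial work on $\sigma$-rings or $\delta$-rings should be required; the task is purely one of assembling the pieces correctly.

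First I would record that, since $\mu$ is measure-like, its domain $D_\mu$ is a $\delta$-ring by Definition~\ref{d_measurelike}. Next I would invoke Corollary~\ref{c_mucomp}, which tells us precisely that the three sets $\hat{\mathcal N}_\mu$, $\mathcal N_0[\mu]$, and $\mathcal N[\mu]$ are all $\mu$-compatible. Recall also from the discussion opening Section~\ref{s4} that each of these three sets is a completion ring for $\mu$, and that the corresponding $\mathcal N$-completions are by definition $\mathfrak C\mu$, $\mathfrak S\mu$, and $\overline{\mathfrak C}\mu$.

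With these facts in hand, I would apply Proposition~\ref{p_measurelike} three times, once for each choice $\mathcal N \in \{\hat{\mathcal N}_\mu, \mathcal N_0[\mu], \mathcal N[\mu]\}$. The hypotheses of that proposition's second assertion---that $D_\mu$ be a $\delta$-ring and that $\mathcal N$ be $\mu$-compatible---are exactly what the previous paragraph supplies, so in each case the conclusion is that $\nu = \mathfrak C_{\mathcal N}\mu$ is measure-like. Since these three $\mathcal N$-completions are $\mathfrak C\mu$, $\mathfrak S\mu$, and $\overline{\mathfrak C}\mu$, the corollary follows.

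Since the substantive content has already been proved, I do not anticipate any genuine obstacle. The only point deserving momentary care is that Proposition~\ref{p_measurelike} requires $\mathcal N$ to be a completion ring for $\mu$ in addition to being $\mu$-compatible; one should therefore explicitly recall from Section~\ref{s4} that $\hat{\mathcal N}_\mu$, $\mathcal N_0[\mu]$, and $\mathcal N[\mu]$ do qualify as completion rings, rather than taking this silently for granted.
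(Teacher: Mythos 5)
Your proof is correct and matches the paper's own argument, which likewise derives the corollary directly from Proposition~\ref{p_measurelike} and Corollary~\ref{c_mucomp}; you merely spell out the details (the completion-ring status of $\hat{\mathcal N}_\mu$, $\mathcal N_0[\mu]$, and $\mathcal N[\mu]$ from Section~\ref{s4}, and the $\delta$-ring hypothesis on $D_\mu$) that the paper leaves implicit.
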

	\begin{proof}
		The statement follows at once from Proposition~\ref{p_measurelike} and Corollary~\ref{c_mucomp}. 
	\end{proof}
	
	We now use the results concerning measure-like maps to obtain a criterion for the $\mathcal N$-completion of a measure to be a measure.
	
	\begin{proposition}\label{p_mlike}
		Let $\mathfrak A$ be a HTAG and $\mu$ be an $\mathfrak A$-valued measure. Then $\mu$ is measure-like. 
	\end{proposition}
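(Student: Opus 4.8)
The plan is to verify the three requirements in Definition~\ref{d_measurelike} one by one. Since $\mu$ is an $\mathfrak A$-valued measure, its domain $D_\mu$ is a $\delta$-ring by Definition~\ref{dd5}, so the second requirement holds immediately. For the first requirement, I would note that condition~$(\sigma)$, applied to finite partitions (a finite index set is countable, and summability over a finite set is trivial), shows that $\mu$ is an $\mathfrak A$-valued additive function; since a $\delta$-ring is a ring, $\mu$ is an $\mathfrak A$-valued content. Theorem~\ref{ll10} then guarantees that $\mu$ admits completion and that $\mathcal N_\mu$ is a ring. Thus the only substantive point is the inclusion $\sigma(\mathcal N_\mu)\cap D_\mu\subset \mathcal N_\mu$.

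For this inclusion I would first record that, because $D_\mu$ is a $\delta$-ring, Proposition~\ref{p_Nring} makes $\mathcal N_\mu$ a $\delta$-ring as well. Lemma~\ref{ll1}(ii) then describes $\sigma(\mathcal N_\mu)$ as the collection of all countable unions of $\mu$-null sets. So given $N\in \sigma(\mathcal N_\mu)\cap D_\mu$, I may write $N = \bigcup_{i\in I}\mb N(i)$ with $\mb N\colon I\to\mathcal N_\mu$ countable. To prove $N\in\mathcal N_\mu$, I take an arbitrary $N'\in D_\mu$ with $N'\subset N$ and aim to show $\mu(N') = 0$.

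The key step---and the place where $\sigma$-additivity is essential---is to realize $N'$ as a countable partition into $\mu$-null sets. Intersecting the union above with $N'$ gives $N' = \bigcup_{i\in I}(N'\cap\mb N(i))$, where each $N'\cap \mb N(i)$ lies in $D_\mu$ and is contained in the null set $\mb N(i)$, hence is itself $\mu$-null (any $D_\mu$-subset of it is a $D_\mu$-subset of $\mb N(i)$). Disjointifying this family exactly as in the proof of Lemma~\ref{l_partition}---using a linear order on the countable set $I$ for which every initial segment $\{j\in I: j\leq i\}$ is finite and setting $\mb M(i) = (N'\cap\mb N(i))\setminus\bigcup_{j<i}(N'\cap\mb N(j))$---produces a disjoint family $\mb M\colon I\to D_\mu$ with $\bigcup_{i\in I}\mb M(i) = N'$ and $\mb M(i)\subset N'\cap\mb N(i)$; each $\mb M(i)$ is therefore $\mu$-null, so $\mu(\mb M(i)) = 0$. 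Since $\mb M$ is a countable partition of $N'$ into elements of $D_\mu$, condition~$(\sigma)$ yields $\mu(N') = \sum_{i\in I}\mu(\mb M(i)) = 0$.

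As $N'\in D_\mu$ with $N'\subset N$ was arbitrary, this shows $N\in\mathcal N_\mu$ and establishes $\sigma(\mathcal N_\mu)\cap D_\mu\subset\mathcal N_\mu$, completing the verification that $\mu$ is measure-like. I expect the disjointification step to be the main technical obstacle, since countable (as opposed to merely finite) additivity is precisely what lets us pass from the nullity of the individual pieces to the nullity of $N'$; everything else is a direct reading of the definitions together with Theorem~\ref{ll10} and Proposition~\ref{p_Nring}.
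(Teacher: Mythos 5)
Your proof is correct and follows essentially the same route as the paper: reduce to showing $\sigma(\mathcal N_\mu)\cap D_\mu\subset\mathcal N_\mu$ via Theorem~\ref{ll10}, write $N$ as a countable union of null sets using Proposition~\ref{p_Nring} and Lemma~\ref{ll1}(ii), and apply $\sigma$-additivity to a countable partition of $N'$ into $D_\mu$-sets of measure zero. The only (immaterial) difference is that the paper disjointifies the null sets $\mb N(i)$ first, obtaining a partition of $N$ and then intersecting with $N'$, whereas you intersect with $N'$ first and then disjointify; both yield the same partition argument.
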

	\begin{proof}
		By Theorem~\ref{ll10}, $\mu$ admits completion, and we only need to show that $\sigma(\mathcal N_\mu)\cap D_\mu \subset \mathcal N_\mu$. Let $N \in \sigma(\mathcal N_\mu)\cap D_\mu$. As $\mathcal N_\mu$ is a $\delta$-ring by Proposition~\ref{p_Nring}, Lemma~\ref{ll1}(ii) implies that there is a countable partition $\mb N\colon I\to \mathcal N_\mu$ of $N$. Let $N'\in D_\mu$ be such that $N'\subset N$. Since $\mb N(i)\in\mathcal N_\mu$, we have $\mu(\mb N(i)\cap N') = 0$ for every $i\in I$ and, hence, $\mu(N') = 0$ by the $\sigma$-additivity of $\mu$. Thus, $N\in\mathcal N_\mu$.  
	\end{proof}

	\begin{theorem}\label{t_meas}
		Let $\mathfrak A$ be a HTAG, $\mu$ be an $\mathfrak A$-valued measure, and $\mathcal N$ be a completion ring for $\mu$. Then $\mathfrak C_{\mathcal N}\mu$ is an $\mathfrak A$-valued measure if and only if $\mathcal N$ is $\mu$-compatible.
	\end{theorem}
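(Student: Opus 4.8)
The plan is to prove the two implications separately, obtaining the forward direction almost for free from the structural theory of measure-like maps and reserving the genuine work for the converse.

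For the forward implication, suppose $\nu = \mathfrak C_{\mathcal N}\mu$ is an $\mathfrak A$-valued measure. Then Proposition~\ref{p_mlike} applies to $\nu$ and shows that $\nu$ is measure-like, and since $\mathcal N = \mathcal N_\nu$ by Theorem~\ref{p_Ncompl}, the first assertion of Proposition~\ref{p_measurelike} yields at once that $\mathcal N$ is $\mu$-compatible. I expect this direction to occupy a single sentence. For the converse, assume $\mathcal N$ is $\mu$-compatible. First I would record that $\mu$ is measure-like by Proposition~\ref{p_mlike}, so that $D_\mu$ is a $\delta$-ring, and then invoke the second assertion of Proposition~\ref{p_measurelike} to conclude that $\nu$ is measure-like as well; in particular $D_\nu$ is a $\delta$-ring. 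It then remains only to prove that $\nu$ is $\sigma$-additive, after which $\nu$ is an $\mathfrak A$-valued measure by definition.

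The tempting shortcut here --- deducing $\sigma$-additivity from Theorem~\ref{p_scontent} --- is unavailable, because a $\mu$-compatible completion ring need not be $\mu$-regular: the two-point example following Proposition~\ref{p_reg} is readily checked to be $\mu$-compatible while failing $\mu$-regularity, so the trimming furnished by Proposition~\ref{p_reg} cannot be used. Instead I would verify $\sigma$-additivity directly. Given $A\in D_\nu$ and a countable partition $\mb A\colon I\to D_\nu$ of $A$, choose $B_A\in D_\mu$ with $A\Delta B_A\in\mathcal N$, so that $\nu(A)=\mu(B_A)$, and use Lemma~\ref{l_part} to obtain a disjoint family $\mb B\colon I\to D_\mu$ with $\mb A(i)\Delta\mb B(i)\in\mathcal N$, so that $\nu(\mb A(i))=\mu(\mb B(i))$. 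The difficulty is that $B=\bigcup_i\mb B(i)$ need not lie in $D_\mu$; the remedy is truncation. Setting $\mb B'(i)=\mb B(i)\cap B_A$ and $B'=\bigcup_i\mb B'(i)=B\cap B_A$, all the sets $\mb B'(i)$ are contained in $B_A\in D_\mu$, so Lemma~\ref{ll1}(i) gives $B'\in D_\mu$, and the $\sigma$-additivity of $\mu$ yields $\mu(B')=\sum_i\mu(\mb B'(i))$ with the family summable.

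It then remains to compare the truncated quantities with the original ones, and this is exactly where $\mu$-compatibility is indispensable. For each $i$, the set $\mb B(i)\setminus B_A$ lies in $D_\mu$ and is contained in $(A\Delta B_A)\cup(\mb A(i)\Delta\mb B(i))\in\mathcal N$, hence lies in $\mathcal N_\mu$ by Lemma~\ref{l_cring}, whence $\mu(\mb B'(i))=\mu(\mb B(i))=\nu(\mb A(i))$. For the total mass, I would observe that $B_A\setminus B'=B_A\setminus B$ lies in $D_\mu$ and, using~(\ref{0}), is contained in $(A\Delta B_A)\cup\bigcup_i(\mb A(i)\Delta\mb B(i))\in\sigma(\mathcal N)$; Lemma~\ref{l_NcapB} then places $B_A\setminus B$ in $\sigma(\mathcal N)\cap\hat D_\mu$, and the hypothesis $\sigma(\mathcal N)\cap\hat D_\mu\subset\mathcal N$ forces $B_A\setminus B\in\mathcal N_\mu$, giving $\mu(B')=\mu(B_A)=\nu(A)$. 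Combining these identities yields $\nu(A)=\sum_i\nu(\mb A(i))$ with the required summability. The main obstacle, and the precise point where compatibility rather than mere regularity is needed, is this control of the countable union $\bigcup_i(\mb A(i)\Delta\mb B(i))$, which a priori lies only in $\sigma(\mathcal N)$: truncating by $B_A$ pulls the relevant error set back into $\hat D_\mu$, the one place where $\mu$-compatibility can be brought to bear.
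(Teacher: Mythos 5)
Your proposal is correct and takes essentially the same route as the paper's proof: the forward direction via Propositions~\ref{p_mlike} and~\ref{p_measurelike}, and the converse via Lemma~\ref{l_part}, truncation by a $D_\mu$-representative of $A$, Lemma~\ref{ll1}(i), and control of the error set in $\sigma(\mathcal N)$ through Lemma~\ref{l_NcapB}. The only cosmetic difference is bookkeeping: the paper shows $A\Delta B\in\mathcal N$ using the measure-likeness of $\nu$ (i.e., $\sigma(\mathcal N)\cap D_\nu\subset \mathcal N$), whereas you place the error set $B_A\setminus B$ in $\mathcal N_\mu$ inside $D_\mu$ by invoking $\mu$-compatibility directly --- the same estimate arranged slightly differently.
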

	\begin{proof}
		Let $\nu = \mathfrak C_{\mathcal N}\mu$. If $\nu$ is an $\mathfrak A$-valued measure, then $\nu$ is measure-like by Proposition~\ref{p_mlike} and, therefore, $\mathcal N$ is $\mu$-compatible by Proposition~\ref{p_measurelike}. Let $\mathcal N$ be $\mu$-compatible. Then $\nu$ is measure-like by Proposition~\ref{p_measurelike} and, hence, $D_\nu$ is a $\delta$-ring. It remains to show that $\nu$ is $\sigma$-additive. Let $A\in D_\nu$ and $\mb A\colon I\to D_\nu$ be a countable partition of $A$. By Lemma~\ref{l_part}, there is a disjoint family $\mb C\colon I\to D_\mu$ such that $\mb A(i)\Delta\mb C(i)\in\mathcal N$ for every $i\in I$. Let $C\in D_\mu$ be such that $A\Delta C\in\mathcal N$. 
		Let $\fm{\mb B}{I}$ be such that $\mb B(i) = C\cap \mb C(i)$ for every $i\in I$. Since
		\[
		\mb C(i)\setminus \mb B(i) = \mb C(i)\setminus C\subset (\mb C(i)\setminus A)\cup (A\Delta C)\subset (\mb A(i)\Delta \mb C(i))\cup (A\Delta C),
		\]
		it follows from~(\ref{trans}) that 
		\[
		\mb A(i)\Delta \mb B(i)\subset (\mb A(i)\Delta\mb C(i))\cup(\mb C(i)\Delta \mb B(i))\subset (\mb A(i)\Delta \mb C(i))\cup (A\Delta C)
		\]
		for every $i\in I$. As $\mb A(i)\Delta \mb B(i)\in D_\nu$, Lemma~\ref{l_cring} implies that $\mb A(i)\Delta \mb B(i)\in\mathcal N$ for every $i\in I$. It follows from Lemma~\ref{ll1}(i) that the set $B = \bigcup_{i\in I}\mb B(i)$ belongs to $D_\mu$ because $\mb B(i)\subset C$ for every $i\in I$. 
		Let $N = A\Delta B$. Then $N\in D_\nu$ and it follows from~(\ref{0}) that $N\subset N'$, where $N' = \bigcup_{i\in I}\mb A(i)\Delta\mb B(i)$. Since $N'\in\sigma(\mathcal N)$, Lemmas~\ref{l_NcapB} and~\ref{l_cring} imply that $N\in\sigma(\mathcal N)$. As $\mathcal N_\nu = \mathcal N$ by Theorem~\ref{p_Ncompl} and $\nu$ is measure-like, we conclude that $N\in\mathcal N$. Thus, $\nu(A) = \mu(B)$ and $\nu(\mb A(i)) = \mu(\mb B(i))$ for every $i\in I$. Since $\mu(B) = \sum_{i\in I}\mu(\mb B(i))$ by the $\sigma$-additivity of $\mu$, we conclude that $\nu(A) = \sum_{i\in I}\nu(\mb A(i))$.  
	\end{proof}

	\begin{corollary}
		Let $\mathfrak A$ be a HTAG. If $\mu$ is an $\mathfrak A$-valued measure, then so are $\mathfrak C\mu$, $\mathfrak S\mu$, and $\overline{\mathfrak C}\mu$.
	\end{corollary}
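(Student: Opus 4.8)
The plan is to assemble the corollary directly from the three main results of this section, observing that $\mathfrak C\mu$, $\mathfrak S\mu$, and $\overline{\mathfrak C}\mu$ are by construction $\mathcal N$-completions of $\mu$ for the three distinguished completion rings $\hat{\mathcal N}_\mu$, $\mathcal N_0[\mu]$, and $\mathcal N[\mu]$, and that Theorem~\ref{t_meas} reduces the measure property of such a completion to the $\mu$-compatibility of the ring. So the whole argument is a matter of checking that each of these three rings is $\mu$-compatible and then invoking the criterion.

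First I would use Proposition~\ref{p_mlike} to record that the $\mathfrak A$-valued measure $\mu$ is measure-like; this is the bridge that transfers the purely set-theoretic machinery developed for measure-like maps to the topologically defined object $\mu$. With $\mu$ now known to be measure-like, Corollary~\ref{c_mucomp} applies verbatim and yields that the three sets $\hat{\mathcal N}_\mu$, $\mathcal N_0[\mu]$, and $\mathcal N[\mu]$ are all $\mu$-compatible. These are precisely the completion rings whose $\mathcal N$-completions define $\mathfrak C\mu$, $\mathfrak S\mu$, and $\overline{\mathfrak C}\mu$ (as fixed in Section~\ref{s4}), so the $\mu$-compatibility hypothesis of Theorem~\ref{t_meas} is met in each case.

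Finally I would feed each of these three $\mu$-compatible completion rings into the ``if'' direction of Theorem~\ref{t_meas}, which immediately gives that $\mathfrak C_{\hat{\mathcal N}_\mu}\mu = \mathfrak C\mu$, $\mathfrak C_{\mathcal N_0[\mu]}\mu = \mathfrak S\mu$, and $\mathfrak C_{\mathcal N[\mu]}\mu = \overline{\mathfrak C}\mu$ are $\mathfrak A$-valued measures. There is no genuine obstacle here: the statement is a formal corollary requiring no new estimates or constructions. The only point deserving care is the bookkeeping that identifies the natural completions with the $\mathcal N$-completions for the three rings and confirms that each is in fact a completion ring for $\mu$, but both facts were already established in Section~\ref{s4}, so the proof reduces to citing Proposition~\ref{p_mlike}, Corollary~\ref{c_mucomp}, and Theorem~\ref{t_meas} in succession.
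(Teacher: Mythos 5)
Your proposal is correct and matches the paper's own proof exactly: the paper derives the corollary by citing Proposition~\ref{p_mlike} (an $\mathfrak A$-valued measure is measure-like), Corollary~\ref{c_mucomp} ($\hat{\mathcal N}_\mu$, $\mathcal N_0[\mu]$, and $\mathcal N[\mu]$ are then $\mu$-compatible), and Theorem~\ref{t_meas}, precisely the chain you assembled. Your additional remark that these three sets were verified in Section~\ref{s4} to be completion rings for $\mu$ is sound bookkeeping but adds nothing beyond what the paper's citation chain already presupposes.
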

	\begin{proof}
		The statement holds by Corollary~\ref{c_mucomp}, Proposition~\ref{p_mlike}, and Theorem~\ref{t_meas}. 
	\end{proof}

	Let $\mu$ be an $\mathfrak A$-valued measure. Theorem~\ref{t_meas} implies, in particular, that $\mathfrak C_{\mathcal N}\mu$ is an $\mathfrak A$-valued measure whenever $\mathcal N$ is a completion ring for $\mu$ and a $\sigma$-ring. It should be noted that this is not true for $\sigma$-contents: Example~\ref{e5} shows that the completion of a $\sigma$-content with respect to a $\sigma$-ring may fail to be $\sigma$-additive.  
	
	Let $\mu$ be a measure-like map such that $D_\mu$ is a $\sigma$-ring. Then $\sigma(\mathcal N_\mu) \subset D_\mu$ and it follows immediately from Definition~\ref{d_measurelike} that $\mathcal N_\mu$ is a $\sigma$-ring. 
	
	\begin{proposition}\label{p_sigma}
		Let $\mathfrak A$ be a HTAG and $\mu$ be an $\mathfrak A$-valued measure such that $D_\mu$ is a $\sigma$-ring. 
		\begin{enumerate}
			\item[(i)] Let $\mathcal N$ be a completion ring for $\mu$ and $\nu = \mathfrak C_{\mathcal N}\mu$. Then $\mathcal N$ is a $\sigma$-ring if and only if $\nu$ is an $\mathfrak A$-valued measure and $D_\nu$ is a $\sigma$-ring.
			\item[(ii)] $\mathfrak C\mu$, $\mathfrak S\mu$, and $\overline{\mathfrak C}\mu$ are $\mathfrak A$-valued measures whose domains are $\sigma$-rings.
		\end{enumerate}
	\end{proposition}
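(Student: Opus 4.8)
The plan is to reduce both parts to the machinery already developed for measure-like maps together with the criterion of Theorem~\ref{t_meas}, treating the two implications of part~(i) separately. For the forward implication, assume $\mathcal N$ is a $\sigma$-ring. Then $\sigma(\mathcal N) = \mathcal N$, so the inclusion $\sigma(\mathcal N)\cap\hat D_\mu\subset\mathcal N$ holds trivially, and since every $\sigma$-ring is a $\delta$-ring, $\mathcal N$ is $\mu$-compatible by Definition~\ref{d_compat}. As $\mu$ is an $\mathfrak A$-valued measure, Theorem~\ref{t_meas} then gives that $\nu$ is an $\mathfrak A$-valued measure. That $D_\nu$ is a $\sigma$-ring I would read off from Corollary~\ref{c_sigma}: its hypotheses (namely, $\mathcal N$ a $\sigma$-ring and $D_\mu$ a $\delta$-ring) are met, and its last sentence yields precisely that $D_\nu$ is a $\sigma$-ring whenever $D_\mu$ is.

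For the converse, assume $\nu$ is an $\mathfrak A$-valued measure with $D_\nu$ a $\sigma$-ring. By Proposition~\ref{p_mlike}, $\nu$ is measure-like. Applying the observation recorded just before the statement---that a measure-like map with a $\sigma$-ring as its domain has a $\sigma$-ring as its collection of null sets---to $\nu$, I conclude that $\mathcal N_\nu$ is a $\sigma$-ring; since $\mathcal N_\nu = \mathcal N$ by Theorem~\ref{p_Ncompl}, this is exactly the desired conclusion, completing part~(i).

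For part~(ii), the strategy is to invoke part~(i) for each of the three natural completion rings, so the only thing requiring verification is that $\hat{\mathcal N}_\mu$, $\mathcal N_0[\mu]$, and $\mathcal N[\mu]$ are $\sigma$-rings. Here $\mu$ is measure-like (Proposition~\ref{p_mlike}) with $D_\mu$ a $\sigma$-ring, so the same observation shows $\mathcal N_\mu$ is a $\sigma$-ring. Each of the three sets is already known to be a ring, so it suffices to check closure under countable unions. For $\hat{\mathcal N}_\mu$, a countable union of sets each contained in some element of $\mathcal N_\mu$ is contained in the union of those elements, which lies in $\mathcal N_\mu$ since the latter is a $\sigma$-ring. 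For $\mathcal N_0[\mu]$ and $\mathcal N[\mu]$, I would use that intersecting with a fixed $B\in D_\mu$ commutes with countable unions, reducing closure to the $\sigma$-ring property of $\mathcal N_\mu$ and of $\hat{\mathcal N}_\mu$ respectively, via definitions~(\ref{N0mu}) and~(\ref{Nmu}). Part~(i) then immediately yields that $\mathfrak C\mu$, $\mathfrak S\mu$, and $\overline{\mathfrak C}\mu$ are $\mathfrak A$-valued measures whose domains are $\sigma$-rings.

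The argument is essentially bookkeeping, and I do not expect a genuine obstacle, since $\sigma$-additivity and the $\delta$-ring structure have already been secured by Theorem~\ref{t_meas} and Corollary~\ref{c_sigma}. The only points demanding attention are the two dual observations that make the reduction work: that a $\sigma$-ring completion ring is automatically $\mu$-compatible, so that Theorem~\ref{t_meas} applies in the forward direction, and that the converse hinges on the characterization of the null sets of a measure-like map whose domain is a $\sigma$-ring.
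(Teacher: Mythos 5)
Your proposal is correct and follows essentially the same route as the paper: the forward direction of (i) via Theorem~\ref{t_meas} (with the $\sigma$-ring case of $\mu$-compatibility made explicit) and Corollary~\ref{c_sigma}, the converse via Proposition~\ref{p_mlike}, the observation that a measure-like map with $\sigma$-ring domain has a $\sigma$-ring of null sets, and $\mathcal N = \mathcal N_\nu$ from Theorem~\ref{p_Ncompl}. Part~(ii) likewise matches the paper's argument, merely spelling out the routine verification that $\hat{\mathcal N}_\mu$, $\mathcal N_0[\mu]$, and $\mathcal N[\mu]$ are $\sigma$-rings before invoking~(i).
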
 
	\begin{proof}
		(i) If $\mathcal N$ is a $\sigma$-ring, then $\nu$ is an $\mathfrak A$-valued measure by Theorem~\ref{t_meas} and $D_\nu$ is a $\sigma$-ring by Corollary~\ref{c_sigma}. If $\nu$ is an $\mathfrak A$-valued measure and $D_\nu$ is a $\sigma$-ring, then $\nu$ is measure-like by Proposition~\ref{p_mlike} and, hence, $\mathcal N_\nu$ is a $\sigma$-ring. It remains to note that $\mathcal N = \mathcal N_\nu$ by Theorem~\ref{p_Ncompl}. 
		\par\medskip\noindent
		(ii) By Proposition~\ref{p_mlike}, $\mu$ is measure-like and, hence, $\mathcal N_\mu$ is a $\sigma$-ring. This implies that $\hat{\mathcal N}_\mu$ is a $\sigma$-ring and, hence, $\mathcal N_0[\mu]$ and $\mathcal N[\mu]$ are $\sigma$-rings by~(\ref{N0mu}) and~(\ref{Nmu}). The statement therefore follows from~(i).
	\end{proof}
	
	If $\mu$ is a positive content (but not a measure) such that $D_\mu$ is a $\sigma$-ring, then the domain of $\mathfrak C\mu$ may fail to be even a $\delta$-ring (see Example~\ref{e3}). 
	
	We conclude this section by giving a sufficient condition ensuring the coincidence of $\overline{\mathfrak C}\mu$ and $\mathfrak C\mathfrak S\mu$.

	\begin{definition}\label{d_decomp}
		A map $\mu$ is called decomposable if $\varnothing\in D_\mu$ and there is a partition $\mb B\colon I\to D_\mu$ of $S[\mu]$ such that
		\begin{enumerate}
			\item[(a)] $D_\mu = \{B\in \hat D_\mu: B\cap\mb B(i)\in D_\mu\mbox{ for every }i\in I\}$,
			\item[(b)] for every $B\in D_\mu$, there is a countable set $J\subset I$ such that the set $B\setminus\bigcup_{i\in J}\mb B(i)$ belongs to $\mathcal N_\mu$. 
		\end{enumerate}
	\end{definition}

	\begin{proposition}
		If $\mu$ is a decomposable measure-like map, then $\overline{\mathfrak C}\mu = \mathfrak C\mathfrak S\mu$.
	\end{proposition}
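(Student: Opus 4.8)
The plan is to reduce the assertion to a purely set-theoretic identity and then exploit the decomposition. By Proposition~\ref{p_nat}, the equality $\overline{\mathfrak C}\mu = \mathfrak C\mathfrak S\mu$ holds if and only if $\mathcal N[\mu] = \widehat{\mathcal N_0[\mu]}$, so it suffices to establish this set equality. The inclusion $\widehat{\mathcal N_0[\mu]}\subset\mathcal N[\mu]$ requires no hypotheses: since $\mathcal N_\mu\subset\hat{\mathcal N}_\mu$ we have $\mathcal N_0[\mu]\subset\mathcal N[\mu]$, and $\mathcal N[\mu]$ is hereditary, whence $\widehat{\mathcal N_0[\mu]}\subset\widehat{\mathcal N[\mu]} = \mathcal N[\mu]$. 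All the work goes into the reverse inclusion $\mathcal N[\mu]\subset\widehat{\mathcal N_0[\mu]}$, i.e., into showing that every locally $\mu$-negligible set is contained in a locally $\mu$-null set.

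For this reverse inclusion I would fix $N\in\mathcal N[\mu]$ and let $\mb B\colon I\to D_\mu$ be a partition of $S[\mu]$ witnessing the decomposability of $\mu$. For each $i\in I$ the set $N\cap\mb B(i)$ lies in $\hat{\mathcal N}_\mu$, so I choose $M_i\in\mathcal N_\mu$ with $N\cap\mb B(i)\subset M_i$ and, replacing $M_i$ by $M_i\cap\mb B(i)$, arrange that $M_i\subset\mb B(i)$. Setting $N' = \bigcup_{i\in I}M_i$, I immediately get $N'\subset S[\mu]$ and, since $N = \bigcup_{i\in I}(N\cap\mb B(i))$, the inclusion $N\subset N'$. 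It then remains to verify that $N'\in\mathcal N_0[\mu]$, that is, $N'\cap B\in\mathcal N_\mu$ for every $B\in D_\mu$.

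The crux is to show $N'\cap B\in D_\mu$ despite $N'$ being a possibly uncountable union. Here property~(a) of decomposability is decisive: because the $M_i$ are pairwise disjoint and $M_i\subset\mb B(i)$, one has $N'\cap\mb B(i) = M_i$, hence $(N'\cap B)\cap\mb B(i) = M_i\cap B\in D_\mu$ for every $i$; since also $N'\cap B\subset B\in D_\mu$, property~(a) yields $N'\cap B\in D_\mu$. To see that $N'\cap B$ is actually null, I would invoke property~(b) to produce a countable $J\subset I$ with $B_0 = B\setminus\bigcup_{i\in J}\mb B(i)\in\mathcal N_\mu$; then for $i\notin J$ the set $M_i\cap B$ is contained in $B_0$, so $N'\cap B\subset P$, where $P = \bigcup_{i\in J}(M_i\cap B)\cup B_0$ is a countable union of elements of $\mathcal N_\mu$ contained in $B$. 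Thus $P\in\sigma(\mathcal N_\mu)\cap\hat D_\mu$, and the measure-likeness of $\mu$ in the form~(\ref{mlike}) gives $P\in\mathcal N_\mu$. Finally, $N'\cap B\in D_\mu$ together with $N'\cap B\subset P\in\mathcal N_\mu$ and the identity $\hat{\mathcal N}_\mu\cap D_\mu = \mathcal N_\mu$ forces $N'\cap B\in\mathcal N_\mu$, completing the argument.

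The main obstacle is precisely the $D_\mu$-membership of $N'\cap B$, and both defining properties of decomposability are needed to overcome it: property~(a) to localize the uncountable union fibrewise over the $\mb B(i)$, and property~(b) together with~(\ref{mlike}) to bound $N'\cap B$ by a genuine null set. I expect the verification that $M_i\cap\mb B(i)$ may be used in place of $M_i$ and the disjointness bookkeeping leading to $N'\cap\mb B(i)=M_i$ to be routine, so the write-up should be short once the roles of~(a) and~(b) are made explicit.
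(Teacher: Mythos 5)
Your proposal is correct and follows essentially the same route as the paper's proof: reduce via Proposition~\ref{p_nat} to the inclusion $\mathcal N[\mu]\subset\widehat{\mathcal N_0[\mu]}$, cover $N$ by null sets $\mb N(i)$ with $N\cap\mb B(i)\subset\mb N(i)\subset\mb B(i)$, use property~(a) to place $N'\cap B$ in $D_\mu$, and use property~(b) together with~(\ref{mlike}) to conclude $N'\cap B\in\mathcal N_\mu$. The only (cosmetic) difference is that you apply~(b) to $B$ itself and bound $N'\cap B$ by the null set $P$, whereas the paper applies~(b) to $B'=N'\cap B$ and writes $B' = C\cup(B'\setminus C)$; both variants are equally valid.
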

	\begin{proof}
		In view of Proposition~\ref{p_nat}, it suffices to show that $\mathcal N[\mu] \subset \widehat{\mathcal N_0[\mu]}$ (the opposite inclusion being obvious). Let $N\in \mathcal N[\mu]$ and $\mb B\colon I\to D_\mu$ be as in Definition~\ref{d_decomp}. Since $N\cap\mb B(i)\in \hat{\mathcal N}_\mu$, there is $\mb N\colon I\to \mathcal N_\mu$ such that 
		\[
		N\cap\mb B(i)\subset \mb N(i) \subset \mb B(i) 
		\]
		for every $i\in I$. We claim that the set $N' = \bigcup_{i\in I}\mb N(i)$ belongs to $\mathcal N_0[\mu]$. Let $B\in D_\mu$ and $B' = N'\cap B$. For every $i\in I$, we have $B'\cap\mb B(i) = B\cap \mb N(i)$ and, hence, $B'\cap\mb B(i)\in \mathcal N_\mu$. By Definition~\ref{d_decomp}(a), we conclude that $B'\in D_\mu$. In view of Definition~\ref{d_decomp}(b), there is a countable $J\subset I$ such that $B'\setminus C\in\mathcal N_\mu$, where $C = \bigcup_{i\in J}(B'\cap\mb B(i))$. Since $C\in \hat D_\mu$ and $C\in\sigma(\mathcal N_\mu)$, we have $C\in\mathcal N_\mu$ by~(\ref{mlike}). As $B' = C\cup (B'\setminus C)$, it follows that $B'\in \mathcal N_\mu$ and, therefore, $N'\in \mathcal N_0[\mu]$. This implies that $N\in \widehat{\mathcal N_0[\mu]}$ because $N\subset N'$. Thus, $\mathcal N[\mu] \subset \widehat{\mathcal N_0[\mu]}$. 
	\end{proof}

	In particular, the equality $\overline{\mathfrak C}\mu = \mathfrak C\mathfrak S\mu$ holds for measure-like maps that are $\sigma$-finite in the sense that $S[\mu]$ is a countable union of elements of $D_\mu$. 

	In the case of positive measures, our definition of decomposability is equivalent to that in~\cite{Fremlin1978,FremlinV2} and is somewhat less restrictive than the notion of the direct sum of measure spaces in~\cite{Segal}. It was shown by Fremlin~\cite[Example~8]{Fremlin1978} that positive measures that are localizable in the sense of~\cite{Segal} are not necessarily decomposable, and it seems plausible that $\overline{\mathfrak C}\mu$ and $\mathfrak C\mathfrak S\mu$ may be different for localizable $\mu$. However, we do not know such examples (the measure $\mu$ with $\overline{\mathfrak C}\mu \neq \mathfrak C\mathfrak S\mu$ constructed in Section~\ref{s7} is not localizable).

\section{Examples}
\label{s7}

Let $A$ be a set of pairs. For every $x$, we define the horizontal $x$-section $A_{[x]}$ of $A$ and the vertical $x$-section $A^{[x]}$ of $A$ by the relations $A_{[x]} = \{y : (y,x)\in A\}$ and $A^{[x]} = \{y : (x,y)\in A\}$. We set $\mathrm{Pr}_1 A = \{x: A^{[x]}\neq\varnothing\}$ and $\mathrm{Pr}_2 A = \{x: A_{[x]}\neq\varnothing\}$. 

Given a set $I$, we denote its cardinality by $|I|$.

\begin{lemma}\label{l_full}
	Let $I$ and $J$ be uncountable sets with $|I|<|J|$ and $A\subset I\times J$ be such that $A_{[j]}$ is countable for every $j\in J$. Then $J\setminus A^{[i]}$ is uncountable for some $i\in I$.
\end{lemma}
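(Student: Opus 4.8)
The plan is to argue by contradiction, turning the conclusion into a statement about how the ``holes'' of the vertical sections fail to cover $J$. So I would assume that $J\setminus A^{[i]}$ is countable for \emph{every} $i\in I$, and aim to contradict the standing hypothesis that every horizontal section $A_{[j]}$ is countable.

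First I would collect all the missing points into a single set, defining $B=\bigcup_{i\in I}(J\setminus A^{[i]})$, the set of those $j\in J$ that are omitted by at least one vertical section. Under the contradiction hypothesis each $J\setminus A^{[i]}$ is countable, so $B$ is a union of $|I|$ countable sets. A routine cardinal computation then gives $|B|\le |I|\cdot\aleph_0=|I|$, using that $I$ is infinite. Since we are assuming $|I|<|J|$, this forces $|B|<|J|$, so $B$ is a proper subset of $J$ and I may choose a point $j^\ast\in J\setminus B$.

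Next I would unwind the definition of $B$ at the point $j^\ast$: because $j^\ast$ lies in no set $J\setminus A^{[i]}$, it must lie in $A^{[i]}$ for every $i\in I$, that is, $(i,j^\ast)\in A$ for all $i$. Reading this through the definition $A_{[j^\ast]}=\{y:(y,j^\ast)\in A\}$, I conclude that $A_{[j^\ast]}$ contains every element of $I$ and hence equals $I$, which is uncountable. This contradicts the hypothesis that all horizontal sections are countable, completing the argument.

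The step that requires the most care is choosing which quantity to count. The tempting double-counting of the size of $A$ via $|A|=\sum_{j\in J}|A_{[j]}|\le|J|$ on one side and $|A|=\sum_{i\in I}|A^{[i]}|$ on the other is inconclusive, since under the contradiction hypothesis each $A^{[i]}$ has cardinality $|J|$ and $|I|\cdot|J|=|J|$, so the crucial assumption $|I|<|J|$ never gets used. The decisive idea is instead to count the complements: covering $J$ by the countably small sets $J\setminus A^{[i]}$ is exactly where the strict inequality $|I|<|J|$ enters, guaranteeing that these holes cannot exhaust $J$ and thereby producing the point $j^\ast$ that forces a full, uncountable horizontal section.
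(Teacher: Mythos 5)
Your proof is correct and matches the paper's argument: the paper likewise assumes each $J\setminus A^{[i]}$ countable, notes that $K=\bigcap_{i\in I}A^{[i]}=J\setminus\bigcup_{i\in I}(J\setminus A^{[i]})$ is nonempty because $|I|<|J|$ (exactly your cardinality estimate $|I|\cdot\aleph_0=|I|$, left implicit there), and derives the contradiction $A_{[j]}=I$ for $j\in K$. Your choice of $j^\ast\in J\setminus B$ is just the paper's choice of a point of $K$, so the two arguments are the same.
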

\begin{proof}
	Suppose $J\setminus A^{[i]}$ is countable for every $i\in I$. Let $K = \bigcap_{i\in I}A^{[i]}$. Since $|I|<|J|$, the equality $K = J\setminus\bigcup_{i\in I}(J\setminus A^{[i]})$ implies that $K\neq\varnothing$. This contradicts the hypothesis because the set $A_{[j]}$ is equal to $I$ and, hence, is uncountable for every $j\in K$.
\end{proof}

Let $I$ be an uncountable set. We define 
\[
\mathcal R_I = \{A\subset I: \mbox{either $A$ or $I\setminus A$ is countable}\}
\]
and let $\fm{\eta_I}{\mathcal R_I}$ be such that, for every $A\in\mathcal R_I$, we have $\eta_I(A) = 0$ if $A$ is countable and $\eta_I(A) = 1$ if $I\setminus A$ is countable. Clearly, $\eta_I$ is a positive measure.

\begin{example}\label{e1}
We construct a positive measure $\mu$ such that $\overline{\mathfrak C}\mu\neq \mathfrak C\mathfrak S\mu$. 

Let $I$ and $J$ be uncountable sets such that $|I|<|J|$. Let the map $\lambda$ be such that
\begin{align}
& D_\lambda = \{B\subset I\times J: \mathrm{Pr}_2 B\mbox{ is finite and }B_{[j]}\in\mathcal R_I\mbox{ for every }j\in J\},\nonumber\\
& \lambda(B) = \sum_{j\in \mathrm{Pr}_2 B} \eta_I(B_{[j]}),\quad B\in D_\lambda.\nonumber
\end{align}
Then $\lambda$ is a positive measure. The set  
\[
\mathcal N = \{N\subset I\times J: \mathrm{Pr}_1 N\mbox{ is countable and }N^{[i]}\in\mathcal R_J\mbox{ for every }i\in I\}
\] 
is obviously a $\sigma$-ring. Since $\mathcal N_\lambda$ consists of all $N\subset I\times J$ such that $\mathrm{Pr}_1 N$ is countable and $\mathrm{Pr}_2 N$ is finite, we have $\mathcal N\cap D_\lambda = \mathcal N_\lambda$ and $N\cap B\in \mathcal N_\lambda$ for every $N\in\mathcal N$ and $B\in D_\lambda$. Hence, $\mathcal N$ is a completion ring for $\lambda$ and $\mathcal N\subset \mathcal N_0[\lambda]$. Theorem~\ref{t_meas} implies that $\mu = \mathfrak C_{\mathcal N}\lambda$ is a positive measure. We claim that $\mathcal N[\mu] \neq \widehat{\mathcal N_0[\mu]}$ and, therefore, $\overline{\mathfrak C}\mu \neq \mathfrak C\mathfrak S\mu$ by Proposition~\ref{p_nat}. As $\lambda$ is complete, we have $\mathcal N[\lambda] = \mathcal N_0[\lambda]$ and Lemma~\ref{l_compl1} implies that
\begin{align}
& \mathcal N[\mu] = \mathcal N[\lambda] = \{N\subset I\times J: N_{[j]}\mbox{ is countable for every }j\in J\},\nonumber\\
& \mathcal N_0[\mu] = \{N\subset I\times J: N^{[i]}\in \mathcal R_J \mbox{ and } N_{[j]}\mbox{ is countable for every }i\in I\mbox{ and }j\in J\}.\nonumber
\end{align}

Let $\fm{\mb N}{I}$ be a disjoint family such that $\mb N(i)\subset J$ and $|\mb N(i)| = |I|$ for every $i\in I$ (such a family exists because $|I|<|J|$ and $|I\times I| = |I|$). Let $N = \bigcup_{i\in I} \{i\}\times \mb N(i)$. Suppose $N\subset\tilde N$, where $\tilde N\in\mathcal N_0[\mu]$. For every $i\in I$, we have $N^{[i]} = \mb N(i)$ and, hence, $N^{[i]}$ is uncountable. This implies that $J\setminus\tilde N^{[i]}$ is countable for every $i\in I$, in contradiction to Lemma~\ref{l_full}. Thus, $N\notin \widehat{\mathcal N_0[\mu]}$. On the other hand, $N\in \mathcal N[\mu]$ because $N_{[j]}$ has at most one element for every $j\in J$. Hence, $\mathcal N[\mu] \neq \widehat{\mathcal N_0[\mu]}$. 

The measure $\mu$ constructed above is similar to that given by Sz\H{u}cs~\cite{Szucs} as an example of a measure that is not localizable in the sense of~\cite{Segal} but has a localizable completion. In the same way as in~\cite{Szucs}, it can be shown that $\mu$ is not localizable.     
\end{example}

\begin{example}\label{e2}
Halmos~\cite[p.~131]{Halmos} gave an example demonstrating that the Radon--Nikodym theorem may break down for non-$\sigma$-finite positive measures. We show that the measures used in~\cite{Halmos} can be represented as suitable $\mathcal N$-completions and rederive the argument for the failure of the Radon--Nikodym theorem in this setting. 

	Let $I$, $J$, $\mathcal N$, $\lambda$, and $\mu$ be as in Example~\ref{e1} and let  
	\[
	\mathcal N' = \{N\subset I\times J: \mathrm{Pr}_2 N\mbox{ is countable and }N_{[j]}\in\mathcal R_I\mbox{ for every }j\in J\}.
	\]
	We define the positive measure $\lambda'$ by setting
	\begin{align}
	& D_{\lambda'} = \{B\subset I\times J: \mathrm{Pr}_1 B\mbox{ is finite and }B^{[i]}\in\mathcal R_J\mbox{ for every }i\in I\},\nonumber\\
	& \lambda'(B) = \sum_{i\in \mathrm{Pr}_1 B} \eta_J(B^{[i]}),\quad B\in D_{\lambda'}.\nonumber
	\end{align} 
	Proceeding as in Example~\ref{e1}, we make sure that $\mathcal N'$ is a completion ring for $\lambda'$.
	As $\mathcal N'$ is a $\sigma$-ring, Theorem~\ref{t_meas} implies that $\mu' = \mathfrak C_{\mathcal N'}\lambda'$ is a positive measure.
	
	Let $\mathcal Q = \Delta(\mathcal N,\mathcal N')$. It is easy to see that $\mathcal Q$ is a $\sigma$-ring and, hence, $\sigma(D_\mu) = \sigma(D_{\mu'}) = \mathcal Q$. As $(N\Delta N')^{[i]} = N^{[i]}\Delta N^{\prime[i]}$ and $(N\Delta N')_{[j]} = N_{[j]}\Delta N'_{[j]}$, we have
	\begin{equation}\label{R}
	\mbox{$B^{[i]}\in \mathcal R_J$ and $B_{[j]}\in\mathcal R_I$ for every $B\in\mathcal Q$, $i\in I$, and $j\in J$.}
	\end{equation}
	
	Let $\mathcal K = D_\mu\cap D_{\mu'}$ and $\fm{\nu}{\mathcal K}$ be such that $\nu(B) = \mu(B) + \mu'(B)$ for every $B\in\mathcal K$. Then $\mathcal K$ is a $\delta$-ring and $\nu$ is a positive measure. 
	Since $D_\lambda\subset \mathcal N'$ and $D_{\lambda'}\subset \mathcal N$, we have $D_\lambda\cup D_{\lambda'}\subset \mathcal K \subset \mathcal Q$ and, hence, $\sigma(\mathcal K) = \mathcal Q$. Note that $\nu$ is complete because $\mathcal N_\nu = \mathcal N\cap\mathcal N'$ is the set of all countable subsets of $I\times J$.
	
	Clearly, $\mu$ is absolutely continuous with respect to $\nu$ in the sense that $\mathcal N_\nu\subset\mathcal N_\mu$ (and, therefore, $\mathcal N[\nu]\subset\mathcal N[\mu]$). We shall show that, nevertheless, $\mu$ has no density with respect to $\nu$:
	\begin{enumerate}
		\item[(S)] There is no function $f\colon I\times J\to[0,\infty)$ such that $f$ is $\nu$-integrable on $B$ and $\mu(B) = \int_B f(s)\,d\nu(s)$ for every $B\in\mathcal K$.
	\end{enumerate}   
	
	Suppose such an $f$ exists. Let $A = \{s\in I\times J: f(s) = 0\}$. If $B\in\mathcal K$, then $B\setminus A\in\mathcal K$ because $f$ is $\nu$-measurable on $B$. Clearly, $\int_B f(s)\,d\nu(s) = 0$ if and only if $\nu(B\setminus A) = 0$. Hence,
	\begin{equation}\label{eq}
	\mu(B) = 0 \Leftrightarrow \nu(B\setminus A) = 0,\quad B\in\mathcal K.
	\end{equation}	

	Let $i\in I$, $B = \{i\}\times J$, and $C = B\setminus A$. Since $C\in \mathcal K$, we have $C^{[i]}\in\mathcal R_J$ by~(\ref{R}). This implies that $C\in D_{\lambda'}$ and, hence, $C\in \mathcal N$. It follows that $\mu(C) = 0$ and
	\[
	\nu(C) = \mu'(C) = \lambda'(C) = \eta_J(C^{[i]}).
	\]
	By~(\ref{eq}), we conclude that $\eta_J(C^{[i]}) = 0$ because $B\in\mathcal N$ and $\mu(B) = 0$. This means that $J\setminus A^{[i]}$ is countable.
	
	Now let $j\in J$ and $B = I\times\{j\}$. Again, we set $C = B\setminus A$. Since $C\in \mathcal K$, we have $C_{[j]}\in\mathcal R_I$ by~(\ref{R}). This implies that $C\in D_\lambda$ and, hence, $C\in \mathcal N'$. It follows that $\mu'(C) = 0$ and
	\[
	\nu(C) = \mu(C) = \lambda(C) = \eta_I(C_{[j]}).
	\]
	By~(\ref{eq}), we conclude that $\eta_I(C_{[j]}) \neq 0$ because $\mu(B) = \lambda(B) = 1$. As $A_{[j]} = I\setminus C_{[j]}$, this means that $A_{[j]}$ is countable.
	
	Thus, $J\setminus A^{[i]}$ is countable for every $i\in I$ and $A_{[j]}$ is countable for every $j\in J$, in contradiction to Lemma~\ref{l_full}. This completes the proof of~(S).
	
	We note that $\mu'$ also has no density with respect to $\nu$ despite being absolutely continuous (if $f$ were such a density, then $1-f$ would be a density for $\mu$).
	
	The treatment in~\cite{Halmos} involves extended real-valued measures defined on the $\sigma$-ring $\mathcal Q$. To pass to our setting, one has to restrict these measures to $\delta$-rings, where they are finite.
\end{example}	

\begin{example}\label{e3}
We construct a positive content $\mu$ such that $D_\mu$ is a $\sigma$-algebra and $D_{\mathfrak C\mu}$ is not a $\delta$-ring. 

Let $\mathcal U$ be a non-principal ultrafilter on an infinite countable set $I$. Let $\mathcal P$ be the set of all subsets of $I$ and $\fm{\lambda}{\mathcal P}$ be such that, for every $C\in\mathcal P$, we have $\lambda(C) = 1$ if $C\in\mathcal U$ and $\lambda(C) = 0$ if $C\notin \mathcal U$. Then $\lambda$ is a positive content. Since $\lambda$ is complete, it cannot serve as the required example. We therefore construct a modification $\mu$ of $\lambda$ as follows. Let $J = \{0,1\}$ and $\mathcal Q$ be the set of all sets of the form $C\times J$, where $C\in \mathcal P$. Clearly, $\mathcal Q$ is a $\sigma$-algebra. Let $\fm{\mu}{\mathcal Q}$ be such that $\mu(C\times J) = \lambda(C)$ for every $C\in\mathcal P$. Then $\mu$ is a positive content and $\hat{\mathcal N}_\mu = \{N\subset I\times J: \lambda(\mathrm{Pr}_1N)=0\}$. Let $\nu = \mathfrak C\mu$. Since $\mathcal U$ is non-principal, we have $\lambda(\{i\}) = 0$ for every $i\in I$. This implies that $\{(i,0)\}\in \hat{\mathcal N}_\mu$ and, hence, $\{(i,0)\}\in D_\nu$ for every $i\in I$. Let $A = I\times \{0\} = \bigcup_{i\in I}\{(i,0)\}$. For every $B\in D_\mu$, we have $\mathrm{Pr}_1 (A\Delta B) = I$ and, therefore, $A\notin D_\nu$. Thus, $D_\nu$ is not closed under countable unions. Since $D_\nu$ is an algebra by Theorem~\ref{p_Ncompl}, it follows that $D_\nu$ is not closed under countable intersections.  
\end{example}

\begin{example}\label{e4}
We construct $\mu$ and $\mathcal N$ with the following properties: $\mu$ is a positive measure, $D_\mu$ is a $\sigma$-algebra, and $\mathcal N$ is a completion ring for $\mu$ and a $\delta$-ring; at the same time, the map $\nu = \mathfrak C_{\mathcal N}\mu$ is not $\sigma$-additive and $D_\nu$ is not a $\delta$-ring.

Let $I$ be an infinite set and the map $\mu$ be such that $D_\mu = \{\{\varnothing\},\{I\}\}$, $\mu(\varnothing) = 0$, and $\mu(I) = 1$. Then $\mu$ is a positive measure and $D_\mu$ is a $\sigma$-algebra. Let $\mathcal N$ be the set of all finite subsets of $I$. Then $\mathcal N$ is obviously a $\delta$-ring and a completion ring for $\mu$. Let $\nu = \mathfrak C_{\mathcal N}\mu$. Then 
$D_\nu = \{A: A\in\mathcal N\mbox{ or }I\setminus A\in\mathcal N\}$, and we have $\nu(A) = 0$ if $A\in\mathcal N$ and $\nu(A)=1$ if $I\setminus A\in\mathcal N$. It is clear that $D_\nu$ is not a $\delta$-ring and $\nu$ is $\sigma$-additive if and only if $I$ is uncountable.
\end{example}

\begin{example}\label{e5}
We construct a positive $\sigma$-content $\mu$ and a completion ring $\mathcal N$ for $\mu$ such that $\mathcal N$ is a $\sigma$-ring and $\mathfrak C_{\mathcal N}\mu$ is not $\sigma$-additive. 

Let $\mathcal Q$ be the set of all finite unions of semi-intervals on $\R$ of the form $(a,b]$, where $a<b$, and let $\mu$ be the restriction of the Lebesgue measure on $\R$ to $\mathcal Q$. Since $\mathcal Q$ is a ring, $\mu$ is a positive $\sigma$-content. Let $\mathcal N$ be the set of all subsets of $\R\setminus\mathbb Q$, where $\mathbb Q$ is the set of rational numbers. Clearly, $\mathcal N$ is a $\sigma$-ring and a completion ring for $\mu$. Let $\nu = \mathfrak C_{\mathcal N}\mu$ and $A = (0,1]\cap\mathbb Q$. Then $A\in D_\nu$ and $\nu(A) = 1$. Since $A$ is countable, there is a bijection $\tau$ of $\N$ onto $A$. Let $\mb A\colon \N\to\mathcal Q$ be such that $\mb A(n) = (\tau(n)-2^{-n-1},\tau(n)]$ for every $n\in\N$. Then $\nu(\mb A(n)) = 2^{-n-1}$ for every $n\in \N$ and, hence, $\sum_{n\in \N}\nu(\mb A(n)) = 1/2 < \nu(A)$. As $A\subset \bigcup_{n\in \N}\mb A(n)$, it follows that $\nu$ is not $\sigma$-subadditive and, therefore, is not $\sigma$-additive.
\end{example}

\section*{Acknowledgements} 
M.S.~Smirnov was supported by the Moscow Center of Fundamental and Applied Mathematics at INM RAS (Agreement with the Ministry of Education and Science of the Russian Federation No.~075-15-2022-286).

\end{document}